\theoremstyle{plain}
\newtheorem{theorem}{Theorem}[section]
\newtheorem{thm}[theorem]{Theorem}
\newtheorem{cor}[theorem]{Corollary}
\newtheorem{prop}[theorem]{Proposition}
\newtheorem{lem}[theorem]{Lemma}
\newtheorem*{theorem*}{Theorem}
\theoremstyle{definition}
\newtheorem{rem}[theorem]{Remark}
\newtheorem{defn}[theorem]{Definition}
\newtheorem{eg}[theorem]{Example}
\newcommand{\DirectedGraph}{E}
\newcommand{\bF}{{\mathbb{F}}}
\newcommand{\bN}{{\mathbb{N}}}
\newcommand{\bO}{{\mathbb{O}}}
\newcommand{\bT}{{\mathbb{T}}}
\newcommand{\bZ}{{\mathbb{Z}}}
  \newcommand{\A}{{\mathcal{A}}}
\renewcommand{\H}{{\mathcal{H}}}
  \newcommand{\K}{{\mathcal{K}}}
\renewcommand{\L}{{\mathcal{L}}}
\renewcommand{\O}{{\mathcal{O}}}
\renewcommand{\S}{{\mathcal{S}}}
  \newcommand{\U}{{\mathcal{U}}}
  \newcommand{\V}{{\mathcal{V}}}
  \newcommand{\W}{{\mathcal{W}}}
\newcommand{\upchi}{{\raise.35ex\hbox{\ensuremath{\chi}}}}
\newcommand{\qforal}{\quad\text{for all}\quad}
\newcommand{\Aut}{\operatorname{Aut}}
\newcommand{\ran}{\operatorname{Ran}}
\newcommand{\spn}{\operatorname{span}}
\newcommand{\BS}{\operatorname{BS}}
\begin{document}
\title[Wold Decomposition and C*-envelope of Self-Similar Graphs]{Wold Decomposition and C*-envelopes of Self-Similar
Semigroup Actions on Graphs}
\author[B. Li]{Boyu Li}
\address{Department of Mathematical Sciences, New Mexico State University, Las Cruces, New Mexico, 88003, USA}
\email{boyuli@nmsu.edu}
\author[D. Yang]{Dilian Yang}\thanks{}
\address{Dilian Yang,
Department of Mathematics $\&$ Statistics, University of Windsor, Windsor, ON
N9B 3P4, CANADA}
\email{dyang@uwindsor.ca}

\thanks{}
\thanks{The second author was partially supported by an NSERC Discovery Grant.}

\begin{abstract} 
We study the Wold decomposition for representations of a self-similar semigroup $P$ action on 
a directed graph $E$. We then apply this decomposition to the case where $P=\bN$ to study the C*-envelope of the associated universal non-selfadjoint operator algebra 
$\A_{\bN, E}$
by carefully constructing explicit non-trivial dilations for non-boundary representations. In particular, it is shown that the C*-envelope of $\A_{\bN, E}$ 
coincides with the self-similar graph C*-algebra $\O_{\bZ, E}$. 

\end{abstract}

\subjclass[2010]{47L55, 47A20, 47D03}
\keywords{Wold decomposition, self-similar graph, C*-envelope, dilation}

\maketitle

\section{Introduction} 

Every non-selfadjoint operator algebra is embedded inside the smallest C*-algebra, known as its C*-envelope. Computing the C*-envelopes of various operator algebras is an active area of research \cite{DK2011, DM2017, MS1998} since Arveson first introduced this concept in \cite{ArvesonSubalgI}. In particular, many non-selfadjoint operator algebras arise from semigroup dynamics \cite{DFK2014, Humeniuk2020, BLi2021}. One major conjecture in the study of semigroup C*-algebra is the question of whether the C*-envelope of {a} non-selfadjoint semigroup operator algebra is the boundary quotient C*-algebra. Significant progress towards this problem was made by Sehnem recently \cite{Sehnem2022}, where she proved that the C*-envelope of the tensor algebra of a sub-semigroup of a group is the boundary quotient C*-algebra, using the coaction technique introduced in the recent work of Dor-on et al. \cite{DKKLL2022}. It is worth mentioning that the main result of \cite{Sehnem2022} has been generalized and simplified very recently in \cite{BBD2023}.
However, the C*-envelope of the universal non-selfadjoint semigroup operator algebra remains open. 

One way to compute the C*-envelope is through a dilation theory technique developed by Dritschel and McCollough in \cite{DM2005}. They proved that the C*-envelope {is} generated by maximal representations, which are representations without non-trivial dilations. Therefore, computing the C*-envelope is equivalent to characterising maximal representations. To prove the C*-envelope is generated by a certain type of representations, it suffices to prove the following: (1) such representations are maximal; and (2) all other representations have non-trivial dilations. This approach has been successfully adopted in the study of certain semigroup dynamics \cite{BLi2021, Wiart2016}. This strategy often hinges on a clear characterization of the representation. 

In the representation of a semigroup, each semigroup element is represented by an isometry. The characterization of isometric representations dates back to the work of von Neumann, where he showed an isometric operator on a Hilbert space can be decomposed as a direct sum of a unitary and a pure isometry (i.e. a multiple of the unilateral shift on $\ell^2(\mathbb{N})$). This is known as a Wold-decomposition, and it is an important result with many applications. To compute the universal non-selfadjoint operator algebra generated by a single isometry, it suffices to see that {a unitary is maximal} while {the unilateral shift has} non-trivial dilations. As a result, the C*-envelope is the universal C*-algebra generated by {the unitary}, which is precisely $C(\mathbb{T})$. Moving towards more complicated semigroups, the Wold decomposition is often unclear. For example, even for a simple semigroup as $\mathbb{N}^2$, whose representations are generated by a pair of commuting isometries, its Wold decomposition is tricky. The case when the two isometries doubly commute was first studied in \cite{Slocinski1980}, but the general case is only settled in \cite{Popovici2004} more than 20 years later. A Wold-type decomposition is only known in a handful of contexts, including doubly-commuting isometries \cite{Sarkar2014, Slocinski1980}, row isometries \cite{Popescu1998_Wold}, {Toeplitz-Cuntz-Krieger} families of directed graphs \cite{DDL2019, JK2005}, and product systems {\cite{SZ2008}}. 

Motivated by the recent work of the first-named author \cite{BLi2022} on the Wold-type decomposition for odometer semigroups, we seek to develop a Wold-type theorem for isometric Nica-covariant representations on {more} general semigroups. The Nica-covariance condition originated from Nica's seminal work on semigroup operator algebras \cite{Nica1992}. In the case of $\mathbb{N}^2$, the Nica-covariance condition is precisely the doubly commuting condition considered in \cite{Slocinski1980}. This paper considers the class of {semigroups} (and more generally, small categories) arising from {self-similar $\mathbb{N}$-actions on directed graphs}. It contains a number of interesting classes of semigroups, including the odometer semigroup and the Baumslag-Solitar {semigroups} \cite{BRRW}. Its representation consists of a single isometry for $\mathbb{N}$ and a Toeplitz-Cuntz-Krieger family for the directed graph. We prove a Wold-type decomposition for these {representations:
Such a representation decomposes into 4 components}, corresponding to the 4 combinations where the isometry for $\mathbb{N}$ is pure or unitary, and the {Toeplitz-Cuntz-Krieger} family for the directed graph is pure shift or {Cuntz-Krieger}. 

We then {apply} this Wold decomposition to compute the C*-envelope of {the universal non-selfadjoint operator algebra associated to a self-similar $\mathbb{N}$-action on a directed graph}. Among the four {combinations} in its Wold decomposition, we prove that only the unitary + CK component is maximal. As a result, the unitary + CK type representations generate its C*-envelope, which coincides with the self-similar graph C*-algebra defined in \cite{EP2017}. Compared with the recent works of {Sehnem \cite{Sehnem2022}, Dor-on et al. \cite{DKKLL2022}, and Brix et al. \cite{BBD2023}}, we adopted a {\textit{constructive}} approach where we {\textit{explicitly}} construct non-trivial {dilations} in proving certain types of representations are not maximal. In particular, the dilation technique we {used} for the unitary + pure shift type is highly non-trivial. As a result, we are able to compute the C*-envelope of the \emph{universal} non-selfadjoint operator algebra, instead of the \emph{reduced} algebra (i.e. the tensor algebra generated by the left regular representation) in the aforementioned papers.
We hope this work can be further generalized to other semigroups and shed some light on the C*-envelope problem of universal semigroup operator algebras using dilation theory techniques. We would like to thank Kenneth Davidson and Marcelo Laca for their valuable comments and suggestions.

\section{Self-similar graphs}

The main object of this paper comes from the operator algebra arising from self-similar semigroup actions on directed graphs. We first give a brief introduction to their algebraic structures and representations in this section. 

Group and semigroup actions play a vital role in the construction of various structures. For example, the semi-direct product of two groups encodes a one-way automorphic action of one group on another. More generally, two groups may have two-way interactions between them, resulting in a more complex structure that encodes this two-way interaction, also known as the Zappa-Sz\'{e}p product \cite{Szep1950, Zappa1940}. For the dynamics between a group and a directed graph, this two-way interaction is closely related to the notion of the self-similar action, originated from Grigorchuk's construction of finitely generated groups of intermediate growth \cite{Grigorchuk1984, Grigorchuk1980}. Similar notions can also be made for semigroup actions on directed graphs \cite{BGN2003, Nekrashevych2006}. In the field of operator algebras, the representations of such structures often lead to curious interactions of the underlying representations, and the study on these representations and their operator algebras has been very active recently. See, for example, the case of self-similar groups \cite{Nekrashevych2009}, self-similar graph \cite{EP2017}, self-similar higher rank graphs \cite{LY2019, LY-ETDS, LY2019-IMRN}, semigroups \cite{BRRW, Starling2015}, groupoids \cite{BPRRW2017}, Fell bundles \cite{DL2022}, small categories \cite{BKQS, XLiSmallCategory}, and product systems \cite{LY2022}.

In this paper, a self-similar graph encodes a similar action between a \emph{semigroup} and a directed graph. Most of our analysis focuses on the case when the semigroup is simply $\mathbb{N}$. Even in such a simple case, it already encompasses a wide range of interesting examples, including some Zappa-Sz\'ep products considered in \cite{BRRW} and the `standard' Baumslag-Solitar semigroups. In general, our construction leads to a small category. C*-algebras of some small categories 
have been studied recently in \cite{Spielberg2020}. 

We now give an overview of our construction of self-similar graphs. 
A directed graph $\DirectedGraph=(\DirectedGraph^0,\DirectedGraph^1,r,s)$ {consists of a set $\DirectedGraph^0$ of vertices, a set $\DirectedGraph^1$ of directed edges, and two mappings $r,s: \DirectedGraph^1\to\DirectedGraph^0$ specifying the range and the source of each edge, respectively. }
We often use $\DirectedGraph^d$ to denote directed paths in $\DirectedGraph$ with length $d$. We also use $|\mu|$ to denote the length of a path $\mu$. We use $\DirectedGraph^*:=\cup_{d\geq 0} \DirectedGraph^d$ to denote the set of all finite directed paths.
For $v\in \DirectedGraph^0$ and $n\in \bN$, $vE^n$ is the set of all paths of length $n$ with range $v$; and similarly for $E^n v$. We also use $vE^*$ and $E^*v$ to denote all finite paths with range and source $v$ respectively.
\emph{We always assume all graphs in this paper are directed, finite, and source-free.} 

We say {that} a bijective map $\pi:E^0\cup E^1 \to E^0\cup E^1$ is an automorphism of the graph $E$ if 
$\pi$ maps $E^i$ to $E^i$ ($i=0,1$) and it commutes with the source and range maps. 
We use $\Aut(\DirectedGraph)$ to denote the set of all automorphisms of $\DirectedGraph$. 

Recall that a semigroup is a set $P$ with an associative multiplication. For simplicity, we always assume a semigroup is embedded inside a group, contains the identity $1_P$, and is discrete and countable. We say that \textit{$P$ acts on $\DirectedGraph$} if there is a semigroup homomorphism $\varphi$ from $P$ to $\Aut(\DirectedGraph)$.
For $p\in P$ and $\mu\in\DirectedGraph$, we often simply write $\varphi_p(\mu)$ as $p\cdot \mu$.
Obviously, for every $p\in P$ and $\mu \in \DirectedGraph^d$ ($d\geq 0$), there is a unique $\nu\in\DirectedGraph^d$ such that $p\cdot \nu = \mu$. 
This simple fact will be used frequently later without any further mention. 

Following the definition of self-similar group actions on graphs in \cite{EP2017}, we first define self-similar semigroup actions on graphs. 

\begin{defn}
\label{D:ss}
An action of $P$ on $E$ is called \emph{self-similar} if there is a restriction map $P\times (E^0\cup E^1)\to P, \ (p, \mu)\mapsto p|_\mu$ such that 
\begin{enumerate}[label=\textup{(S\arabic*)}, start=1]
\item\label{cond:S1} $(pq)|_\mu = p|_{q\cdot \mu} q|_\mu$ for all $p,q\in P$ and $\mu \in E^0\cup E^1$;
\item\label{cond:S2} $p|_v=p$ for all $p\in P$ and $v\in E^0$;
\item\label{cond:S3} $1_P|_e=1_P$ for all $e\in E^1$;
\item\label{cond:S4} $p|_e\cdot v=p\cdot v$ whenever $s(e)=v$. 
\end{enumerate}
In this case, We call $\DirectedGraph$ a \textit{self-similar graph over $P$}, or $(P,\DirectedGraph)$ a \textit{self-similar graph}. 
\end{defn}
One should note that when $P$ is a group, this is precisely the definition of a self-similar graph defined in \cite{EP2017}. 

\begin{rem}
\label{R:ss}
Given a self-similar graph $(P,\DirectedGraph)$, one can naturally extend the ambient {action and restriction} to $E^*$ 
satisfying the following properties: 
\begin{enumerate}[label=\textup{(S\arabic*')}, start=1]
\item\label{item:S1}
$p\cdot (\mu\nu)=(p \cdot \mu)(p \vert_\mu \cdot \nu)$ for all $p \in P,\mu,\nu \in \DirectedGraph^*$ with $s(\mu)=r(\nu)$;

\item\label{item:S2}
$p \vert_v =p$ for all $p \in P,v \in \DirectedGraph^0$;

\item\label{item:S3}
$p \vert_{\mu\nu}=p \vert_\mu \vert_\nu$ for all $p \in P,\mu,\nu \in \DirectedGraph^*$ with $s(\mu)=r(\nu)$;

\item\label{item:S4}
$1_P \vert_{\mu}=1_P$ for all $\mu \in \DirectedGraph^*$;

\item\label{item:S5}
$(pq)\vert_\mu=p \vert_{q \cdot \mu} q \vert_\mu$ for all $p,q \in P,\mu \in \DirectedGraph^*$.
\end{enumerate}
\end{rem}

For each vertex $v\in \DirectedGraph^0$, we use $\Omega_v$ to denote the orbit of $v$ under the $P$-action, that is, $\Omega_v=\{p\cdot v: p\in P\}$.
Since we assume $\DirectedGraph$ is a finite graph and the $P$-action is bijective, we have that for each $q\in P$, $v=q^k \cdot v$ for some finite $k\geq 1$. Therefore, for each $q\in P$, $\Omega_{q\cdot v}=\Omega_v$. In other words, the relation $v\sim q\cdot v$ defines an equivalence relation on $\DirectedGraph^0$ and its equivalence classes are precisely $\Omega_v$.   
Analogously, for each $e\in \DirectedGraph^1$,  by $\Omega_e$ we denote the orbit of $e$ under the $P$-action. By the same argument, each $\Omega_e$ is an equivalence class of the equivalence relation $e\sim q\cdot e$. 

Every self-similar graph is naturally associated with a small category structure. Let $\DirectedGraph^* \bowtie P=\{(\mu, p): \mu\in E^*, p\in P\}$ and define the multiplication by
\[
(\mu,p)\cdot (\nu,q) := (\mu(p\cdot \nu), p|_\nu q) \text{ whenever } s(\mu)=r(p\cdot \nu).
\]
We call the small category $\DirectedGraph^*\bowtie P$ the \textit{self-similar product of $\DirectedGraph$ by $P$}. 
In particular, if $\DirectedGraph$ has only one vertex and $n$ edges, $\DirectedGraph^*\bowtie P$ becomes a semigroup, in which case we call it the \textit{self-similar product semigroup of $\DirectedGraph$ by $P$}. This semigroup is precisely the Zappa-Sz\'{e}p product of the free semigroup $\bF^+_n$ with $P$ \cite{BRRW}. 
This construction includes several important classes of semigroups. Here we demonstrate two classes of semigroups that are the main motivation behind this paper. One can refer to \cite[Section 3]{BRRW} for more examples. 

\begin{eg} The odometer semigroup $\bO_n$ is generated by $n$ free generators $\{e_1,\dots,e_n\}$ and an additional generator $v$ such that 
\[ve_i=\begin{cases}
e_{i+1}, & \text{ if } 1\leq i\leq n-1, \\
e_1 v, & \text{ if } i=n.
\end{cases}\]
This can be realized as a self-similar graph as follows. Let $\DirectedGraph$ be a graph with a single vertex and $n$ edges $\{e_1,\dots,e_n\}$. A self-similar action by $\bN$ on $E$ is defined by the $\bN$-action
\[
1\cdot e_i = \begin{cases}
e_{i+1}, & \text{ if } 1\leq i\leq n-1, \\
e_1, & \text{ if } i=n,
\end{cases}
\]
and the restriction map 
\[1|_{e_i} = \begin{cases}
0, & \text{ if } 1\leq i\leq n-1, \\
1, & \text{ if } i=n.
\end{cases}
\]
One can verify that these maps define a self-similar graph $(\bN,\DirectedGraph)$. Moreover, the self-similar product semigroup $\DirectedGraph^*\bowtie \bN$ is precisely the odometer semigroup $\bO_n$. 
\end{eg}

\begin{eg} The Baumslag-Solitar semigroups $\BS^+(n,m)$ $(1\le m, n\in \bN)$ are among the first few non-trivial examples of quasi-lattice ordered semigroups, and as a result, attract a lot of attention in the study of semigroup C*-algebras (\cite{ABCD2021, CaHR2016, Spielberg2012}). For two positive integers $n,m\ge 1$, $\BS^+(n,m)$ is a semigroup generated by two generators $a,b$ with the relation $a^n b=ba^m$. Notice that the elements $b, ab, \dots, a^{n-1}b$ are free inside $\BS^+(n,m)$, so they generate a copy of the free semigroup $\bF_n^+$ inside $\BS^+(n,m)$. 

Let $e_i=a^{i-1}b$ for $1\leq i\leq n$. The $\BS^+(n,m)$ can be seen as a self-similar product semigroup where $P=\bN=\langle a\rangle$ act on $\bF^+_n=\langle e_1,\ldots,e_n\rangle$ via an $\bN$-action:
\[
a\cdot e_i = \begin{cases}
e_{i+1}, & \text{ if } 1\leq i\leq n-1, \\
e_1, & \text{ if } i=n,
\end{cases}
\]
{and} $\bN$-restriction:
\[a|_{e_i} = \begin{cases}
0, & \text{ if } 1\leq i\leq n-1, \\
a^m, & \text{ if } i=n.
\end{cases}
\]
Clearly, when $m=1$, this is precisely the self-similar action for the odometer semigroup $\bO_n$. So $\BS^+(n,1)$ coincides with $\bO_n$. 
\end{eg}

To study the operator algebras associated with self-similar actions, it is useful to encode the self-similar dynamics by representations. Representations of semigroups and graphs have been well-studied in the literature respectively. Here, we first briefly review their representations before combining them to encode self-similar actions. 

\begin{defn} Let $\DirectedGraph$ be a graph. A \emph{{Toeplitz-Cuntz-Krieger} (TCK) $\DirectedGraph$-family} is given by 
$\S:=\{S_\mu: \mu\in \DirectedGraph^0\cup \DirectedGraph^1\}$ such that
\begin{enumerate}[label=\textup{(TCK\arabic*)}, start=1]
    \item\label{item:TCK1} For each $v\in \DirectedGraph^0$, $S_v$ is an orthogonal projection. Moreover, $\{S_v\}_{v\in\DirectedGraph^0}$ is a pairwise orthogonal family.
    \item\label{item:TCK2} For each $e\in \DirectedGraph^1$, $S_e^* S_e=S_{s(e)}$.
    \item\label{item:TCK3} For each $v\in \DirectedGraph^0$, 
    \[\sum_{e\in v\DirectedGraph^1} S_e S_e^* \leq S_v.\]
\end{enumerate}
We say this family is a \emph{{Cuntz-Krieger} (CK) family} if \ref{item:TCK3} is replaced by 
\begin{equation} \tag{CK}
\sum_{e\in v\DirectedGraph^1} S_e S_e^* = S_v \text{ for all }v\in \DirectedGraph^0.
\end{equation}
\end{defn}

It is standard in the literature that TCK  and CK $\DirectedGraph$-families can be extended to $\DirectedGraph^*$ \cite{Raeburn2005}.  

\begin{eg}\label{ex.LEv} It is worth mentioning a special class of TCK $\DirectedGraph$-families -- the pure shifts. 
For this, let us recall the Fock representation of $\DirectedGraph$.
For $\mu \in \DirectedGraph^0\cup \DirectedGraph^1$, define the operator $L_\mu:\ell^2(\DirectedGraph^*)\to \ell^2(\DirectedGraph^*)$ as follows:
\begin{align*}
L_\mu(\delta_\nu):=\begin{cases}
   \delta_{\mu \nu}   &\text{ if $s(\mu)=r(\nu)$} \\
   0   &\text{ otherwise}.
\end{cases}
\end{align*}
One can check that $\L=\{L_\mu: \mu \in  \DirectedGraph^0\cup \DirectedGraph^1\}$ is a TCK $\DirectedGraph$-family. For each $v\in \DirectedGraph^0$, denote $\H_v=\overline{\spn}\{\delta_\nu: \nu\in\DirectedGraph^*, s(\nu)=v\}$. Since each $L_\mu$ append $\mu$ while $L_\mu^*$ cancel $\mu$ on the left only, each $\H_v$ is a reducing subspace for $\L$. We use $\L_{\DirectedGraph,v}$ to denote the restriction of $\L$ on $\H_v$. A representation is called a \emph{pure shift $\DirectedGraph$-family} if it is a direct sum of some amplifications of $L_{\DirectedGraph,v}$'s. Such families are also called the left regular type in the literature. 
\end{eg}

\begin{defn} Let $P$ be a semigroup. An isometric representation of $P$ is given by a family of isometries $\V=\{V_p\}_{p\in P}$, such that $V_pV_q=V_{pq}$ for all $p,q\in P$. If $1_P$ is the identity of $P$, then we always assume $V_{1_P}=I$. 
\end{defn}

\begin{rem} Recent studies on semigroup C*-algebras often put more structures (e.g. quasi-lattice, LCM, constructible ideals) on the ambient semigroups. While it might be useful to require these additional structures, the focus of this paper is on the interplay between semigroups and directed graphs via self-similar actions. We shall leave the analysis on these additional structures for later works.  
\end{rem}

We now introduce representations associated with a self-similar semigroup action on a graph.

\begin{defn}
\label{D:repn}
Let $(P, \DirectedGraph)$ be a self-similar graph. A \emph{(Toeplitz) representation} of $(P, \DirectedGraph)$ consists of an isometric representation $\V$ of $P$ on a Hilbert space $\H$ and a TCK family $\S$ for $\DirectedGraph$ on $\H$, such that
\begin{enumerate}
    \item[{\crtcrossreflabel{(SS)}[E:CD]}] 
    $V_p S_\mu=S_{p\cdot \mu} V_{p|_\mu}$ for all $p\in P$ and $\mu\in\DirectedGraph^0\cup \DirectedGraph^1$, 
    \item[{\crtcrossreflabel{(NC)}[E:NC]}] 
    $V_p^* S_{p\cdot \mu}=S_\mu V_{p|_\mu}^*$ for all $p\in P$ and $\mu\in\DirectedGraph^0\cup \DirectedGraph^1$. 
\end{enumerate}
\end{defn}

We first note that Conditions \ref{E:CD} and \ref{E:NC} imply they hold true for all $\mu \in E^*$, respectively. 
In fact, for $\mu=e_1\dots e_k$ and $p\in P$, we can check that
\[V_p S_\mu = V_p S_{e_1}\cdots S_{e_k} = S_{p\cdot e_1} S_{p|_{e_1} \cdot e_2} \cdots S_{p|_{e_1\dots e_{k-1}}\cdot e_k} V_{p|_\mu} = S_{p\cdot \mu} V_{p|_\mu}.\]
A similar computation can be made for Condition \ref{E:NC}. 

\begin{rem} The condition \ref{E:NC} is related to the Nica-covariance condition arising from the representations of quasi-lattice ordered semigroups. Without this condition, the Wold decomposition can be very difficult to characterize, as we shall illustrate in the next example. 
\end{rem}

\begin{eg} A representation of the odometer semigroup $\bO_n$ is determined by a single isometry $V$ and a row isometry $(S_1,\dots,S_n)$. Here, the single isometry $V$ corresponds to the isometric representation $\V$ of $\mathbb{N}$ by setting $V_p=V^p$. The condition \ref{E:CD} ensures that $V$ and $S$ intertwine with respect to the self-similar action. In the special case of $n=1$, this is simply requiring $V$ and $S_1$ are two commuting isometries. 

Without the condition \ref{E:NC}, the Wold decomposition of such isometries can be difficult, even in the case when $n=1$, which corresponds to a pair of commuting isometries. Fully characterizing the Wold decomposition of a pair of commuting isometries is difficult, which has only been done recently by Popovici \cite{Popovici2004} (see also \cite{BLi2022} for a similar characterization on odometer semigroups). 

On the other hand, the Wold decomposition under the condition \ref{E:NC} is much nicer. It is shown in \cite{BLi2022} (see also \cite{Slocinski1980} for commuting isometry case) that such a representation decomposes into 4 components, each corresponding to whether $V$ is pure or unitary and whether $S$ is pure or row unitary. 

For readers familiar with the semigroup C*-algebras, the condition \ref{E:NC} is the same as the Nica-covariant condition if we treat the odometer semigroup as a quasi-lattice ordered semigroup. While Nica-covariant representations are not the focus of this current paper, we do suspect a more general Wold decomposition for Nica-covariant representations and we will leave it for future investigations. 
\end{eg}

We would like to point out that Condition \ref{E:NC} follows automatically in some special cases. 

\begin{prop}\label{prop.automatic.NC} Let $\V$ be an isometric representation of $P$ and $\S$ a TCK family for $\DirectedGraph$, such that $(\V,\S)$ satisfies the condition \ref{E:CD}. If either $\V$ is a unitary representation or $\S$ is a CK family, we have that $(\V,\S)$ satisfies the condition \ref{E:NC}. 
\end{prop}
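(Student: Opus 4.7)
The plan is to verify \ref{E:NC} separately for vertices and edges, splitting the edge case according to the two hypotheses.

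For $\mu=v\in E^0$, the identity \ref{E:NC} is essentially free of any extra hypothesis. By \ref{cond:S2}, $p|_v=p$, so \ref{E:CD} reads $V_p S_v=S_{p\cdot v}V_p$. Taking the adjoint and using that $S_v$ and $S_{p\cdot v}$ are self-adjoint projections produces $S_v V_p^*=V_p^*S_{p\cdot v}$, which is \ref{E:NC} at $v$.

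For $\mu=e\in E^1$ with $\V$ unitary, the argument is short: starting from $V_p S_e=S_{p\cdot e}V_{p|_e}$, multiply on the right by $V_{p|_e}^*$ and on the left by $V_p^*$. Since $V_{p|_e}V_{p|_e}^*=I$ (unitary) and $V_p^*V_p=I$ (isometric), the identity collapses to $S_e V_{p|_e}^*=V_p^*S_{p\cdot e}$.

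For $\mu=e\in E^1$ with $\S$ a CK family, the strategy is to exploit the CK relation at the vertex $v=r(e)$, together with the bijection $f\mapsto p\cdot f$ from $vE^1$ onto $(p\cdot v)E^1$. Expanding both $S_v$ and $S_{p\cdot v}$ via CK, substituting the expansions into the vertex identity $V_p S_v=S_{p\cdot v}V_p$, and applying \ref{E:CD} termwise to $V_p S_f$ on the left-hand side, one arrives at
\[\sum_{f\in vE^1}S_{p\cdot f}\bigl(V_{p|_f}S_f^*-S_{p\cdot f}^*V_p\bigr)=0.\]
The TCK orthogonality relation $S_{p\cdot e}^*S_{p\cdot f}=\delta_{e,f}S_{s(p\cdot e)}$ then lets us isolate the $f=e$ summand by multiplying on the left by $S_{p\cdot e}^*$, yielding
\[S_{s(p\cdot e)}\bigl(V_{p|_e}S_e^*-S_{p\cdot e}^*V_p\bigr)=0.\]
A direct verification using \ref{cond:S2}, \ref{cond:S4}, and the identity $s(p\cdot e)=p\cdot s(e)$ shows that both $V_{p|_e}S_e^*$ and $S_{p\cdot e}^*V_p$ are already left-supported on $S_{s(p\cdot e)}$, so the displayed equation forces the two operators to coincide. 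Taking adjoints then recovers \ref{E:NC} at $e$.

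The main technical step is the CK case: the CK relation is what upgrades the always-valid vertex identity into a sum of edge identities, and the orthogonality of the family $\{S_f S_f^*\}_{f\in vE^1}$ hidden in the TCK axioms is what extracts the single edge summand. The unitary case is easier because $V_{p|_e}$ itself is invertible, so no such extraction is needed.
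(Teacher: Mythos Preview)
Your proof is correct. The unitary case and the vertex case match the paper's argument essentially verbatim.

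For the CK case your route differs from the paper's. The paper inserts the CK resolution of the identity directly on the left of $V_p^*S_{p\cdot\mu}$, writing
\[
V_p^*S_{p\cdot\mu}=\Big(\sum_{\nu\in E^{|\mu|}}S_\nu S_\nu^*\Big)V_p^*S_{p\cdot\mu},
\]
then uses the adjoint of \ref{E:CD} on each $S_\nu^*V_p^*$ and the orthogonality $S_{p\cdot\nu}^*S_{p\cdot\mu}=\delta_{\nu,\mu}S_{s(p\cdot\mu)}$ to collapse the sum in a single line. You instead pass through the vertex identity $V_pS_v=S_{p\cdot v}V_p$, expand both sides via CK at $v=r(e)$, and then isolate the $f=e$ term by left-multiplying by $S_{p\cdot e}^*$. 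Both arguments hinge on the same two ingredients (CK plus the bijectivity of $f\mapsto p\cdot f$ on $vE^1$), but the paper's version is a shade more economical since it never needs the separate left-support verification that you carry out at the end. On the other hand, your explicit split into vertex and edge cases makes transparent that \ref{E:NC} at vertices holds unconditionally, a fact the paper's proof leaves implicit.
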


\begin{proof} First suppose that $\V$ is unitary, we have for any $p\in P$ and $\mu\in\DirectedGraph^*$, 
\[
V_p^*S_{p\cdot \mu}=V_p^*S_{p\cdot \mu}V_{p|_\mu} V_{p|_\mu}^*=V_p^* V_p S_\mu V_{p|_\mu}^*=S_\mu V_{p|_\mu}^*.
\]

Now assume that $\S$ is CK, then $\sum_{\nu\in\DirectedGraph^m} S_\nu S_\nu^*=I$ for all $m\geq 0$. We have   
\begin{align*}
V_p^*S_{p\cdot \mu}
&=\sum_{\nu\in \DirectedGraph^{|\mu|}} S_\nu S_\nu^*\, V_p^*S_{p\cdot \mu}=\sum_{\nu\in \DirectedGraph^{|\mu|}} S_\nu V_{p|_\nu}^*  S_{p\cdot \nu}^* S_{p\cdot\mu}
  =S_\mu V_{p|_\mu}^* S_{p\cdot s(\mu)}\\
&=S_\mu (S_{p\cdot s(\mu)}V_{p|_\mu})^* = S_\mu (V_p S_{s(\mu)})^* = S_\mu  S_{s(\mu)} V_p^* =S_\mu V_p^*.
\end{align*}
Here, we used the fact that the $P$-action is a permutation of all paths of length $|\mu|$, so that $S_{p\cdot \nu}^* S_{p\cdot \mu}=0$ unless $\mu=\nu$. 
Moreover, $S_{p\cdot s(\mu)}$ is an orthogonal projection so that it is self-adjoint. 
\end{proof}

The following technical lemma is useful in later computations. 

\begin{lem}\label{lm.technical}
For each $e\in\DirectedGraph^1$ and $p\in P$, 
    \[V_p S_eS_e^* = S_{p\cdot e} S_{p\cdot e}^* V_p.\]
    Let $e=p\cdot f$ for some $f\in \DirectedGraph^1$. 
    Then
    \[V_p^* S_e S_e^* = S_fS_f^* V_p^*.\]
\end{lem}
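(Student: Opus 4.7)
The plan is to derive both identities by straightforward symbolic manipulation using the covariance relations (SS) and (NC) from Definition~\ref{D:repn}, together with their adjoints. Taking adjoints gives two dual forms, $S_\mu^* V_p^* = V_{p|_\mu}^* S_{p\cdot \mu}^*$ (from (SS)) and $S_{p\cdot \mu}^* V_p = V_{p|_\mu} S_\mu^*$ (from (NC)); these four relations are all that is needed.

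For the first identity, I would begin from $V_p S_e S_e^*$ and apply (SS) to the left factor to rewrite it as $S_{p\cdot e} V_{p|_e} S_e^*$. Substituting the adjoint of (NC) with $\mu = e$, namely $V_{p|_e} S_e^* = S_{p\cdot e}^* V_p$, collapses the tail and yields $S_{p\cdot e} S_{p\cdot e}^* V_p$, as desired.

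For the second identity, I would write $e = p \cdot f$ and apply (NC) directly to the left factor, giving $V_p^* S_e = V_p^* S_{p\cdot f} = S_f V_{p|_f}^*$, so that $V_p^* S_e S_e^* = S_f V_{p|_f}^* S_e^*$. The adjoint of the (SS) relation $V_p S_f = S_{p\cdot f} V_{p|_f} = S_e V_{p|_f}$ is precisely $V_{p|_f}^* S_e^* = S_f^* V_p^*$, and substituting this produces $S_f S_f^* V_p^*$.

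There is no real obstacle; the only care required is tracking the restriction subscripts $p|_e$ and $p|_f$ and applying (SS)/(NC) with the correct argument on each side. Only the edge-level versions of the covariance conditions are used, so no appeal to the extension to $E^*$ noted after Definition~\ref{D:repn} is needed.
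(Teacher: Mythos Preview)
Your argument is correct and is exactly the computation the paper has in mind: the authors omit the proof, remarking only that it is ``an easy application of Conditions~\ref{E:CD} and \ref{E:NC},'' and your manipulation via (SS), (NC), and their adjoints is precisely that.
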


We omit the proof here since it's an easy application of Conditions~\ref{E:CD} and \ref{E:NC}. 

We now introduce an archetypal example of a Toeplitz representation. This is often known as the left regular representation. 

\begin{eg}  
Let $(P, E)$ be a self-similar graph. 
We first consider a Fock-type representation of the small category $\DirectedGraph^*\bowtie P$. Consider $\K=\ell^2(\DirectedGraph^*\times P)$ where we use $\{\delta_{\mu,p}: \mu\in\DirectedGraph^*, p\in P\}$ to denote its orthonormal basis. 
For each $q\in P$ and $\nu\in\DirectedGraph^i$ ($i=0,1$), define 
\begin{align*}
V_q \delta_{\mu,p} &= \delta_{q\cdot \mu, q|_\mu p},\\
S_\nu \delta_{\mu,p} &= \begin{cases}
\delta_{\nu\mu, p}, &\text{ if } s(\nu)=r(\mu),\\
0, &\text{ otherwise.}
\end{cases}
\end{align*}
We claim that $(\S,\V)$ defines a representation of $(P, E)$. 

We first verify that $\V$ is an isometric representation of $P$. For any $(\nu_1,p_1)\neq (\nu_2,p_2)$, we either have $\nu_1\neq \nu_2$ so that $q\cdot \nu_1\neq q\cdot \nu_2$, or have $\nu_1=\nu_2$ and $p_1\neq p_2$ so $q|_{\nu_1} p_1\neq q|_{\nu_2} p_2$. This proves that each $V_q$ maps the orthonormal basis to an orthonormal set and thus is an isometry. For the identity $1_P\in P$, $V_{1_P}$ is clearly the identity map by Condition \ref{item:S4}. For any $q,r\in P$, and $(\mu,p)\in\DirectedGraph^*\times P$, 
\[V_qV_r\delta_{\mu,p} = \delta_{q\cdot (r\cdot \mu), q|_{r\cdot \mu} r|_\mu p} = \delta_{(qr)\cdot \mu, (qr)|_\mu p} = V_{qr} \delta_{\mu,p}.\]

The family $\S$ is clearly a TCK $\DirectedGraph$-family since for each fixed $p$, the family act as a pure shift on $\overline{\spn}\{\delta_{\mu,p}: \mu\in\DirectedGraph^*\}$. To see $\S$ is a Toeplitz representation. We first check Condition \ref{E:CD}. Take any $q\in P$ and $\nu\in\DirectedGraph^*$, and any basic vector $\delta_{\mu,p}$. We have
\begin{align*}
    V_q S_\nu \delta_{\mu,p} &= V_q \delta_{\nu\mu,p} \\
    &= \delta_{q\cdot (\nu\mu), q|_{\nu\mu} p} \\
    &= \delta_{(q\cdot \nu)(q|_\nu\cdot \mu), (q|_\nu)|_\mu p} \\
    &= S_{q\cdot \nu} \delta{q|_\nu\cdot \mu, (q|_\nu)|_\mu p} \\
    &= S_{q\cdot \nu} V_{q|_\nu} \delta_{\mu,p}.
\end{align*}
To prove Condition \ref{E:NC}, it suffices to proves it adjoint: For any $q\in P$ and $\nu\in\DirectedGraph^*$, we have $S_{q\cdot \nu}^* V_q = V_{q|_\nu} S_\nu^*$. Fix a basic vector $\delta_{\mu,p}$, we first observe that
\[
S_\nu^* \delta_{\mu,p} = \begin{cases}
\delta_{\mu_1,p}, & \text{ if }\mu=\nu\mu_1 \text{ for some $\mu_1\in E^*$}, 
\\
0, & \text{ otherwise}. 
\end{cases}
\]
Therefore, in the case when $\mu=\nu\mu_1$, we have
\begin{align*}
V_{q|_\nu} S_\nu^* \delta_{\mu,p} &= V_{q|_\nu} \delta_{\mu_1, p} \\
&= \delta_{q|_\nu \cdot \mu_1, (q|_{\nu})|_{\mu_1} p}\\
&= S_{q\cdot \nu}^* \delta_{(q\cdot \nu)(q|_\nu \cdot \mu_1, q|_{\nu\mu_1} p} \\
&= S_{q\cdot \nu}^* \delta_{q\cdot \mu, q|_\mu p} \\
&= S_{q\cdot \nu}^* V_q \delta_{\mu,p}.
\end{align*}
In the case when $\mu$ does not start with $\nu$, we have that $q\cdot \mu$ does not start with $q\cdot \nu$ as well. Therefore,
\[S_{q\cdot \nu} V_q \delta_{\mu,p} = S_{q\cdot \nu}^* \delta_{q\cdot \mu, q|_\mu p}=0.\]
Hence, we verified that $(\V,\S)$ defines a Toeplitz representation of $(P,\DirectedGraph)$. 

When $\DirectedGraph^0$ contains more than one vertex, this representation is reducible. For each $v\in\DirectedGraph^0$, let 
\[\K_v=\overline{\spn}\{S_\mu V_p \delta_{v,e}: \mu\in\DirectedGraph^*, p\in P\}=\overline{\spn}\{\delta_{\mu,p}: \mu\in\DirectedGraph^*, p\in P, s(\mu)=p\cdot v\}.\]
One can verify that $\K_v$ reduces the Fock representation $(\V,\S)$. We shall use the notation $(\lambda^P_v, \lambda^\DirectedGraph_v)$ to denote the restriction of $(\V,\S)$ on $\K_v$.

A representation of $(P, E)$ is called \textit{left regular} if it is a direct sum of amplifications of these $\lambda$'s.
\end{eg}

Left regular representations are closely related to the notion of ``wandering vectors". 

\begin{defn} Let $(\V,\S)$ be a Toeplitz representation of a self-similar graph $(P, E)$ on a Hilbert space $\H$. We say a unit vector $\xi\in\H$ is a \textit{wandering vector} of $(\V,\S)$ if the vectors $\{S_\mu V_p \xi: \mu\in\DirectedGraph^*, p\in P\}$ are pairwise orthogonal to each other, and each vector is either $0$ or a unit vector. 
\end{defn}

Now suppose $\xi$ is a wandering vector for $(\V,\S)$. Then let $\H_\xi=\overline{\spn}\{S_\mu V_p \xi: \mu\in\DirectedGraph^*, p\in P\}$. We have $\H_\xi$ is invariant for $(\V,\S)$. From the definition of wandering vectors, it is easy to see that the restriction of $(\V,\S)$ on $\H_\xi$ is unitarily equivalent to $(\lambda_v^P, \lambda_v^\DirectedGraph)$.

\section{Wold decomposition on self-similar graphs}

This section aims to provide a Wold decomposition for Toeplitz representations of self-similar graphs $(\bN, \DirectedGraph)$ (Theorem \ref{thm.Wold.main}). It 
is shown that every Toeplitz representation consists of four components: unitary + CK, pure + CK, unitary + pure shift, and left regular. 

\subsection{Wold decomposition for Toeplitz representations of self-similar graphs} 

The Wold decomposition for a TCK family of graphs dates back to \cite[Section 2]{JK2005} (see also \cite[Section 4]{DDL2019}). 

\begin{thm}[\cite{DDL2019, JK2005}]\label{thm.Wold.TCK}
Let $\DirectedGraph$ be a graph and $\S$ be a TCK $\DirectedGraph$-family on $\H$. For each $v\in \DirectedGraph^0$, define 
\[\W_v=\left(S_v-\sum_{e\in vE^1} S_eS_e^*\right)\H.\] 
Then each $\W_v$ is a wandering subspace for $\S$. Let 
\[\H_v=\bigoplus_{\mu\in\DirectedGraph^*} S_\mu \W_v.\]
Then $\H_v$ reduces $\S$ and $\S|_{\H_v}\cong L_{E,v}^{(\alpha_v)}$, where $\alpha_v=\dim \W_v$.

The space $\H$ decomposes as $\H=\H_C \oplus \left(\bigoplus_{v\in \DirectedGraph^0} \H_v\right)$ where 
\[\H_C=\bigcap_{d\ge 1}\bigoplus_{\mu\in E^d} S_\mu \H.\]
Here, $\H_C$ reduces $\S$ and $\S|_{\H_C}$ is a CK family.
\end{thm}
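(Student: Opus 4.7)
The plan is to verify the claims about $\W_v$, $\H_v$, and $\H_C$ in sequence, relying on the standard iterative unfolding of the TCK relations at each vertex. The main conceptual work will be in step two, where one must simultaneously track orthogonality of the various $S_\mu \W_{s(\mu)}$ summands and identify the residual intersection correctly as the CK component; once this is done, the remaining pieces are routine unwinding of partial-isometry identities.

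First, I would check that the defect operator $Q_v := S_v - \sum_{e \in vE^1} S_eS_e^*$ is the orthogonal projection onto $\W_v$. By the TCK relations, each $S_e$ is a partial isometry with $S_e^* S_e = S_{s(e)}$ and $S_eS_e^* \leq S_v$ for $e \in vE^1$; moreover $S_f^* S_e = \delta_{e,f} S_{s(e)}$, so the range projections $\{S_eS_e^*\}_{e \in vE^1}$ are pairwise orthogonal and $Q_v$ is itself a projection. To prove $\W_v$ is wandering, it suffices to verify that $Q_v S_\mu = 0$ for every $\mu \in E^*$ with $|\mu| \geq 1$: writing $\mu = f\mu'$ with $f \in E^1$, one computes that $\sum_{e \in vE^1} S_eS_e^* S_\mu$ equals $S_f S_{\mu'} = S_\mu$ when $r(f) = v$ and vanishes otherwise, matching $S_v S_\mu$ in both cases. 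Standard TCK path algebra then yields the pairwise orthogonality of the subspaces $\{S_\mu \W_v\}_{\mu \in E^*}$.

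Second, for the structure of $\H_v$, I would argue by induction on $d$ that
\[
S_v \H \;=\; \bigoplus_{\substack{\mu \in E^*,\, r(\mu)=v \\ |\mu| < d}} S_\mu \W_{s(\mu)} \;\oplus\; \bigoplus_{\mu \in vE^d} S_\mu \H,
\]
using the splitting $S_w = Q_w + \sum_{e \in wE^1} S_e S_e^*$ at each $w = s(\mu)$ for $\mu \in vE^d$, together with the fact that $S_e$ restricts to an isometry from $S_{s(e)}\H$ onto $S_eS_e^*\H$. Orthogonality of the summands is supplied by the wandering property from step one. Letting $d \to \infty$ and summing over starting vertices yields
\[
\H \;=\; \Bigl(\bigoplus_{w \in E^0} \H_w\Bigr) \;\oplus\; \H_C,
\]
with $\H_w = \bigoplus_{\mu: s(\mu)=w} S_\mu \W_w$ and $\H_C = \bigcap_{d \geq 1} \bigoplus_{\mu \in E^d} S_\mu \H$; the intersection commutes with the vertex-wise direct sum because the family is decreasing in $d$.

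Third, to see that $\H_w$ reduces $\S$ and that the restriction is unitarily equivalent to $L_{E,w}^{(\alpha_w)}$, I would fix an orthonormal basis $\{\xi_i\}_{i<\alpha_w}$ of $\W_w$ and define a unitary $U : \H_w \to \ell^2(E^*w) \otimes \bC^{\alpha_w}$ by $U(S_\mu \xi_i) = \delta_\mu \otimes e_i$; the intertwinings with $S_e$ and $S_e^*$ are direct translations of the TCK identities and the wandering relations. For $\H_C$: invariance under $S_e$ is immediate since paths of length $d$ are mapped to paths of length $d+1$, invariance under $S_e^*$ follows from $S_e^* S_\mu = 0$ unless $\mu$ begins with $e$, and the CK relation holds on $\H_C$ because any vector in $\bigoplus_{\mu \in E^1} S_\mu \H$ is annihilated by every defect $Q_v$.
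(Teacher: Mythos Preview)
The paper does not supply its own proof of this theorem; it is quoted verbatim as a known result from \cite{DDL2019, JK2005}, and the surrounding text immediately moves on to use it. Your sketch is the standard argument from those references and is correct: the defect projections $Q_v$ give wandering subspaces, the inductive splitting $S_w = Q_w + \sum_{e\in wE^1} S_eS_e^*$ peels off one layer of the tree at a time, and what remains in the intersection is exactly the CK part. One small point worth tightening in a final write-up is the claim that $\H_w$ is invariant under $S_e^*$: you handle $S_e^* S_\mu$ for $|\mu|\ge 1$, but should also record explicitly that $S_e^* Q_w = 0$ (which follows from $S_e^* S_f S_f^* = \delta_{e,f} S_e^*$), so that $S_e^*$ annihilates the bottom summand $\W_w$ as well.
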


We first observe that the representation $\V$ behaves well with respect to the decomposition of $\H$ for $\S$.

\begin{prop}\label{prop.TCK.reduceV} Let $(\V,\S)$ be a Toeplitz representation of a self-similar graph $(P,\DirectedGraph)$ on $\H$. Let $\W_v$, $\H_v$, and $\H_C$ as in Theorem \ref{thm.Wold.TCK}. 
\begin{enumerate}
    \item For each $v\in \DirectedGraph^0$, set $\W_{[v]}=\bigoplus_{u\in\Omega_v} \W_u$. Then $\W_{[v]}$ reduces $\V$.
    \item For each $m\geq 0$, set $\H_{[v]}^m  = \bigoplus_{\mu\in\DirectedGraph^m} S_\mu \W_{[v]}$. Then $\H_{[v]}^m$ reduces $\V$. 
    \item For each $v\in \DirectedGraph^0$, set $\H_{[v]}=\bigoplus_{u\in\Omega_v} \H_u$. Then $\H_{[v]}$ reduces $\V$.
    \item $\H_C$ reduces $\V$ as well. 
\end{enumerate}
\end{prop}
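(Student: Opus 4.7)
The strategy is to show that $V_p$ and $V_p^*$ permute the wandering pieces of Theorem \ref{thm.Wold.TCK} in parallel with the $P$-action on vertices and paths, so any $P$-invariant union of such pieces is reducing. For (i), the plan is to establish the intertwinings
\[ V_p P_u = P_{p\cdot u} V_p \qquad \text{and} \qquad V_p^* P_{p\cdot u} = P_u V_p^*, \]
where $P_u := S_u - \sum_{e \in uE^1} S_e S_e^*$ is the orthogonal projection onto $\W_u$. The first follows by expanding $V_p P_u$: the vertex term reduces via \ref{E:CD} and \ref{item:S2} to $S_{p\cdot u} V_p$, while the edge sum is handled by Lemma \ref{lm.technical} together with the fact that $e \mapsto p\cdot e$ is a bijection $uE^1 \to (p\cdot u)E^1$, which lets me re-index the sum. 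The second identity is symmetric, using \ref{E:NC} and the adjoint form in Lemma \ref{lm.technical}. Since $P$ acts by finite-order bijections on the finite set $E^0$, the orbit $\Omega_v$ is closed under both $p\cdot$ and its set-theoretic inverse, so both intertwinings deliver $V_p \W_{[v]}, V_p^* \W_{[v]} \subseteq \W_{[v]}$.

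For (ii), a direct application of \ref{E:CD} gives $V_p(S_\mu \xi) = S_{p\cdot \mu} V_{p|_\mu}\xi$ with $V_{p|_\mu}\xi \in \W_{[v]}$ by (i) and $p\cdot \mu \in E^m$, so this lands in $\H_{[v]}^m$. For $V_p^*(S_\mu \xi)$, I write $\mu = p \cdot \mu'$ for the unique $\mu' \in E^m$ and apply \ref{E:NC}. Part (iii) then follows by rearranging direct sums: both $\H_{[v]}$ and $\bigoplus_{m \geq 0} \H_{[v]}^m$ collapse to $\bigoplus_{\mu \in E^*,\, s(\mu) \in \Omega_v} S_\mu \W_{s(\mu)}$ (using that $S_\mu \W_u = 0$ unless $s(\mu) = u$), so (iii) is just (ii) summed over $m$.

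For (iv), set $\K_d := \bigoplus_{\mu \in E^d} S_\mu \H$ and rerun the same two one-line calculations from (ii) with arbitrary $\xi \in \H$ in place of $\xi \in \W_{[v]}$; this shows $V_p \K_d, V_p^* \K_d \subseteq \K_d$, so each $\K_d$ reduces $\V$ and hence so does $\H_C = \bigcap_{d \geq 1} \K_d$. There is no substantive obstacle here — the entire proposition is a bookkeeping verification. The one subtlety worth flagging is that each re-indexing step quietly uses that $P$ acts bijectively at the relevant level ($E^0$, $uE^1$ for each $u$, and $E^m$ for each $m$), all of which is immediate from $P \subseteq \Aut(E)$.
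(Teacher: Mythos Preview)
Your proof is correct and follows essentially the same route as the paper's: both establish $V_p \W_u \subseteq \W_{p\cdot u}$ and $V_p^* \W_u \subseteq \W_{u'}$ (with $p\cdot u' = u$) via Lemma~\ref{lm.technical}, then push this through the $S_\mu$-layers using \ref{E:CD} and \ref{E:NC}. The only minor divergence is in part (iv): the paper observes that $\H_C$ is the orthogonal complement of $\bigoplus_{\Omega_v} \H_{[v]}$ and invokes (iii), whereas you show directly that each $\K_d = \bigoplus_{\mu\in E^d} S_\mu \H$ reduces $\V$ and pass to the intersection --- both are immediate.
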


\begin{proof} To prove $\W_{[v]}$ reduces $\V$, we first observe that $h\in \W_v$ if and only if 
$h=(S_v-\sum_{e\in v E^1} S_eS_e^*)h$. 
Take $h\in \W_v$, using computations from Lemma \ref{lm.technical}, we have
\begin{align*}
     V_p h 
   &= \left(S_{p\cdot v} - \sum_{e\in vE^1} S_{p\cdot e} S_{p\cdot e}^*\right) V_p h 
     = \left(S_{p\cdot v} - \sum_{f\in vE^1} S_{f} S_{f}^*\right) V_p h, \\
    V_p^* h 
  &= \left(S_{u} - \sum_{p\cdot f\in vE^1} S_{f} S_{f}^*\right) V_p^* h 
    = \left(S_{u} - \sum_{f\in uE^1} S_{f} S_{f}^*\right) V_p^* h, \quad \text{where }p\cdot u=v.
\end{align*}
Here, we used the fact that $p|_u=p$ for all $u\in\DirectedGraph^0$. This proves that $V_p \W_v = \W_{p\cdot v}$ and $V_p^* \W_v=\W_u$ when $p\cdot u=v$. In either case, $p\cdot v$ and $u$ are still in $\Omega_v$ and thus $\W_{[v]}$ reduces each $V_p$. 

Now for a fixed $m\geq 0$, for any $\mu\in\DirectedGraph^m$ and $h\in\W_{[v]}$, we have 
\[V_p S_\mu h =S_{p\cdot \mu} V_{p|_\mu} h \in S_{p\cdot \mu} \W_{[v]}.\] 
Here, $V_{p|_\mu} h \in \W_{[v]}$ since $\W_{[v]}$ reduces $\V$. Now $p\cdot \mu$ is also a path of length $m$, and thus $V_p S_\mu h \in \H_{[v]}^m$. Similarly, if $\mu=p\cdot \nu$ for some path $\nu\in\DirectedGraph^*$, we must have $\nu\in\DirectedGraph^m$ as well. 
Now, we have that, 
\[V_p^* S_\mu h = S_\nu V_{p|_\nu}^* h \in \H_{[v]}^m.\]
Therefore, $\H_{[v]}^m$ reduces $\V$ as well. 

Now since 
\[\H_{[v]}=\bigoplus_{\mu\in\DirectedGraph^*} S_\mu\W_{[v]} = \bigoplus_{m\geq 0} \bigoplus_{\mu\in\DirectedGraph^m} S_\mu \W_{[v]} = \bigoplus_{m\geq 0} \H_{[v]}^m.\]
We have $\H_{[v]}$ reduces $\V$. Finally, $\H_C$ is the orthogonal complement of the direct sum of all $\H_{[v]}$ and thus $\H_C$ reduces $\V$ as well. 
\end{proof}



\begin{cor}
\label{C:P=G}
Suppose that $(P, \DirectedGraph)$ is a self-similar graph. Let $(\V,\S)$ be a Toeplitz representation of $(P, \DirectedGraph)$ on a Hilbert space $\H$. 
Then $\H$ can be uniquely decomposed as follows 
\[
\H=\H_C \oplus \left(\bigoplus_{\Omega_v \in \DirectedGraph^0/\sim} \H_{[v]}\right)
\]
such that each component reduces $(\V,\S)$ and ${\S}|_{\H_C}$ is a CK family.
Furthermore, if $P$ is a group, then ${\S}|_{\H_{[v]}} \cong L_{\DirectedGraph,v}^{(\alpha_v)}$ with $\alpha_v=\dim \W_u$ for all $u\in\Omega_v$.  
\end{cor}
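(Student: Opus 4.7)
The plan is to deduce the corollary directly from Theorem~\ref{thm.Wold.TCK} together with Proposition~\ref{prop.TCK.reduceV}, with essentially no new construction. First I would apply the Wold decomposition for TCK families to obtain $\H = \H_C \oplus \bigoplus_{v \in \DirectedGraph^0} \H_v$, and then regroup the summands by the orbit relation $\sim$ on $\DirectedGraph^0$, setting $\H_{[v]} = \bigoplus_{u \in \Omega_v} \H_u$. This gives $\H = \H_C \oplus \bigoplus_{\Omega_v \in \DirectedGraph^0/\sim} \H_{[v]}$. Each summand is $\S$-reducing by Theorem~\ref{thm.Wold.TCK} and $\V$-reducing by Proposition~\ref{prop.TCK.reduceV}, so each reduces the full pair $(\V, \S)$, and $\S|_{\H_C}$ is a CK family. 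For uniqueness, I would invoke the standard uniqueness of the graph Wold decomposition: $\H_C$ is intrinsically the maximal $\S$-reducing subspace on which $\S$ is CK, and each $\H_{[v]}$ is then pinned down as the unique $\S$-reducing complement whose wandering subspace is $\W_{[v]}$, which is itself determined by the orbit partition of $\DirectedGraph^0$.

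For the ``furthermore'' statement, the only additional input needed is that $\dim \W_u$ is constant on each orbit $\Omega_v$ when $P$ is a group. Since $P$ is a group, every $V_p$ is unitary because $V_p V_{p^{-1}} = V_{p^{-1}} V_p = V_{1_P} = I$. The computation in the proof of Proposition~\ref{prop.TCK.reduceV} shows $V_p \W_u \subseteq \W_{p\cdot u}$ and $V_p^* \W_{p\cdot u} \subseteq \W_u$; applying $V_p$ to the second containment and using $V_p V_p^* = I$ yields $\W_{p\cdot u} \subseteq V_p \W_u$, so $V_p$ restricts to a unitary from $\W_u$ onto $\W_{p\cdot u}$. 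Thus $\dim \W_u$ is well-defined on the orbit and we may set $\alpha_v$ equal to this common value. Feeding this constant dimension into the identification $\S|_{\H_u} \cong L_{\DirectedGraph, u}^{(\alpha_v)}$ supplied by Theorem~\ref{thm.Wold.TCK} and summing over $u \in \Omega_v$ delivers the advertised form of $\S|_{\H_{[v]}}$.

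The only nontrivial step is this upgrade from one-sided containments to equalities of wandering subspaces via unitarity of $V_p$, which is where the hypothesis that $P$ is a group is actually used; the remainder is bookkeeping that assembles the orbit-indexed decomposition from the two cited results.
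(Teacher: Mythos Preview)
Your proposal is correct and follows essentially the same approach as the paper: invoke Proposition~\ref{prop.TCK.reduceV} (together with Theorem~\ref{thm.Wold.TCK}) for the decomposition, and for the ``furthermore'' clause verify that $\dim \W_u$ is constant along each orbit when $P$ is a group. The paper phrases the latter step as ``this can be reversed using $V_{g^{-1}}$'' rather than your explicit use of $V_pV_p^*=I$ on the containments, but the content is identical.
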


\begin{proof}
By Proposition \ref{prop.TCK.reduceV}, it suffices to prove that $\dim \W_u$ is constant among all $u\in\Omega_v$ when $P$ is a group. Notice that 
we have shown that for each $g\in P$, $V_g$ maps $\W_v$ to $\W_{g\cdot v}$. Since $P$ is a group, this can be reversed using $V_{g^{-1}}$ and thus $\dim \W_v=\dim \W_{g\cdot v}$ for all $g\in P$. 
\end{proof}

We want to turn our attention to further decomposing the isometric representation $\V$ on each component. The Wold decomposition for a general isometric semigroup representation is not well understood (see \cite{Popescu1998_Wold, Popovici2004, Suciu1968}, where a representation of semigroups often has a component other than unitary and left-regular). Therefore, we focus on the case when $P=\mathbb{N}$, on which the classical Wold decomposition applies. This is also motivated by the earlier research on the Wold decomposition of odometer semigroups \cite{BLi2022}.

\subsection{Case when \texorpdfstring{$P=\mathbb{N}$}{P=N}} An isometric representation $\V$ of $\bN$ is uniquely determined by a single isometry $V_1$ since $V_m=V_1^m$. For simplicity, we often use $V$ to denote the isometry $V_1$. 

Suppose $(\V,\S)$ is a Toeplitz representation of a self-similar graph $(\bN, \DirectedGraph)$ on a Hilbert space $\H$. Proposition \ref{prop.TCK.reduceV} shows that we can decompose $\H$ into a direct sum of $\H_C$ and $\H_{[v]}$'s, where each subspace reduces $\V$.
The goal of this section is to further decompose each of these components to reducing subspaces of $(\V,\S)$, based on whether $\V$ is unitary or pure. 

For some technical reasons, we need the following assumption:
\begin{enumerate}[label=\textup{(A)}]
\item\label{item:A1}
For every $e\in E^1$, there is $m\in \bN$ such that $m|_e\ne 0$. 
\end{enumerate}

Notice that the assumption \ref{item:A1} is equivalent to $n|_e\to \infty$ as $n\to \infty$.


\begin{rem} We would like to point out that the assumption \ref{item:A1} 
is equivalent to saying that there are sufficiently many $e$ with $1|_e\neq 0$. In fact, from Condition \ref{item:S5}, we have 
$m|_e = \sum_{i=1}^{m-1} 1|_{i\cdot e}$.
Therefore, \ref{item:A1} is also equivalent to the following: For any $e\in\DirectedGraph^1$, there exists $f\in\Omega_e$ such that $1|_f\neq 0$.
\end{rem}

\begin{rem}
\label{R:1|e m}
For $v\in \DirectedGraph^0$, it follows from Condition~\ref{cond:S4} that $1|_e \cdot v= 1 \cdot v$ whenever $e\in \DirectedGraph^1 v$. 
Thus we must have $1|e= 1 \!\! \mod |\Omega_v|$. In particular, if $|\Omega_v|>1$ for every $v\in \DirectedGraph^0$, then our assumption (A) is redundant. 
\end{rem}

From the remarks above, it is not surprising to see that many examples satisfy the assumption \ref{item:A1}. 
For example, in the case of the odometer semigroup $\bO_n$, the graph $\DirectedGraph$ is a single vertex with $n$ loops. The orbit of each edge $e$ consists of all the edges. Since $1|_{e_n}=1\neq 0$,  the assumption\ref{item:A1} is clearly satisfied. 

\medskip
We first consider the Wold decomposition of $\V$. We can write $\H=\H^U\oplus \H^S$, where $\V$ is a unitary on $\H^U$ and a pure shift on $\H^S$, respectively. Here, 
\begin{align*}
\H^U = \bigcap_{n\in\bN} V^n \H  \ \text{ and } \
\H^S =\bigoplus_{n\in \bN} V^n\left(\ker V^*\right).
\end{align*}
We first claim that both $\H^U$ and $\H^S$ reduce $\S$. 

\begin{prop} Both $\H^U$ and $\H^S$ reduce $\S$. 
\end{prop}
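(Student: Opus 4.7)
The plan is to show that $\H^U$ is invariant under both $S_\mu$ and $S_\mu^*$ for every $\mu \in \DirectedGraph^0 \cup \DirectedGraph^1$; the orthogonal complement $\H^S$ will then reduce $\S$ automatically, since $\H^U$ is already invariant under a $\ast$-closed family. The two intertwining identities I plan to use repeatedly are $V^n S_\mu = S_{n\cdot \mu}\, V^{n|_\mu}$, which is \ref{E:CD} iterated, and $S_{n\cdot \mu}^* V^n = V^{n|_\mu}\, S_\mu^*$, obtained by taking adjoints of \ref{E:NC}.

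For the inclusion $S_\mu \H^U \subseteq \H^U$, I would fix $h \in \H^U$ and $N \ge 1$, pick the unique $f \in \DirectedGraph^0 \cup \DirectedGraph^1$ with $N\cdot f = \mu$, and write $h = V^{N|_f} h'$ (possible since $h$ lies in $V^m\H$ for every $m$). The first identity then gives $S_\mu h = S_{N\cdot f} V^{N|_f} h' = V^N S_f h' \in V^N\H$. Since $N$ was arbitrary, $S_\mu h \in \H^U$. Note that no use of \ref{item:A1} is required for this step.

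The inclusion $S_\mu^* \H^U \subseteq \H^U$ splits into cases. If $\mu = v$ is a vertex, then \ref{cond:S2} forces $n|_v = n$, so choosing $f \in \DirectedGraph^0$ with $n\cdot f = v$ and writing $h = V^n h_n$ yields $S_v^* h = V^n S_f^* h_n \in V^n\H$ for every $n$, whence $S_v^* h \in \H^U$. If $\mu = e$ is an edge, let $f_n$ denote the unique edge with $n\cdot f_n = e$; the second identity yields $S_e^* h = V^{n|_{f_n}}\, S_{f_n}^* h_n \in V^{n|_{f_n}}\H$, which only places $S_e^* h$ in a power of $V$ controlled by $n|_{f_n}$. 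To conclude $S_e^* h \in V^N\H$ for an arbitrary $N$, I would invoke assumption \ref{item:A1}, which according to the preceding remarks is equivalent to $n|_f \to \infty$ for every edge $f$. Since $f_n$ cycles periodically through the finite orbit $\Omega_e$, the divergence applies along each residue class modulo the period, hence to the full sequence $n|_{f_n}$; choosing $n$ with $n|_{f_n} \ge N$ then finishes the step.

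The main obstacle is precisely this $S_e^*$ step: the adjoint intertwining can drop the exponent from $n$ down to the much smaller $n|_{f_n}$, and without \ref{item:A1} there is no guarantee that $S_e^* h$ can be pushed arbitrarily deep into the tower $V^N\H$. Once the four invariance statements for $\H^U$ are in hand, $\H^S$ inherits the analogous invariance from the standard orthogonal-complement argument, completing the proof.
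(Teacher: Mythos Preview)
Your proof is correct and close in spirit to the paper's, but the organization differs in a useful way. The paper proves separately that $\H^U$ and $\H^S$ are each invariant under $S_\mu$ (using \ref{E:CD} for $\H^U$ and \ref{E:NC} for $\H^S$), whereas you work entirely on $\H^U$, showing it is invariant under both $S_\mu$ and $S_\mu^*$, and then pass to the orthogonal complement. One concrete payoff of your route is the $S_e$-invariance step for $\H^U$: by choosing $f$ with $N\cdot f=e$ and writing $h=V^{N|_f}h'$, you land directly in $V^N\H$ without invoking assumption~\ref{item:A1}, whereas the paper's argument for that same step appeals to~\ref{item:A1}. Your use of~\ref{item:A1} is then confined to the $S_e^*$ step (via the divergence of $n|_{f_n}$, correctly argued through the periodicity of $f_n$ in $\Omega_e$), which is the exact dual of where the paper uses~\ref{item:A1} in handling $S_e\H^S\subseteq\H^S$. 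So both proofs ultimately rely on~\ref{item:A1} at the analogous place; yours is just packaged a bit more economically.
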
 

\begin{proof} Since $\H=\H^U\oplus \H^S$, it suffices to prove both $\H^U$ and $\H^S$ are invariant for $\S$. 

For $\H^U$, pick $h\in\H^U$. By definition, for each $n\in\bN$, there exists $h_n\in \H$ such that $h=V^n h_n$. 

We first observe that for any $u\in\DirectedGraph^0$, $n|_u=n$ by Condition \ref{item:S2}. Since $\bN$-action is bijective, pick $w\in\DirectedGraph^0$ such that $n\cdot w=u$, and we have $V^n S_w=S_u V^n$. Therefore, 
\[S_u h=S_u V^n h_n=V^n S_w h_n \in V^n \H.\]

Next, we fix $e\in\DirectedGraph^1$. By Assumption \ref{item:A1}, $m|_e\to\infty$ as $m\to\infty$. Therefore, for each $n\geq 1$, there exists some $n'\geq n$ and $m\in\bN$ such that $m|_{e}=n'$. Let $f\in\DirectedGraph^1$ be the unique edge such that $m\cdot f=e$. Then we have,
\[S_e h = S_e V^{n'} h_{n'} = V^m S_f h_{n'} \in V^m \H.\]
Since $m$ can be arbitrarily large, we have $S_e h$ in the range for all $V^m$ and thus $S_e h \in \H^U$. 

For $\H^S$, we first observe that for a vector $h\in\H$, $h$ belongs to a summand of $\H^S$ if and only if there exists $n\geq 1$ such that $V^{*n}h=0$, in which case $h\in V^{n-1}\left(\ker V^*\right)$.
 Now fix $h\in\H^S$ and suppose $V^{*n}h=0$. For each $u\in \DirectedGraph^0$, we can find $w\in\DirectedGraph^0$ such that $u=n\cdot w$. Therefore, by Condition \ref{E:NC}, 
\[V^{*n} S_u h = S_w V^{*n} h =0.\]

For each $e\in\DirectedGraph^1$, we use Assumption \ref{item:A1} again to pick some $m\in\bN$ such that $m|_e=n' > n$. Let $f\in\DirectedGraph^1$ be the unique edge such that $m\cdot f=e$. Then We have
\[V^{*m} S_e h = S_f V^{*n'}h = 0.\]
Therefore, $\H^S$ is invariant under $\S$. 
\end{proof}

We now further decompose each component in $\H$ obtained in Corollary \ref{C:P=G} as follows. 
For each component $\H_i$ ($i=C$ or $[v]$), let $\H_i^U:=\H^U \cap \H_i$ and $\H_i^S:=\H^S \cap \H_i$. 
This decomposes each $\H_i$ as $\H_i^U\oplus \H_i^S$, on which $\V$ is unitary or pure. Since $\H^U$ and $\H^S$ reduce $\S$, each $\H_i^U$ and $\H_i^S$ reduce $\S$ as well for all $i$. This leads to the following Wold decomposition for Toeplitz representations. 

\begin{thm}\label{thm.Wold.main} Let $(\V,\S)$ be a Toeplitz representation of a self-similar graph $(\bN, \DirectedGraph)$ on a Hilbert space $\H$. Then $\H$ decomposes into a direct sum of reducing subspaces as follows:
\[\H=\H_C^U \oplus \H_C^S \oplus \left(\bigoplus_{\Omega_v \in \DirectedGraph^0/\sim} \H_{[v]}^U \right) \oplus \left(\bigoplus_{v\in\DirectedGraph^0} \H_v^S \right).\]
When we restrict $(\V,\S)$ on these reducing subspaces, we have the following components:
\begin{enumerate}[label=(\roman*)]
    \item\label{item.Wold.main.1} \textbf{Unitary + CK}: On $\H_C^U$, $\V$ is a unitary and $\S$ is a CK $\DirectedGraph$-family.
    \item\label{item.Wold.main.2} \textbf{Pure + CK}: On $\H_C^S$, $\V$ is pure and $\S$ is a CK $\DirectedGraph$-family.
    \item\label{item.Wold.main.3} \textbf{Unitary + pure shift}:  For each equivalence class $\Omega_v$, on $\H_{[v]}^U$, $\V$ is unitary and $\S$ is a direct sum of some amplifications of $\bigoplus_{u\in\Omega_v} L_{\DirectedGraph,u}$.
    \item\label{item.Wold.main.4} \textbf{Left regular}: For each vertex $v\in \DirectedGraph^0$, on $\H_v^S$, $(\V,\S)$ is unitarily equivalent to an amplification of the left regular representation $(\lambda^\bN_v, \lambda^\DirectedGraph_v)^{(\alpha_v)}$, where $\alpha_v$ is the dimension of the space $\W_{\bN,v} = \ker V^* \cap \W_v^S$ and $\W_v^S = (S_v-\sum_{e\in vE^1} S_eS_e^*) \H_v^S$. Here, each unit vector in $\W_{\bN,v}$ is a wandering vector for $(\V,\S)$. 
\end{enumerate}
\end{thm}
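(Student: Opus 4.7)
The decomposition is largely in place already. Theorem~\ref{thm.Wold.TCK} gives $\H=\H_C\oplus\bigoplus_v\H_v$ as an $\S$-module, Proposition~\ref{prop.TCK.reduceV} upgrades this to a $\V$-reducing decomposition at the orbit level, and the proposition immediately preceding the theorem shows that the Wold pieces $\H^U,\H^S$ of the isometry $V=V_1$ reduce $\S$. My first step is to prove compatibility of the two Wold decompositions: $\H^U\cap\H_{[v]}=\bigoplus_{u\in\Omega_v}\H_u^U$, and similarly for $\H^S$. Given $h=\sum_u h_u\in\H^U\cap\H_{[v]}$, I would write $h=V^n h^{(n)}$, decompose $h^{(n)}=\sum_u h^{(n)}_u$, and use (SS) to conclude $V^n\H_u\subseteq\H_{n\cdot u}$; because the $\bN$-action is a finite-order permutation of $\Omega_v$, each component $h_w$ equals $V^n h^{(n)}_{u_n}$ for the unique $u_n$ with $n\cdot u_n=w$, placing $h_w$ in $\bigcap_n V^n\H=\H^U$. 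The $\H^S$ analogue then follows by pairing against $\H^U$ and using what was just proved.

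With compatibility in hand, (i) and (ii) are immediate from the definitions. For (iii), $V$ being unitary on $\H^U$ makes $V\colon\W_u^U\to\W_{1\cdot u}^U$ a bijection (with inverse $V^*$), so $\dim\W_u^U$ is constant along the orbit, yielding the uniform amplification index $\alpha_{[v]}$.

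The substantive case is (iv). My plan is: \emph{(a)} show every unit $\xi\in\W_{\bN,v}$ is a wandering vector for $(\V,\S)$; \emph{(b)} show that, for orthonormal bases $\{\xi_{v,i}\}_i$ of $\W_{\bN,v}$ ($v\in E^0$), the cyclic subspaces $\H_{\xi_{v,i}}=\overline{\spn}\{S_\mu V^p\xi_{v,i}\}$ are mutually orthogonal; and \emph{(c)} show that their direct sum fills $\bigoplus_v\H_v^S$. For (a), I expand $\langle S_\mu V^p\xi,S_\nu V^q\xi\rangle=\langle\xi,V^{*p}S_\mu^*S_\nu V^q\xi\rangle$ and split on the prefix relation between $\mu$ and $\nu$. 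When $\mu=\nu$, (NC) converts $V^{*p}S_{s(\mu)}$ into $S_vV^{*p}$ (using $s(\mu)=p\cdot v$), so the inner product reduces to $\langle\xi,V^{*p}V^q\xi\rangle$, which vanishes for $p\ne q$ because $V^*\xi=0$. When one of $\mu,\nu$ is a proper extension of the other (say $\nu=\mu\nu'$ with $|\nu'|\ge1$), (NC) rewrites $V^{*p}S_{\nu'}$ as $S_\tau V^{*p|_\tau}$ with $p\cdot\tau=\nu'$, and the inner product becomes $\langle S_\tau^*\xi,V^{*p|_\tau}V^q\xi\rangle$; this vanishes because $|\tau|\ge1$ combined with $\xi\in\W_v\subseteq S_v\H$ forces $S_\tau^*\xi=0$. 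The incomparable case is trivial. Step (b) is the same calculation with $\langle\xi,\xi\rangle$ replaced by $\langle\xi_i,\xi_j\rangle=0$.

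For (c), I would first prove the recursion $\W_v^S=\W_{\bN,v}\oplus V\W_u^S$ (with $1\cdot u=v$) by decomposing $h=(h-VV^*h)+VV^*h$ and using that $V^*$ sends $\W_v^S$ into $\W_u^S$ (from Proposition~\ref{prop.TCK.reduceV}). Iterating yields $\W_v^S=\bigoplus_{n\ge0}V^n\W_{\bN,u_n}$, where $u_n$ is the $n$-th $1$-predecessor of $v$; the tail $V^N\W_{u_N}^S$ shrinks to $\{0\}$ because any common element lies in $\bigcap_N V^N\H\cap\H^S\subseteq\H^U\cap\H^S=\{0\}$. Combining with $\H_v^S=\bigoplus_{\mu\in E^*v}S_\mu\W_v^S$ shows that the span of the $S_\mu V^p\xi_{u,i}$'s exhausts $\bigoplus_v\H_v^S$, so the intertwining map $\delta_{\mu,p}\otimes\xi_{u,i}\mapsto S_\mu V^p\xi_{u,i}$ realizes the desired unitary equivalence with $\bigoplus_v(\lambda_v^\bN,\lambda_v^E)^{(\alpha_v)}$. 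The hardest step is (a) together with (c): the case analysis via (NC) in (a), and the iterated Wold-type decomposition of $\W_v^S$ in (c) that actually ensures the cyclic copies cover the entire shift part.
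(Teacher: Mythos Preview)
Your proposal is correct and follows essentially the same architecture as the paper's proof: intersect the graph-Wold decomposition of $\S$ with the classical Wold decomposition of $V$, check that the pieces reduce both families, and then identify each summand. The paper's own proof is terser --- it simply asserts that each $\xi\in\W_{\bN,v}$ is wandering and that the cyclic subspaces exhaust $\bigoplus_v\H_v^S$ --- whereas you actually carry out the (NC)-based case analysis for the wandering property and the iterated Wold recursion $\W_v^S=\W_{\bN,v}\oplus V\W_u^S$ for exhaustion. These are genuine details the paper glosses over, and your treatment of them is sound.

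Two minor remarks. First, your compatibility step ($\H^U\cap\H_{[v]}=\bigoplus_{u\in\Omega_v}\H_u^U$) is correct --- the claim $V^n\H_u\subseteq\H_{n\cdot u}$ holds because $(n|_\mu)\cdot u=n\cdot u$ whenever $s(\mu)=u$, by repeatedly applying (S4) --- but there is a shorter route: since $\H^U$ reduces $\S$, the projection onto $\H^U$ commutes with the projection onto $\H_u$ (which is built from $\S$-operators), and the intersection distributes over the direct sum immediately. Second, in part~(c) the paper applies the classical Wold decomposition directly to $V$ on the aggregate space $\W^S=\bigoplus_v\W_v^S$ (which reduces $V$), obtaining $\W^S=\bigoplus_{n\ge 0}V^n\bigl(\bigoplus_v\W_{\bN,v}\bigr)$ in one stroke; your vertex-by-vertex recursion reaches the same conclusion but with slightly more bookkeeping, since you must track the cycling of $V$ through the orbit.
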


\begin{proof} We have proved that each of these subspaces reduces both $\V$ and $\S$. Moreover, since $\S$ is a CK $\DirectedGraph$-family on $\H_C$, both \ref{item.Wold.main.1} and \ref{item.Wold.main.2} follow immediately.

For \ref{item.Wold.main.3}, $\S$ restricted on $\H_{[v]}^U$ is unitarily equivalent to $\bigoplus_{u\in\Omega_v} L_{\DirectedGraph,u}^{(\alpha_u)}$,
where $\alpha_u$ is the dimension of the wandering subspace $\W_u^U = (S_u-\sum_{e\in uE^1} S_eS_e^*)\H_u^U$. It suffices to show that $\alpha_u$ is constant among all $u\in\Omega_v$. By Proposition \ref{prop.TCK.reduceV}, $V\W_u^U \subset W_{1\cdot u}^U$ and $\W_{[v]}^U=\oplus_{u\in\Omega_v} \W_u^U$ reduces $V$. But since $V$ is unitary on $\H^U$, $V$ is also a unitary on $\W_{[v]}^U$ and thus $V$ is unitary from each $\W_u^U$ to $\W_{1\cdot u}^U$. Hence, the dimensions of $\W_u$ are constant among $u\in \Omega_v$. 

For \ref{item.Wold.main.4}, the orthogonal complement of the direct sum of the first three complements is $\H^S\cap \left(\bigoplus_{v\in\DirectedGraph^v} \H_v \right)$. Let $\H_v^S=\H^S \cap \H_v$ and $\W_v^S = (S_v-\sum_{e\in vE^1} S_eS_e^*) \H_v^S$. By Proposition \ref{prop.TCK.reduceV}, the space $\W^S = \oplus_{v\in\DirectedGraph^0} \W_v^S$ reduces $V$. Therefore, let $\W_{\bN,v}=\ker V^* \cap \W_v^S$ and we have that $\oplus_{v\in \DirectedGraph^0} \W_{\bN,v}=\ker V^* \cap \W^S$ and $\W^S = \bigoplus_{n\geq 0} V^n (\oplus_{v\in\DirectedGraph^*} \W_{\bN,v})$. Therefore, we have that 
\[\H^S \cap \left(\bigoplus_{v\in\DirectedGraph^0} \H_v \right) = \bigoplus_{\mu\in\DirectedGraph^*} S_\mu \W^S = \bigoplus_{v\in\DirectedGraph^0}\left(\bigoplus_{\mu\in\DirectedGraph^*}\bigoplus_{n\geq 0} S_\mu V^n \W_{\bN,v}\right).\]
Each unit vector in $\W_{\bN,v}$ is a wandering vector for $(\V,\S)$, and one can check that it generates a reducing subspace for $(\V,\S)$, on which $(\V,\S)$ is unitarily equivalent to a copy of $(\lambda_v^\bN, \lambda_v^\DirectedGraph)$.  Therefore, one can verify that $(\V,\S)$ restricted on each $\H_v^S=\bigoplus_{\mu\in\DirectedGraph^*}\bigoplus_{n\geq 0} S_\mu V^n \W_{\bN,v}$ is unitarily equivalent to $\dim \W_{\bN,v}$-copies of $(\lambda_v^\bN, \lambda_v^\DirectedGraph)$.
\end{proof}

For the unitary + pure shift type representations, we make the following key observation, which will be used later. 

\begin{lem}\label{lemma:key_lemma} Let $\S$ be a pure shift type TCK $\DirectedGraph$-family on a Hilbert space $\H$ and let \[\W=(I-\sum_{e\in E^1} S_e S_e^*)\H\] be its wandering space. 

Suppose $U_0$ is a unitary on $\W$ such that $U_0$ is a unitary map from $S_v \W$ to $S_{1\cdot v} \W$ for all $v\in E^0$. Then there exists a unique unitary + pure shift type representation $(\mathcal{U}, \S)$ on $\H$ such that $U|_\W = U_0$. 
\end{lem}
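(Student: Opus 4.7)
The plan is to build $U$ explicitly on each piece of the Fock-type decomposition $\H = \bigoplus_{\mu \in E^*} S_\mu \W$ via the formula
\[
U(S_\mu w) := S_{1\cdot \mu}\, U_0^{1|_\mu}\, w \qquad (w \in S_{s(\mu)}\W),
\]
and then to verify this defines a unitary satisfying \ref{E:CD} and \ref{E:NC}. This formula is essentially forced by uniqueness: any extension $U$ satisfying \ref{E:CD} must obey $U S_\mu = S_{1\cdot \mu} U^{1|_\mu}$ (by iterating \ref{E:CD} to $E^*$), and the assumption that $U_0(S_v\W) = S_{1\cdot v}\W$ combined with $1|_v = 1$ (from \ref{cond:S2}) shows $U$ must preserve $\W$, so $U^{1|_\mu} w = U_0^{1|_\mu} w$ for $w \in \W$. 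This automatically yields uniqueness once existence is established.

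For existence, the key observation is that iterating the hypothesis on $U_0$ shows $U_0^k$ is a unitary from $S_v\W$ onto $S_{k\cdot v}\W$ for every $v \in E^0$ and $k \ge 0$. Moreover, the identity $1|_\mu \cdot s(\mu) = 1 \cdot s(\mu) = s(1\cdot \mu)$---which follows by applying the source map to the factorization $1 \cdot \mu = 1 \cdot (\mu\, s(\mu)) = (1\cdot \mu)(1|_\mu \cdot s(\mu))$ from \ref{item:S1}---ensures $U_0^{1|_\mu} w \in S_{s(1\cdot \mu)}\W$, so the prescription lands in $S_{1\cdot \mu}\W$. Since $\mu \mapsto 1\cdot \mu$ is a bijection on $E^*$, summing the partial unitaries $S_\mu\W \to S_{1\cdot \mu}\W$ produces a unitary $U$ on $\H$, and the identity $U|_\W = U_0$ follows by taking $\mu = v \in E^0$ and using $1|_v = 1$.

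The remaining work is to verify \ref{E:CD}. A short induction using \ref{item:S5} gives $U^k S_\mu = S_{k\cdot \mu}\, U_0^{k|_\mu}$ on $S_{s(\mu)}\W$ for every $k \ge 0$. With this in hand, both sides of $U S_\nu = S_{1\cdot \nu} U^{1|_\nu}$ applied to $h = \sum_\mu S_\mu w_\mu$ expand as sums whose non-vanishing terms match via $1\cdot(\nu\mu) = (1\cdot \nu)(1|_\nu \cdot \mu)$ and $1|_{\nu\mu} = (1|_\nu)|_\mu$. The subtle point---and what I expect to be the main obstacle---is matching when each side vanishes: the left side has $S_\nu S_\mu = 0$ unless $r(\mu) = s(\nu)$, while the right side has $S_{1\cdot \nu} S_{(1|_\nu)\cdot \mu} = 0$ unless $1\cdot s(\nu) = 1|_\nu \cdot r(\mu)$. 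These vanishing patterns coincide because $v \mapsto 1|_\nu \cdot v$ is a bijection of $E^0$ (since $1|_\nu \in \bN$ acts as a graph automorphism) together with $1|_\nu \cdot s(\nu) = 1 \cdot s(\nu)$. Once \ref{E:CD} is established, \ref{E:NC} follows at no extra cost from Proposition~\ref{prop.automatic.NC}, since $U$ is a unitary.
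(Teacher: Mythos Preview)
Your proposal is correct and follows essentially the same route as the paper: define $U$ on each summand $S_\mu\W$ by $U(S_\mu w)=S_{1\cdot\mu}U_0^{1|_\mu}w$, verify it is a unitary because $\mu\mapsto 1\cdot\mu$ permutes $E^*$ and each block map is unitary, check \ref{E:CD} via the identities $1\cdot(\nu\mu)=(1\cdot\nu)(1|_\nu\cdot\mu)$ and $1|_{\nu\mu}=(1|_\nu)|_\mu$, invoke Proposition~\ref{prop.automatic.NC} for \ref{E:NC}, and deduce uniqueness from the fact that the defining formula is forced. You are slightly more careful than the paper in explicitly matching the vanishing patterns of $S_\nu S_\mu$ and $S_{1\cdot\nu}S_{(1|_\nu)\cdot\mu}$, but this is a cosmetic difference rather than a different strategy.
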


\begin{proof} From the Wold decomposition of $\S$, we have that $\H=\bigoplus_{\mu\in E^*} S_\mu \W$. For each $\mu\in E^*$ and $\xi\in\W$, define $U (S_\mu \xi) = S_{1\cdot \mu} U_0^{1|_\mu} \xi$. Now $S_\mu \xi=S_\mu S_{s(\mu)} \xi$ and by the assumption of $U_0$ and Condition~\ref{cond:S4}, we have $U_0^{1|_\mu} \xi \in S_{1\cdot s(\mu)} \xi$. Since $U_0$ is unitary from $S_{s(\mu)}\W$ to $S_{1\cdot s(\mu)} \W$, it is clear that $U$ is unitary from $S_\mu \W$ to $S_{1\cdot \mu} \W$, and thus $U$ is unitary on $\H$. Moreover, $U|_W=U_0$. One can easily verify that $U^k (S_\mu\xi)=S_{k\cdot \mu} U_0^{k|_\mu} \xi$ for $k\geq 1$. 

For each $v\in E^0$, $\mu\in E^*$, and $\xi\in \W$, we have
\begin{align*}
    U S_v S_\mu \xi &= \begin{cases} U S_\mu \xi, & \text{if }r(\mu)=v \\
    0, & \text{ otherwise}\end{cases} \\
    &= \begin{cases} S_{r(1\cdot \mu)} S_{1\cdot \mu} U_0^{1|_\mu} \xi, & \text{if }r(\mu)=v \\
    0, & \text{ otherwise}\end{cases} \\
    &= \begin{cases} S_{1\cdot r(\mu)}  U S_\mu \xi, & \text{if }r(\mu)=v \\
    0, & \text{ otherwise}\end{cases}\\
    &= S_{1\cdot v}  U S_\mu \xi.
\end{align*}
Since $\spn\{S_\mu\xi\}$ is dense in $\H$, we have $US_v=S_{1\cdot v} U$. 

For any $\mu, \nu\in E^*$ and $\xi\in \W$, we have
\begin{align*}
    US_\mu (S_\nu \xi) &= S_{1\cdot (\mu\nu)} U_0^{1_{\mu\nu}} \xi \\
    &= S_{1\cdot \mu} S_{1|_\mu \cdot \nu} U_0^{(1|_\mu)|_\nu} \xi \\
    &= S_{1\cdot \mu} U^{1_\mu} (S_\nu \xi).
\end{align*}
Therefore, $US_\mu=S_{1\cdot \mu} U^{1|_\mu}$. Since $U$ is unitary, we have by Proposition~\ref{prop.automatic.NC}, $(\U,\S)$ is a unitary + pure shift type representation on $\H$. The uniqueness follows from that $U$ is uniquely determined by the formula 
\[U (S_\mu \xi) = S_{1\cdot \mu} U^{1|_\mu} \xi = S_{1\cdot \mu} U_0^{1|_\mu} \xi. \qedhere \]
\end{proof}

\section{Atomic representations}

\begin{defn} A Toeplitz representation $(V,\S)$ of a self-similar graph $(\bN, E)$ on a Hilbert space $\H$ is called \emph{atomic} if there exists an orthonormal basis $\{\xi_i\}_{i\in I}$ for $\H$ satisfying the following properties: 
\begin{enumerate}
    \item There exists an injective map $\nu:I\to I$ such that for each $i\in I$, $V \xi_i=\lambda_i \xi_{\nu(i)}$ for some $\lambda_i\in \bT$.
    \item The set $I$ can be partitioned as a disjoint union of $I=\bigcup_{v\in E^0} I_v$ such that each $S_v$ is the orthogonal projection onto $\overline{\spn}\{\xi_i, i\in I_v\}$.
    \item For each $e\in E^1$, there exists an injective map $\pi_e:I_{s(e)} \to I_{r(e)}$ such that for each $i\in I_{s(e)}$,  $S_e \xi_i = \lambda_{e,i} \xi_{\pi_e(i)}$ for some $\lambda_{e,i}\in\bT$.
\end{enumerate}
\end{defn}

We call vectors $\xi_i$ \emph{atomic vectors} and the basis $\{\xi_i\}_{i\in I}$ an \emph{atomic basis}. Notice that the $S_e$'s have orthogonal ranges, and thus the ranges of $\{\pi_e\}_{e\in E^1}$ must be disjoint. 

If we only focus on the $\S$-part of the representation, then $\{S_v, S_e: v\in \DirectedGraph^0, e\in \DirectedGraph^1\}$ defines an atomic TCK $\DirectedGraph$-family as defined in \cite{DDL2019}. When the graph has a single vertex and $n$ edges, such representation was characterized by Davidson and Pitts in \cite{DP1999} in their study of free semigroup algebra. Their result was extended to atomic TCK $\DirectedGraph$-families in \cite{DDL2019}. Roughly speaking, every atomic TCK 
$\DirectedGraph$-family decomposes as a direct sum of one of the following three types:
\begin{enumerate}
    \item The left regular type: they are precisely $L_{E,v}$'s from Example \ref{ex.LEv}.
    \item The cycle type: when there exist $\xi_i$ and a directed path $\mu$ such that $S_\mu \xi_i=\lambda \xi_i$ for some scalar $\lambda\in \bT$.
    \item The inductive type: when every $\xi_i$ can be traced backwards in an infinite path. More precisely, we can find an infinite path $e_1e_2\cdots$ and distinct basic vectors $\xi_i=\xi_0, \xi_{-1}, \xi_{-2}, \cdots$, such that $S_{e_k} \xi_{-k} = \xi_{-k+1}$ for all $k\geq 1$. 
\end{enumerate}

Characterizing all atomic representations of a self-similar graph $(\mathbb{N},E)$ is a difficult task. We are only able to do so when the representation of the graph is pure. However, in the case of the odometer semigroups, we can fully characterize all atomic representations. 

\subsection{Atomic representations with pure shift $E$-representation}\label{sec.atomic.pureE}

Suppose that $(\V, \S)$ is a Toeplitz representation of a self-similar graph $(\mathbb{N},\DirectedGraph)$ where $\S$ is a pure shift. By Theorem~\ref{thm.Wold.main}, either $\V$ is unitary or $(\V,\S)$ is a direct sum of some amplifications of $(\lambda_v^\bN, \lambda_v^E)$. It suffices to consider the former case (since all representations of the latter case are already classified). 

We first define the archetypal atomic representation where $\V$ is unitary and $\S$ is pure. Let $u \in\DirectedGraph^0$ and let its orbit under the $\mathbb{N}$-action be $\Omega_v=\{u_1,u_2,\dots,u_m\}$, where $u=u_1$ and $1\cdot u_i=u_{i+1}$ (and by convention, $u_{m+1}=u_1$). 

Let $\H$ be the Hilbert space with an orthonormal basis $\spn\{\xi_{i,\mu}: s(\mu)=u_i, 1\leq i\leq m\}$. Define $S_e \xi_{i,\mu} = \xi_{i, e\mu}$ whenever $s(e)=r(\mu)$. One may notice that $\S$ is unitary equivalent to $\oplus_{i=1}^m L_{\DirectedGraph, u_i}$. 

Fix $\lambda\in\mathbb{T}$. First define a unitary $V$ on the wandering vectors $\{\xi_{i,u_i}: 1\leq i\leq m\}$ by
\[
V\xi_{i,u_i} = \begin{cases}
\xi_{i+1, u_{i+1}}, & \text{ if } 1\leq i<m \\
\lambda \xi_{1,\mu}, & \text{ if } i=m
\end{cases}
\]
By Lemma \ref{lemma:key_lemma}, 
there is a unique unitary $V$ on $\H$ such that $(\V,\S)$ is a unitary + pure representation. We call denote this representation by $c^\lambda_{u}=(V_{c,u}^\lambda, S_{c,u}^\lambda)$. It is not hard to see that for a fixed $u$, different values of $\lambda$ give non-equivalent atomic representations. Indeed, the wandering vectors are eigenvectors with eigenvalue $\lambda$ for $c^\lambda_u$. 

If $u,v$ are in the same orbit, $c^\lambda_u$ and $c^\lambda_v$ are actually unitarily equivalent. This can be done by re-scaling the wandering vectors appropriately. Therefore, with a slight abuse of notation, we use $c^\lambda_u$ to denote the class of atomic representations $c^\lambda_v$, $v\in\Omega_u$.  

Now, the main goal for the rest of this subsection is to prove that any unitary + pure shift atomic representation is a direct sum (or integral) of these $c^\lambda_u$. In other words, these are all the irreducible atomic representations of the unitary + pure shift type. 

\begin{prop} 
\label{P:atomic}
Let $(\V,\S)$ be a unitary + pure shift type atomic representation of a self-similar graph $(\mathbb{N},\DirectedGraph)$ on a Hilbert space $\H$. Then $(\V,\S)$ decomposes as a direct sum or direct integral of some $c^\lambda_u$'s. 
\end{prop}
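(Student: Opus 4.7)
The plan is to reduce the classification to a spectral decomposition of $V^m$ on a single wandering subspace per orbit class. First, because $\S$ is a pure shift, every atomic basis vector $\xi_i$ is (up to a unimodular scalar) of the form $S_\mu\xi_j$ for a unique wandering atomic vector $\xi_j\in\W=(I-\sum_{e\in E^1}S_eS_e^*)\H$ and a unique path $\mu$. Let $J\subset I$ index the wandering atomic vectors, and partition $J=\bigsqcup_{v\in E^0}J_v$ with $J_v=J\cap I_v$. Proposition~\ref{prop.TCK.reduceV} tells us that $V\W_v=\W_{1\cdot v}$, so the atomic bijection $\nu:I\to I$ restricts to a bijection $J_v\to J_{1\cdot v}$, respecting the orbit structure of the $\bN$-action on $E^0$.

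Next, I fix an orbit class $\Omega_v=\{u_1,\dots,u_m\}$ with $u_{i+1}=1\cdot u_i$ (indices mod $m$), and focus on $V^m|_{\W_{u_1}}$, which is a unitary acting on the atomic basis $\{\xi_j:j\in J_{u_1}\}$ as a permutation twisted by phases. By the spectral theorem, it admits a spectral decomposition
\[
V^m|_{\W_{u_1}} \;\cong\; \int_{\bT}^{\oplus} \lambda\, dE(\lambda),
\]
with an accompanying direct integral $\W_{u_1}\cong\int_{\bT}^{\oplus}\W_{u_1}(\lambda)\,d\mu_E(\lambda)$, whose fiber $\W_{u_1}(\lambda)$ is a generalized eigenspace of $V^m$ at $\lambda\in\bT$. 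For each measurable section $\lambda\mapsto\xi(\lambda)\in\W_{u_1}(\lambda)$, setting $\xi_i(\lambda):=V^{i-1}\xi(\lambda)\in\W_{u_i}$ for $1\le i\le m$ yields
\[
V\xi_i(\lambda)=\xi_{i+1}(\lambda)\ (i<m),\qquad V\xi_m(\lambda)=\lambda\,\xi_1(\lambda),
\]
which are precisely the defining wandering-vector relations for $c^\lambda_{u_1}$.

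To close the argument, the subspace $\K(\lambda):=\ol{\spn}\{S_\mu\xi_i(\lambda):\mu\in E^*,\,1\le i\le m\}$ is reducing for $(\V,\S)$, the restriction of $\S$ to it is a pure shift whose wandering space is spanned by the $\xi_i(\lambda)$, and $V$ restricted to that wandering space matches the wandering action of $c^\lambda_{u_1}$; the uniqueness part of Lemma~\ref{lemma:key_lemma} then delivers $\K(\lambda)\cong c^\lambda_{u_1}$. Assembling the fibers over the spectral measure and summing over all orbit classes in $E^0$ gives the asserted decomposition. The main technical obstacle is carrying out the direct integral measurably: infinite orbits of $\nu^m$ on $J_{u_1}$ give rise to bilateral-shift summands of $V^m|_{\W_{u_1}}$ whose spectrum fills the full circle $\bT$, so while finite orbits contribute ordinary direct sums of $c^\lambda_{u_1}$'s, the continuous-spectrum portion genuinely requires a direct integral.
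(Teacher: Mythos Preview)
Your approach is correct and shares the same core idea as the paper's: both reduce to a single orbit class $\Omega_v=\{u_1,\dots,u_m\}$ and then diagonalize the unitary $V^m|_{\W_{u_1}}$. The paper, however, exploits the atomic hypothesis more concretely. It first observes that each $V$-orbit on the atomic basis of $\W_{[v]}$ generates a reducing subspace for $(\V,\S)$, so one may assume $V$ has a single cycle of length $m\alpha$ on that basis; after relabelling and rescaling, $V^m|_{\W_{u_1}}$ becomes a cyclic shift on $\{\xi_{1,1},\dots,\xi_{1,\alpha}\}$ with a single phase $\lambda$, and the eigenvectors are written down explicitly as a discrete Fourier transform $\eta_{k,1}=\sum_{j=1}^{\alpha}(\beta\omega_k)^{j-1}\xi_{1,j}$ with $\beta^\alpha=\bar\lambda$ and $\omega_k$ ranging over $\alpha$-th roots of unity. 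Your abstract spectral-theorem route handles the infinite-$\alpha$ case more uniformly (the paper only sketches that case) and, notably, shows that atomicity is not essential to the conclusion: any unitary + pure shift representation decomposes this way once one invokes Lemma~\ref{lemma:key_lemma}. The one place to be careful is your fiber-by-fiber language: the assertions that ``$\K(\lambda)$ is reducing'' and that ``the restriction of $\S$ to it is a pure shift'' should be read as statements about decomposable operators in the direct integral $\int^\oplus$ rather than literal restrictions, since for $\lambda$ outside the point spectrum the $\xi_i(\lambda)$ are not genuine vectors in $\H$; the uniqueness in Lemma~\ref{lemma:key_lemma} then has to be applied fiberwise after checking that $V$ and each $S_e$ commute with the diagonal algebra generated by the spectral projections of $V^m|_{\W_{u_1}}$ (transported to all of $\H_{[v]}$), which is routine.
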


\begin{proof} For $u\in \DirectedGraph^0$, let $\W_u=(S_u-\sum_{e\in u\DirectedGraph^1} S_eS_e^*)\H$. 
For each equivalence class $\Omega_v$, define $\W_{[v]}=\oplus_{u\in\Omega_v} \W_u$ and $\H_{[v]}=\bigoplus_{\mu\in \DirectedGraph^*} S_\mu \W_{[v]}$. By Theorem~\ref{thm.Wold.main}, each $\H_{[v]}$ reduces $(\V,\S)$ and on $\H_{[v]}$, $\S$ is unitarily equivalent to $\left(\bigoplus_{u\in\Omega_v} L_{\DirectedGraph,u}\right)^{(\alpha)}$ where $\alpha=\dim \W_u$. 

It suffices to prove that on each $\H_{[v]}$, $(\V,\S)$ is a direct sum or integral of some $c^\lambda_v$. We first consider the case when $\alpha<\infty$. Let $\Omega_v=\{v_1,v_2,\ldots,v_m\}$ where $1\cdot v_i=v_{i+1}$. Pick an atomic orthonormal basis for $\H_{[v]}$. 
Since such an orthonormal basis also makes $\S$ an atomic representation, we can always pick some basic vectors $\{\xi_{i,j}: 1\leq j\leq \alpha\}$ that spans $\W_{v_i}$. 
Without loss of generality, $V$ cycles through this entire basis (i.e. $\spn\{V^n \xi\}=\oplus_{i=1}^m \W_{v_i}$ for any basic vector $\xi$ among $\xi_{i,j}$). Otherwise, each $\V$-orbit on $\xi_{i,j}$'s generates a reducing subspace for $(\V,\S)$. 

Since $V\W_v=\W_{1\cdot v}$, we have $V\xi_{i,j}=\lambda_{i,j}\xi_{i+1, j'}$ for some $j'$ and $\lambda_{i,j}\in\mathbb{T}$. We can always reorder the $j$'s so that $j'=j$ when $1\leq i<m$; and $j'=j+1$ when $i=m$. We can also re-scale the orthonormal basis so that $\lambda_{i,j}=1$ except $\lambda_{m,\alpha}=\lambda\in\mathbb{T}$. As an illustration, this is summarized in the following diagram.  

\begin{figure}[h]
    \centering

    \begin{tikzpicture}[scale=1, every node/.style={scale=1}]
    \foreach \x in {0,1,3}{
    \foreach \y in {0,1,3}{
        \node at (\x*2, \y*2) {$\bullet$};
        \node at (4, \y*2) {$\cdots$};
    }
    \node at (\x*2, 4) {$\vdots$};
    }
    \draw  [-{Stealth[length=2mm, width=1mm]}] plot [smooth] coordinates {(0,0) (1,-0.1) (2,0)};
    \draw  [-{Stealth[length=2mm, width=1mm]}] plot [smooth] coordinates {(2,0) (2.7,-0.1) (3.5,0)};
    \draw  [-{Stealth[length=2mm, width=1mm]}] plot [smooth] coordinates {(4.5,0) (5.2,-0.1) (6,0)};
    \draw  [-{Stealth[length=2mm, width=1mm]}] plot [smooth] coordinates {(6,0) (5.9, 0.5) (0.1,1.5) (0,2)};
    
    \draw  [-{Stealth[length=2mm, width=1mm]}] plot [smooth] coordinates {(0,2) (1,2-0.1) (2,2)};
    \draw  [-{Stealth[length=2mm, width=1mm]}] plot [smooth] coordinates {(2,2) (2.7,2-0.1) (3.5,2)};
    \draw  [-{Stealth[length=2mm, width=1mm]}] plot [smooth] coordinates {(4.5,2) (5.2,2-0.1) (6,2)};
    \draw  [-{Stealth[length=2mm, width=1mm]}] plot [smooth] coordinates {(6,2) (5.9, 2.5) (3,3)};
    
    \draw  [-{Stealth[length=2mm, width=1mm]}] plot [smooth] coordinates {(0,6) (1,6-0.1) (2,6)};
    \draw  [-{Stealth[length=2mm, width=1mm]}] plot [smooth] coordinates {(2,6) (2.7,6-0.1) (3.5,6)};
    \draw  [-{Stealth[length=2mm, width=1mm]}] plot [smooth] coordinates {(4.5,6) (5.2,6-0.1) (6,6)};
    \draw  [-{Stealth[length=2mm, width=1mm]}] plot [smooth] coordinates {(3,5) (0.1,5.5) (0,6)};
    
    \draw plot [smooth] coordinates {(6,6) (5.9, 6.5) (0.1,6.5)};
    \draw plot [smooth] coordinates {(0.1,6.5) (-0.42,6.32) (-0.5,5.9)};
    \draw  [-{Stealth[length=2mm, width=1mm]}] plot [smooth] coordinates {(-0.5,5.9) (-0.5,0.1) (0,0)};
    
    \node at (0.2, -0.3) {$\xi_{1,1}$};
    \node at (2.2, -0.3) {$\xi_{2,1}$};
    \node at (6.2, -0.3) {$\xi_{m,1}$};
    
    \node at (0.2, 2-0.3) {$\xi_{1,2}$};
    \node at (2.2, 2-0.3) {$\xi_{2,2}$};
    \node at (6.2, 2-0.3) {$\xi_{m,2}$};
    
    \node at (0.2, 6-0.3) {$\xi_{1,\alpha}$};
    \node at (2.2, 6-0.3) {$\xi_{2,\alpha}$};
    \node at (6.2, 6-0.3) {$\xi_{m,\alpha}$};
    
    \node at (-0.5,6.5) {$\lambda$};
    \end{tikzpicture}
    \label{fig:B.sB}
\end{figure}

Fix $\beta\in\mathbb{T}$ where $\beta^\alpha=\overline{\lambda}$, and for each $\alpha$-root of unity $\omega_k$ (i.e. $\omega_k^\alpha=1$), $1\leq k\leq\alpha$, define 
\[\eta_{k,1} = \sum_{j=1}^\alpha (\beta \omega_k)^{j-1} \xi_{1,j}.\]
For each $1<i\leq m$, let $\eta_{k,i} = V^{i-1} \eta_{k,1}$. We can compute that 
\begin{align*}
V\eta_{k,m} &= V^m \eta_{k,1} \\
&= \sum_{j=1}^{\alpha-1} (\beta \omega_k)^{j-1} \xi_{1,j+1} + (\beta\omega_k)^{\alpha-1} \lambda \xi_{1,1}  \\
&= \frac{1}{\beta\omega_k} \eta_{1,1}.
\end{align*}
Therefore, by setting $\H_{k}=\overline{\spn}\{S_\mu \eta_{k,i}: 1\leq i\leq m, \mu\in\DirectedGraph^*\}$, we have $\H_k$ reduces $(\V,\S)$, on which $(\V,\S)$ is unitarily equivalent to the atomic representation $c^{\overline{\beta\omega_k}}_v$. It is routine to check $\H_k$ ($1\leq k\leq \alpha$) are pairwise orthogonal to each other and their direct sum is $\H_{[v]}$. Therefore, $(\V,\S)=\oplus_{k=1}^\alpha c^{\overline{\beta\omega_k}}_v$. 

For the case when $\alpha=\infty$, one can follow a similar argument to prove that $(\V,\S)=\int^\oplus_{\beta\in \mathbb{T}} c^\beta_v d\beta$. 
\end{proof}

\subsection{Atomic representations of odometer semigroups}

The classification of all atomic representations of self-similar graphs is difficult. However, we can character all irreducible atomic representations in the case of odometer semigroups studied in \cite{BLi2022}. 

Let $(\V,\S)$ be an atomic representation of the odometer semigroup $\mathbb{O}_n$. As a reminder, $\S$ is given by a row isometry $(S_1,\ldots,S_n)$ and $\V$ is determined by a single isometry $V$. They satisfy the relations 
\begin{align*}
    V S_k &= S_{k+1}, \ 1\leq k < n, \\
    V S_n &= S_1 V, \\
    V^* S_1 &= S_n V^*.
\end{align*}

The characterization when $\S$ is pure (i.e., Proposition \ref{P:atomic}) applies to all Toeplitz representations of self-similar graphs. It remains to consider the cases when $\S$ is a CK $\DirectedGraph$-family, in which case, $\sum_{i=1}^n S_i S_i^*=I$. 
By \cite[Theorem 3.4]{DP1999}, there are two possibilities for such $\S$. 
\begin{enumerate}
    \item $\S$ is of the inductive type: there is an infinite sequence of distinct basic vectors $\{\xi_{-n}: n\geq 0\}$ and an infinite word $\cdots e_n e_{n-1} \cdots e_1$ such that $S_{e_n} \xi_{-n} = \xi_{-(n-1)}$. Here the infinite word is aperiodic.
    \item $\S$ is of the cycle type: there exists a primitive word $e_1e_2\cdots e_n$, basic vectors $\{\xi_i: 1\leq i\leq n\}$, and a scalar $\lambda\in\mathbb{T}$ such that $S_{e_i} \xi_i = \xi_{i+1}$ for $1\leq i<n$ and $S_{e_n} \xi_n=\lambda \xi_1$. 
\end{enumerate}
Here, an infinite word is aperiodic if it is not tail equivalent to an infinite periodic word. A finite word is called primitive if it is not a power of some smaller words. 

We first make the following observation. Suppose $\mu$ is not all $n$, then $1|_\mu=0$. Indeed, by Condition~\ref{item:S3}, $1|_{n\mu}=(1|_n)|_\mu=1_\mu$ and $1|_{i\mu}=(1|_i)|_\mu=0|_\mu=0$ for $i\neq n$. Therefore, $1|_\mu=0$ as long as $\mu$ is not all $n$. 

Now suppose $\xi\in\H$ has $\xi=S_\mu \xi_0$ for some $\mu$ that is not all $n$, then $V\xi = S_{1\cdot \mu} \xi_0$. Therefore, the value of $V$ is uniquely determined for such a vector $\xi$. Notice that when $\S$ is an irreducible inductive type atomic representation, each basic vector $\xi$ is in the range of some $S_\mu$ where $\mu$ is not all $n$. Therefore, $V\xi$ is uniquely determined in such a case. We observe that this uniquely determines a unitary + CK type atomic representation.

\begin{prop}\label{prop.inductive} When $\S$ is an irreducible inductive type atomic representation on $\H$ with respect to the orthonormal basis $\{\xi_i\}_{i\in I}$. There exists a unique unitary $V$ on $\H$ such that $(\V,\S)$ is a unitary + CK type atomic representation. 
\end{prop}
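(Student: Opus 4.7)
The plan is to exploit the observation immediately preceding the statement: whenever $\mu \in E^*$ is not the constant word $n^{|\mu|}$, the restriction $1|_\mu = 0$, so the self-similar relation~\ref{E:CD} forces
\[
V(S_\mu\xi_{-k}) \;=\; S_{1\cdot \mu} V^{1|_\mu} \xi_{-k} \;=\; S_{1\cdot \mu}\xi_{-k},
\]
independently of any prior knowledge of $V$ on the vectors $\xi_{-k}$. Since $\S$ is an irreducible inductive-type atomic CK family with seed sequence $\{\xi_{-k}\}_{k\ge 0}$ and infinite aperiodic word $\cdots e_2 e_1$, every basic vector admits a factorization $\xi = S_\mu\xi_{-k}$ for some $\mu\in E^*$ and $k\geq 0$ (the seed vector itself being $\xi_0=S_{e_1\cdots e_k}\xi_{-k}$, and other basic vectors arising by applying further $S_\nu$'s and then enlarging $k$ via the ladder). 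Aperiodicity rules out the tail of the seed word being eventually equal to the constant $n$, so for all sufficiently large $k$ the word $\mu$ must contain at least one letter different from $n$.

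Accordingly, I would define $V\xi := S_{1\cdot\mu}\xi_{-k}$ for any such factorization with $\mu$ not constantly $n$, and then verify well-definedness, unitarity, and the self-similar relations in turn. For well-definedness, if $\xi = S_\mu\xi_{-k}=S_{\mu'}\xi_{-k'}$ with $k<k'$ and both $\mu,\mu'$ not constantly $n$, then uniqueness of factorization in an atomic family yields $\mu'=\mu(e_{k+1}\cdots e_{k'})$, and a single application of~\ref{item:S1} together with $1|_\mu = 0$ and the ladder relation $S_{e_{k+1}\cdots e_{k'}}\xi_{-k'}=\xi_{-k}$ shows that the two definitions of $V\xi$ coincide.

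To show that $V$ extends to a unitary, I would check it permutes the atomic basis up to scalars in $\bT$. Each $V\xi = S_{1\cdot\mu}\xi_{-k}$ is again a basic vector times a unimodular scalar because $S_{1\cdot\mu}$ is an atomic operator. Injectivity follows from the orthogonality of ranges of distinct $S_\nu$'s of the same length combined with the bijectivity of $\mu\mapsto 1\cdot\mu$ on $E^{|\mu|}$. For surjectivity, given a basic vector $\eta=S_\nu\xi_{-k}$, the candidate preimage is $\mu := 1^{-1}\cdot\nu$; since $1\cdot n^k = e_1^k$, this $\mu$ is not constantly $n$ as long as $\nu$ is not constantly $e_1$, in which case $\xi := S_\mu\xi_{-k}$ satisfies $V\xi=\eta$. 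In the exceptional case $\nu = e_1^k$, one lengthens $k$ to $k+j$ along the ladder and invokes aperiodicity of the seed word once more to produce a prolongation $\nu\,e_{k+1}\cdots e_{k+j}$ that is no longer constantly $e_1$.

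Finally, verification of~\ref{E:CD} on basic vectors splits into two cases: for $e\ne n$ the definition gives $VS_e = S_{1\cdot e} = S_{1\cdot e}V^{1|_e}$ directly since $1|_e=0$, while for $e=n$ the identity $VS_n = S_{e_1}V$ follows from the computation $1\cdot(n\mu)=e_1(1\cdot\mu)$ applied to $\xi = S_\mu\xi_{-k}$. Condition~\ref{E:NC} is then automatic by Proposition~\ref{prop.automatic.NC}, and uniqueness is immediate because the forcing relation completely determines any candidate unitary on the dense set of basic vectors. I expect the main obstacle to be the exceptional surjectivity case $\nu=e_1^k$, where aperiodicity of the seed word has to be invoked slightly more carefully than merely to rule out the constant-$n$ word.
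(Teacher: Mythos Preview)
Your proposal is correct and follows essentially the same route as the paper: define $V$ on each basic vector via $V(S_\mu\xi_{-k})=S_{1\cdot\mu}\xi_{-k}$ once $\mu$ contains a letter $\ne e_n$, check well-definedness by comparing two such factorizations along the ladder, obtain unitarity by showing $V$ permutes the atomic basis (with the surjectivity obstruction at words $e_1^k$ removed by prolonging along the aperiodic tail), and then verify \ref{E:CD} by the case split $e\ne e_n$ versus $e=e_n$, with \ref{E:NC} following from Proposition~\ref{prop.automatic.NC}. The only cosmetic difference is that the paper phrases surjectivity by exhibiting the explicit preimage $S_{e_n}^{s-1}S_{e_s-1}\xi_{-s}$ at the first non-$e_1$ letter, whereas you invoke the inverse permutation $1^{-1}\cdot\nu$ directly; these are the same computation.
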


\begin{proof} Since $S$ is of the inductive type, each basic vector $\xi$ can be written as 
\begin{align*}
    \xi = S_{e_1} \xi_{-1} 
     = S_{e_1} S_{e_2} \xi_{-2} 
    = \dots 
    = S_{e_1} \cdots S_{e_k} \xi_{-k}.
\end{align*}
Here, $\xi_{-n}$'s are all basic vectors and the infinite word $e_1 e_2 \cdots$ is aperiodic. Suppose $s$ is the smallest index such that $e_s\neq n$, then we have
\begin{align*}
    V\xi &= S_1 \cdots S_1 S_{e_s+1} \xi_{-s} \\
    &= S_1 \cdots S_1 S_{e_s+1} S_{e_{s+1}} \xi_{-(s+1)} \\
    &= S_1 \cdots S_1 S_{e_s+1} S_{e_{s+1}} \cdots S_{e_k} \xi_{-k}.
\end{align*}
Therefore, whenever $\xi=S_\mu \xi_0$ for some $\mu$ not all $n$, we always have $V\xi = S_{1\cdot \mu} \xi_0$, and this is well-defined. Since $S$ is an atomic representation, $S_{1\cdot \mu}\xi_0$ is also an atomic basic vector. Notice that the $\mathbb{N}$-action $1\cdot\mu$ is an automorphism of $E^*$, we have that $V$ is an injective map on the atomic basic vectors. 

To see that $V$ is unitary, we observe that by the infinite aperiodic word associated with each $\xi$, we can find $\xi=S_1\cdots S_1 S_{e_s} \xi_{-s}$ for some $e_s\neq 1$. Then let $\xi_0=S_1\cdots S_1 S_{e_s-1} \xi_{-s}$. We have $V\xi_0=\xi$, and thus $V$ is a bijection on the atomic basic vectors and hence is unitary. 

To see that $(\V,\S)$ is a unitary + CK type atomic representation, we are left to check the Toeplitz condition. For any basic vector $\xi$, suppose $\xi=S_n\cdots S_n S_{e_s} \xi_{-s}$ where $e_s\neq n$. We have
\begin{align*}
     VS_i \xi &= \begin{cases}
     S_{i+1}\xi, & \text{if } i\neq n\\
     VS_n (S_n\cdots S_n S_{e_s} \xi_{-s}), & \text{if } i=n
     \end{cases} \\
     &= \begin{cases}
     S_{i+1}\xi, & \text{if } i\neq n\\
     S_1 (S_1\cdots S_1 S_{e_s+1} \xi_{-s}), & \text{if } i=n
     \end{cases} \\
     &= \begin{cases}
     S_{i+1}\xi, & \text{if } i\neq n\\
     S_1 V(S_n\cdots S_n S_{e_s} \xi_{-s}), & \text{if } i=n
     \end{cases} \\
     &= \begin{cases}
     S_{i+1}\xi, & \text{if } i\neq n\\
     S_1 V \xi, & \text{if } i=n.
     \end{cases} 
\end{align*}
Since $V$ is unitary and $\S$ is CK, we have that $(\V,\S)$ is indeed a Toeplitz representation. 
\end{proof}

We are now left with the case when $\S$ is a cycle type. Observe that if the primitive word for the cycle $e_1e_2\cdots e_n$ contains anything other than $1$ or $n$, then each $\xi=S_\mu\xi_1$ for some $\mu$ that is not all $n$ or $1$. Therefore, the same proof for the Proposition~\ref{prop.inductive} applies. In such a case, there exists a unique unitary $V$ such that $(\V,\S)$ is a unitary + CK type atomic representation. 

We are left with two cases: either we have a 1-cycle of $1$ for the atomic representation $S$ or a 1-cycle of $n$. We deal with the two cases separately here.

\begin{prop}\label{prop.1cycle.case1}  Suppose $(\V,\S)$ is an atomic representation on $\H$ and suppose $\{\xi_i\}_{i\in I}$ are atomic basic vectors such that $\S$ is a cycle-type representation with a 1-cycle of $n$ on $\xi_0$ (i.e. $S_n \xi_0=\lambda \xi_0$, $\lambda\in\mathbb{T}$). 
Then there exists basic vectors $\{\eta_i\}_{i\in I}$ that are disjoint from $\xi_i$'s, such that $(\V,\S)$ is a unitary + CK type atomic representation on the reducing subspace $\overline{\spn}\{\xi_i, \eta_i\}$. 
\end{prop}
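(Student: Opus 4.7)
The plan is to enlarge $\overline{\spn}\{\xi_i\}$ by a second atomic sub-representation of $\S$ carrying a 1-cycle on the edge $e_1$ with the same eigenvalue $\lambda$, and then to verify that $(\V,\S)$ is unitary + CK atomic on the combined span. First I identify the obstruction: applying \ref{E:CD} in the form $V S_n = S_1 V$ to the relation $S_n \xi_0 = \lambda \xi_0$ yields $S_1 (V \xi_0) = \lambda (V \xi_0)$, so $V \xi_0$ must be a $\lambda$-eigenvector of $S_1$. In the irreducible cycle-type atomic $\S$-representation with a 1-cycle on $e_n$ at $\xi_0$, the only proportionalities $S_\mu \xi_0 \propto \xi_0$ come from trailing $e_n$'s, so no vector in $\overline{\spn}\{\xi_i\}$ plays this role. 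Hence $\eta_0 := V\xi_0$ must live outside $\overline{\spn}\{\xi_i\}$.

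Next I generate the new basis. Setting $\eta_\mu := S_\mu \eta_0$ and observing that $S_1 \eta_0 = \lambda \eta_0$, the standard analysis of atomic TCK $E$-families (cf.\ \cite{DP1999, DDL2019}) shows $\{\eta_0\} \cup \{\eta_\mu : \mu \in E^*,\ \text{last letter of } \mu \ne e_1\}$ is an orthonormal set carrying an irreducible cycle-type CK sub-representation of $\S$ with a 1-cycle on $e_1$. I index this basis as $\{\eta_i\}_{i \in I}$ disjoint from $\{\xi_i\}$ and set $\tilde\H := \overline{\spn}\{\xi_i, \eta_i\}$. Property~\ref{item:S3} combined with $1|_{e_n}=1$ and $1|_{e_i}=0$ for $i<n$ gives $1|_\mu = 1$ exactly when $\mu = e_n^k$ and $0$ otherwise; expanding \ref{E:CD} then produces
\begin{align*}
V \xi_\mu = \begin{cases} \xi_{1\cdot\mu}, & \mu \ne e_n^k, \\ \lambda^k \eta_0, & \mu = e_n^k, \end{cases}
\end{align*}
so $V$ sends $\overline{\spn}\{\xi_i\}$ into $\tilde\H$. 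For the $\eta$-side, applying \ref{E:CD} to $S_1 \eta_0 = \lambda \eta_0$ gives $\lambda V\eta_0 = V S_1 \eta_0 = S_2 \eta_0$, hence $V \eta_0 = \bar\lambda S_2 \eta_0 \in \overline{\spn}\{\eta_i\}$; iterating, $V$ sends $\overline{\spn}\{\eta_i\}$ into itself.

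It remains to verify that $V|_{\tilde\H}$ is unitary. As $V$ sends atomic basis vectors to unimodular scalar multiples of atomic basis vectors, it suffices to check surjectivity. For a canonical basis vector $\xi_\nu$ (with $\nu = \emptyset$ or $\nu$'s last letter $\ne e_n$) that is \emph{not} indexed by an all-$e_1$ path, let $[\nu]$ denote the unique path with $1 \cdot [\nu] = \nu$: this $[\nu]$ is not of the form $e_n^k$, so $V\xi_{[\nu]} = \xi_\nu$. The exceptional case $\nu = e_1^k$ ($k \ge 1$) is covered via $V \xi_{e_n^k e_{n-1}} = \lambda \xi_{e_1^k}$, obtained from $1 \cdot (e_n^k e_{n-1}) = e_1^k e_n$ together with the trailing-$e_n$ stripping rule. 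The $\eta$-basis is hit by the symmetric argument, with $\eta_0$ itself picked up by $V \xi_0 = \eta_0$. Hence $V|_{\tilde\H}$ is a surjective isometry, thus unitary. Since $\S|_{\tilde\H}$ is a direct sum of two cycle-type CK families and both $V$ and the $S_e$'s permute $\{\xi_i\} \cup \{\eta_i\}$ up to unimodular scalars, $(\V, \S)|_{\tilde\H}$ is a unitary + CK atomic representation; condition \ref{E:NC} is automatic by Proposition~\ref{prop.automatic.NC}.

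The main obstacle will be the surjectivity step: one must track canonical representatives across two cycle structures (trailing $e_n$'s in $\overline{\spn}\{\xi_i\}$ versus trailing $e_1$'s in $\overline{\spn}\{\eta_i\}$) and in particular handle the "all-ones" basis vectors $\xi_{e_1^k}$, which are not reached by the naive inversion $\nu \mapsto [\nu]$ of the $\bN$-action on paths and require the detour through paths ending in $e_{n-1}$.
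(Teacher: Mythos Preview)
Your approach is essentially the paper's: set $\eta_0 := V\xi_0$, verify $S_1\eta_0 = \lambda\eta_0$ via \ref{E:CD}, generate the second cycle-type CK family $\{\eta_i\}$, and check that $V$ is a bijection on the combined atomic basis. Your explicit formula for $V\xi_\mu$ in terms of $1|_\mu$ and your argument that $\eta_0 \notin \overline{\spn}\{\xi_i\}$ are both correct.

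There is one small gap in your surjectivity check: the vector $\xi_0$ (the case $\nu = \emptyset$) is not covered. Your generic case requires $[\nu]$ not to be of the form $e_n^k$, but $[\emptyset] = \emptyset = e_n^0$; your exceptional case is stated only for $k \ge 1$. The fix is immediate, since your own formula $V\xi_{e_n^k e_{n-1}} = \lambda\xi_{e_1^k}$ remains valid at $k=0$ and yields $V\xi_{e_{n-1}} = \lambda\xi_0$. The paper sidesteps this bookkeeping by a single observation: every basic vector in $\{\xi_i\}\cup\{\eta_i\}$ except $\eta_0$ can be written as $S_\mu\zeta$ with $\mu$ not of the form $e_1^k$ (for the $\xi$-side one uses $\xi_0 = \bar\lambda S_{e_n}\xi_0$), and any such vector lies in $\ran V$ by the preimage construction from Proposition~\ref{prop.inductive}; the sole exception $\eta_0$ is already $V\xi_0$.
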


\begin{proof} 
Let $V\xi_0=\omega\eta_0$ ($\omega\in\mathbb{T}$), which is a basic atomic vector since $(\V,\S)$ is atomic. Without loss of generality, by re-scaling the basic atomic vectors, we may assume $\omega=1$. Now,
\[S_1 \eta_0 = S_1 V \xi_0 = V S_n \xi_0 = V (\lambda \xi_0) = \lambda \eta_0.\]
Therefore, $\S$ has a 1-cycle of $1$ on $\eta_0$. Now let $\{\eta_i\}_{i\in I}=\{S_\mu \eta_0: \mu\text{ does not end with }1\}$ are all basic vectors and we know from the atomic representation of $\S$ that $\S$ is a cycle-type atomic representation on the space $\overline{\spn}\{\eta_i\}$. For each $\eta_i=S_\mu \eta_0$, either $\mu$ is not all $n$, or when it is, we can write $\eta_i=S_\mu (\overline{\lambda}S_1 \eta_0)$. In either case, $V\eta_i$ is uniquely determined following the same argument in Proposition~\ref{prop.inductive}. 

To see $V$ is unitary, the only vector among $\{\xi_i, \eta_i\}_{i\in I}$ that cannot be written as a $S_\mu e$ with $\mu$ not all $1$ is the vector $\eta_0$. But $\eta_0=V \xi_0\in\ran(V)$ as well. Therefore, $V$ is unitary on the reducing subspace $\overline{\spn}\{\xi_i, \eta_i\}$.
Therefore, $(\V,\S)$ is a unitary + CK type atomic representation on the reducing subspace $\overline{\spn}\{\xi_i, \eta_i\}$. 
\end{proof}

\begin{prop}\label{prop.1cycle.case2} Suppose $(\V,\S)$ is an atomic representation on $\H$ and suppose $\{\eta_i\}_{i\in I}$ are atomic basic vectors such that $\S$ is a cycle-type representation with a 1-cycle of $1$ on $\eta_0$ (i.e. $S_1 \eta_0=\lambda \eta_0$, $\lambda\in\mathbb{T}$). Then there are two cases:
\begin{enumerate}
    \item If $\eta_0\notin \ran V$, then $(\V,\S)$ is a pure + CK type atomic representation on the reducing subspace $\overline{\spn}\{\eta_i\}$. 
    \item If $\eta_0\in \ran V$. Then there exists basic atomic vectors $\{\xi_i\}_{i\in I}$ that are disjoint from $\eta_i$'s, such that $(\V,\S)$ is a unitary + CK type atomic representation on the reducing subspace $\overline{\spn}\{\xi_i, \eta_i\}$. In particular, $\S$ when restricted on $\overline{\spn}\{\xi_i\}$ is a cycle-type representation with a 1-cycle of $n$ on $V^* \eta_0$. 
\end{enumerate}
\end{prop}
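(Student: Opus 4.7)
The strategy is to handle the two cases by exploiting the Nica-covariance identity $V^*S_1 = S_n V^*$ from Condition~\ref{E:NC}, which converts the $1$-cycle-of-$1$ relation $S_1 \eta_0 = \lambda \eta_0$ into a $1$-cycle-of-$n$ relation on $V^*\eta_0$; this will reduce case (ii) to Proposition~\ref{prop.1cycle.case1}.

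For case (i), I first observe that, because $(\V,\S)$ is atomic, $V$ sends each atomic basic vector to a scalar multiple of another, so $\ran V$ is the closed span of a subfamily of the basic vectors. Since $\eta_0$ lies outside this subfamily, $\eta_0 \perp \ran V$ and hence $V^*\eta_0 = 0$. Next I verify that $\K := \overline{\spn}\{\eta_i\}$ reduces $(\V,\S)$: it is automatically $\S$-reducing, and for $\V$-reducing, the identity $\eta_0 = \bar\lambda S_1 \eta_0$ gives $V\eta_0 = \bar\lambda S_2 \eta_0 \in \K$, while for $\eta_i = S_\mu \eta_0$ the intertwining $VS_\mu = S_{1\cdot \mu}V^{1|_\mu}$ (together with its adjoint version from~\ref{E:NC}) handles the rest after splitting into the subcases $\mu = n^k$ (where $1|_\mu = 1$) and $\mu$ not consisting entirely of $n$'s (where $1|_\mu = 0$). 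With $\K$ reducing, $V|_\K$ admits a Wold decomposition into unitary and pure parts, each reducing $\S|_\K$. Cycle-type atomic representations are CK and irreducible by \cite[Theorem 3.4]{DP1999}, so one of the two Wold summands must be zero; since $\eta_0$ is a nonzero element of $\ker V^*$ (the wandering subspace of the pure part), the unitary part vanishes and $V|_\K$ is pure. This establishes the pure + CK atomic structure on $\K$.

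For case (ii), set $\xi_0 := V^*\eta_0$. Since $\eta_0 \in \ran V$ and $V$ is an isometry, $\xi_0 \neq 0$ with $V\xi_0 = \eta_0$, and atomicity makes $\xi_0$ a scalar multiple of an atomic basic vector. The key calculation
\[
S_n \xi_0 \;=\; S_n V^*\eta_0 \;=\; V^* S_1 \eta_0 \;=\; \lambda V^*\eta_0 \;=\; \lambda \xi_0
\]
exhibits a $1$-cycle of $n$ on $\xi_0$. Applying Proposition~\ref{prop.1cycle.case1} to this $\xi_0$ produces the cycle-type family $\{\xi_i\}$ around $\xi_0$ together with disjoint basic atomic vectors $\{\eta_i'\}$ such that $\overline{\spn}\{\xi_i, \eta_i'\}$ reduces $(\V,\S)$ to a unitary + CK atomic representation. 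The construction there takes $\eta_0' = V\xi_0$, which equals our $\eta_0$ up to a unimodular scalar, and the remaining $\{\eta_i'\}$ then coincide with our $\{\eta_i\}$ because a cycle-type atomic structure is uniquely determined by its cycle vector. This yields the claimed reducing subspace $\overline{\spn}\{\xi_i, \eta_i\}$, with $\S|_{\overline{\spn}\{\xi_i\}}$ being the cycle-type representation with a $1$-cycle of $n$ on $V^*\eta_0$.

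The main technical care lies in the case-by-case verification in case (i) that $V$ and $V^*$ preserve $\K$, which hinges on the fact that $1|_\mu \neq 0$ occurs exactly when $\mu = n^k$; once this is done, the irreducibility of $\S|_\K$ does the conceptual work of excluding a unitary summand of $V$, and case (ii) becomes a direct corollary of Proposition~\ref{prop.1cycle.case1} via the identity $S_n V^* = V^* S_1$.
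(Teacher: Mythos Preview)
Your proposal is correct and follows essentially the same line as the paper: case (ii) is identical (the computation $S_n V^*\eta_0 = V^*S_1\eta_0 = \lambda V^*\eta_0$ followed by an appeal to Proposition~\ref{prop.1cycle.case1}), and for case (i) the paper simply asserts that $\overline{\spn}\{\eta_i\}$ reduces $(\V,\S)$ and that the restriction is ``clearly'' pure + CK, whereas you supply the details via the Wold decomposition of $V|_\K$ together with the irreducibility of the cycle-type $\S$-representation to exclude a unitary summand. One small imprecision: when you treat $V^*\eta_i$ via Condition~\ref{E:NC}, the relevant case split is on the preimage $\nu$ with $1\cdot\nu=\mu$ (so the exceptional case is $\mu=1^k$, i.e.\ $\nu=n^k$), not on $\mu=n^k$; the computation still goes through once this is adjusted.
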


\begin{proof} Since each vector $\eta_i$ can be written as $\eta_i=S_\mu \eta_0=\lambda S_\mu S_1 \eta_0$, we know $V\eta_i$ is uniquely determined by the same argument as in Proposition~\ref{prop.inductive}. 

If $\eta_0\notin \ran V$, then $\overline{\spn}\{\eta_i\}$ reduces both $V$ and $S$ and it is clear that $(\V,\S)$ is a pure + CK type atomic representation. 

If $\eta_0\in\ran V$, then $\eta_0=V\xi_0$. Then we have
\[S_n \xi_0 = S_n V^* \eta_0 = V^* S_1 \eta_0 = \lambda V^* \eta_0 = \lambda \xi_0.\]
Therefore, $\S$ has a 1-cycle of $1$ on $\eta_0$. Now let $\{\xi_i\}_{i\in I}=\{S_\mu \xi_0: \mu\text{ does not end with }n\}$. We know $\S$ is a cycle-type atomic representation on the space $\overline{\spn}\{\xi_i\}$ with a 1-cycle of $n$ on $\xi_0$. This is essentially the same situation as in Proposition~\ref{prop.1cycle.case1}, in which case we get a unitary + CK type atomic representation on $\overline{\spn}\{\xi_i, \eta_i\}$. 
\end{proof}

\begin{eg} The following diagram illustrates an example of unitary + CK type atomic representation for the odometer semigroup $\bO_2$ where $\S$ is a cycle-type representation. 

\begin{figure}[h]
    \centering
    
    \def\gspace{5}
    \begin{tikzpicture}[scale=1, every node/.style={scale=1}]
    
    \node at (0,4) {$\bullet$};
    
    \node at (0,2) {$\bullet$};
    \node at (-1.5,0) {$\bullet$};
    \node at (1.5,0) {$\bullet$};
    
    \node at (-2,-2) {$\bullet$};
    \node at (-1,-2) {$\bullet$};
    \node at (1,-2) {$\bullet$};
    \node at (2,-2) {$\bullet$};
    
    \draw [decoration={markings,
    mark=at position 0.97 with \arrow{>}},
    postaction=decorate] plot [smooth] coordinates {(0,4) (0.3, 4.6) (0, 4.8) (-0.3,4.6) (0,4)};
    \node at (0,5) {$1$};
    
	\draw[decoration={markings,
    mark=at position 0.97 with \arrow{>}},
    postaction=decorate] (0,4) -- (0,2);
	\node at (0.2,3) {$2$};
	
	\draw[decoration={markings,
    mark=at position 0.97 with \arrow{>}},
    postaction=decorate] (0,2) -- (-1.5,0);
    \node at (-1,1) {$1$};
	\draw[decoration={markings,
    mark=at position 0.97 with \arrow{>}},
    postaction=decorate] (0,2) -- (1.5,0);
    \node at (1,1) {$2$};
    
    \draw[decoration={markings,
    mark=at position 0.97 with \arrow{>}},
    postaction=decorate] (-1.5,0) -- (-2,-2);
    \node at (-1.9,-1) {$1$};
	\draw[decoration={markings,
    mark=at position 0.97 with \arrow{>}},
    postaction=decorate] (-1.5,0) -- (-1,-2);
    \node at (-1.1,-1) {$2$};
    
    \draw[decoration={markings,
    mark=at position 0.97 with \arrow{>}},
    postaction=decorate] (1.5,0) -- (1,-2);
    \node at (1.1,-1) {$1$};
	\draw[decoration={markings,
    mark=at position 0.97 with \arrow{>}},
    postaction=decorate] (1.5,0) -- (2,-2);
    \node at (1.9,-1) {$2$};

    \draw [dotted, decoration={markings, post length=1mm,
                 pre length=1mm,
    mark=at position 0.5 with \arrow{>}},
    postaction=decorate] plot [smooth] coordinates {(0,4) (-0.4,3) (0,2)};
    
    \draw [dotted, decoration={markings, post length=1mm,
                 pre length=1mm,
    mark=at position 0.5 with \arrow{>}},
    postaction=decorate] plot [smooth] coordinates {(0,2) (-1.25, 1.5) (-1.5,0)};
    
    \draw [dotted, decoration={markings, post length=1mm,
                 pre length=1mm,
    mark=at position 0.5 with \arrow{>}},
    postaction=decorate] plot [smooth] coordinates {(-1.5,0) (0, 0.4) (1.5,0)};
	
    \draw [dotted, decoration={markings, post length=1mm,
                 pre length=1mm,
    mark=at position 0.5 with \arrow{>}},
    postaction=decorate] plot [smooth] coordinates {(1.5,0) (-0.5, -0.4) (-2,-2)};
    
    \draw [dotted, decoration={markings, post length=1mm,
                 pre length=1mm,
    mark=at position 0.5 with \arrow{>}},
    postaction=decorate] plot [smooth] coordinates {(-2,-2) (-1.5, -1.8) (-1,-2)};
    
    \draw [dotted, decoration={markings, post length=1mm,
                 pre length=1mm,
    mark=at position 0.5 with \arrow{>}},
    postaction=decorate] plot [smooth] coordinates {(-1,-2) (0, -1.6) (1,-2)};
    
    \draw [dotted, decoration={markings, post length=1mm,
                 pre length=1mm,
    mark=at position 0.5 with \arrow{>}},
    postaction=decorate] plot [smooth] coordinates {(1,-2) (1.5, -1.8) (2,-2)};
    
    \draw [dotted, decoration={markings, post length=1mm,
                 pre length=1mm,
    mark=at position 0.97 with \arrow{>}},
    postaction=decorate] plot [smooth] coordinates {(2,-2) (1, -2.2) (0,-2.6)};
    
    \node at (-1.5,-2.5) {$\vdots$};
    \node at (1.5,-2.5) {$\vdots$};
    

    \node at (0+\gspace,4) {$\bullet$};

    \node at (0+\gspace+0.35,4) {$\xi_0$};
    \node at (0-+0.35,4) {$\eta_0$};
    
    \node at (0+\gspace,2) {$\bullet$};
    \node at (-1.5+\gspace,0) {$\bullet$};
    \node at (1.5+\gspace,0) {$\bullet$};
    
    \node at (-2+\gspace,-2) {$\bullet$};
    \node at (-1+\gspace,-2) {$\bullet$};
    \node at (1+\gspace,-2) {$\bullet$};
    \node at (2+\gspace,-2) {$\bullet$};
    
    \draw [decoration={markings,
    mark=at position 0.97 with \arrow{<}},
    postaction=decorate] plot [smooth] coordinates {(0+\gspace,4) (0.3+\gspace, 4.6) (0+\gspace, 4.8) (-0.3+\gspace,4.6) (0+\gspace,4)};
    \node at (0+\gspace,5) {$2$};
    
	\draw[decoration={markings,
    mark=at position 0.97 with \arrow{<}},
    postaction=decorate] (0+\gspace,4) -- (0+\gspace,2);
	\node at (0.2+\gspace,3) {$1$};
	
	\draw[decoration={markings,
    mark=at position 0.97 with \arrow{<}},
    postaction=decorate] (0+\gspace,2) -- (-1.5+\gspace,0);
    \node at (-1+\gspace,1) {$2$};
	\draw[decoration={markings,
    mark=at position 0.97 with \arrow{<}},
    postaction=decorate] (0+\gspace,2) -- (1.5+\gspace,0);
    \node at (1+\gspace,1) {$1$};
    
    \draw[decoration={markings,
    mark=at position 0.97 with \arrow{<}},
    postaction=decorate] (-1.5+\gspace,0) -- (-2+\gspace,-2);
    \node at (-1.9+\gspace,-1) {$2$};
	\draw[decoration={markings,
    mark=at position 0.97 with \arrow{<}},
    postaction=decorate] (-1.5+\gspace,0) -- (-1+\gspace,-2);
    \node at (-1.1+\gspace,-1) {$1$};
    
    \draw[decoration={markings,
    mark=at position 0.97 with \arrow{<}},
    postaction=decorate] (1.5+\gspace,0) -- (1+\gspace,-2);
    \node at (1.1+\gspace,-1) {$2$};
	\draw[decoration={markings,
    mark=at position 0.97 with \arrow{<}},
    postaction=decorate] (1.5+\gspace,0) -- (2+\gspace,-2);
    \node at (1.9+\gspace,-1) {$1$};

    \draw [dotted, decoration={markings, post length=1mm,
                 pre length=1mm,
    mark=at position 0.5 with \arrow{<}},
    postaction=decorate] plot [smooth] coordinates {(0+\gspace,4) (-0.4+\gspace,3) (0+\gspace,2)};
    
    \draw [dotted, decoration={markings, post length=1mm,
                 pre length=1mm,
    mark=at position 0.5 with \arrow{<}},
    postaction=decorate] plot [smooth] coordinates {(0+\gspace,2) (-1.25+\gspace, 1.5) (-1.5+\gspace,0)};
    
    \draw [dotted, decoration={markings, post length=1mm,
                 pre length=1mm,
    mark=at position 0.5 with \arrow{<}},
    postaction=decorate] plot [smooth] coordinates {(-1.5+\gspace,0) (0+\gspace, 0.4) (1.5+\gspace,0)};
	
    \draw [dotted, decoration={markings, post length=1mm,
                 pre length=1mm,
    mark=at position 0.5 with \arrow{<}},
    postaction=decorate] plot [smooth] coordinates {(1.5+\gspace,0) (-0.5+\gspace, -0.4) (-2+\gspace,-2)};
    
    \draw [dotted, decoration={markings, post length=1mm,
                 pre length=1mm,
    mark=at position 0.5 with \arrow{<}},
    postaction=decorate] plot [smooth] coordinates {(-2+\gspace,-2) (-1.5+\gspace, -1.8) (-1+\gspace,-2)};
    
    \draw [dotted, decoration={markings, post length=1mm,
                 pre length=1mm,
    mark=at position 0.5 with \arrow{<}},
    postaction=decorate] plot [smooth] coordinates {(-1+\gspace,-2) (0+\gspace, -1.6) (1+\gspace,-2)};
    
    \draw [dotted, decoration={markings, post length=1mm,
                 pre length=1mm,
    mark=at position 0.5 with \arrow{<}},
    postaction=decorate] plot [smooth] coordinates {(1+\gspace,-2) (1.5+\gspace, -1.8) (2+\gspace,-2)};
    
    \draw [dotted, decoration={markings, post length=1mm,
                 pre length=1mm,
    mark=at position 0.97 with \arrow{<}},
    postaction=decorate] plot [smooth] coordinates {(2+\gspace,-2) (1+\gspace, -2.2) (0+\gspace,-2.6)};
    
     \draw [dotted, decoration={markings, post length=1mm,
                 pre length=1mm,
    mark=at position 0.97 with \arrow{>}},
    postaction=decorate] plot [smooth] coordinates {(\gspace,4) (0,4)};
    
    \node at (-1.5+\gspace,-2.5) {$\vdots$};
    \node at (1.5+\gspace,-2.5) {$\vdots$};
    
    \end{tikzpicture}
\end{figure}

If we restrict to the invariant subspace generated by the basic vectors on the left side of the diagram, we obtain a pure + CK type atomic representation.     
\end{eg}

\section{Dilation and Maximal Representations}

Given a non-selfadjoint operator algebra $\mathcal{A}$, there exists a smallest C*-algebra that $\mathcal{A}$ embeds in, which is called the C*-envelope of $\A$. One way of computing C*-envelopes is by characterizing all the maximal representations using dilation theory techniques. Let $\rho:\mathcal{A}\to\mathcal{B}(\H)$ be a (completely contractive) representation of a non-selfadjoint operator algebra $\mathcal{A}$. A dilation of $\rho$ is a representation $\pi:\mathcal{A}\to\mathcal{B}(\mathcal{K})$ where $\H\subset\K$ and $\rho(a)=P_\H \pi(a) |_\H$ for all $a\in \mathcal{A}$. The representation $\rho$ is called \emph{maximal} if any dilation $\pi$ is in the form $\pi=\rho\oplus \sigma$. 

Dritschel and McCollough \cite{DM2005} proved that the C*-envelope of $\mathcal{A}$ is the C*-algebra generated by completely isometric maximal representations. Therefore, computing C*-envelopes is the same as characterizing all maximal representations. 

Non-selfadjoint operator algebras arise naturally from semigroup dynamics \cite{DFK2014, BLi2021}. A natural object to consider is universal non-selfadjoint operator algebras, which are universal with respect to certain representations. In our case, given a self-similar graph $(\mathbb{N},\DirectedGraph)$, it is associated with a small category $\DirectedGraph^* \bowtie P=\{(\mu, p): \mu\in E^*, p\in P\}$.
Any Toeplitz representation $(\V,\S)$ defines a representation of the small category $E^*\bowtie P$ by $\V\times\S((\mu,p))=S_\mu V_p$. We can build a universal non-selfadjoint operator algebra $\A_{P, E}$ by taking the completion of the (non-selfadjoint) operator algebra generated by the universal Toeplitz representation $\oplus (\V\times\S)$. This direct sum is not over an empty set because the left-regular representation is always a Toeplitz representation.
We call this 
non-selfadjoint operator algebra the universal non-selfadjoint operator algebra $\A_{P,\DirectedGraph}$  for the self-similar graph. 

In this section, we fix a self-similar graph $(\mathbb{N}, E)$ and consider its universal non-selfadjoint operator algebra $\A_{\mathbb{N},\DirectedGraph}$. The Wold decomposition (Theorem~\ref{thm.Wold.main}) implies that the representation of $\A_{\mathbb{N},\DirectedGraph}$ is a direct sum of four components. The main goal of this section is to compute the C*-envelope of $\A_{\mathbb{N},\DirectedGraph}$ by identifying which component in its Wold decomposition is maximal. We will prove the following theorem.

\begin{thm}\label{thm.main.envelope} The C*-envelope of $\A_{\mathbb{N},\DirectedGraph}$ is the self-similar graph C*-algebra $\O_{\mathbb{Z},\DirectedGraph}$, which is also the universal C*-algebra generated by a unitary + CK type Toeplitz representation. 
\end{thm}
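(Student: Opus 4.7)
The plan is to apply the Dritschel--McCullough theorem~\cite{DM2005}: the C*-envelope of $\A_{\bN,\DirectedGraph}$ is the C*-algebra generated by any maximal completely isometric representation. Combined with the Wold decomposition (Theorem~\ref{thm.Wold.main}), it suffices to establish two statements. First, every unitary + CK Toeplitz representation is maximal. Second, each of the remaining three Wold components -- pure + CK, unitary + pure shift, and left regular -- admits a non-trivial dilation to a unitary + CK representation. Once both are in hand, the C*-envelope coincides with the universal C*-algebra generated by a unitary + CK Toeplitz representation; since a unitary isometric representation of $\bN$ is canonically a unitary representation of $\bZ$, and since the CK condition on $\S$ is retained, this universal object is precisely $\O_{\bZ,\DirectedGraph}$.

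First I would verify maximality of unitary + CK. Let $(\tilde{\V},\tilde{\S})$ be any dilation of $(\V,\S)$ on $\K \supseteq \H$. Because $V$ is unitary on $\H$ while $\tilde{V}$ is an isometry satisfying $P_\H \tilde{V}|_\H = V$, a standard norm argument forces $\tilde{V}(\H) \subseteq \H$; unitarity of $V$ on $\H$ then also gives $\tilde{V}^*(\H) \subseteq \H$, so $\H$ reduces $\tilde{V}$. The analogous argument applied to the CK identity $\sum_{e \in vE^1} S_e S_e^* = S_v$ -- combined with $\tilde{S}_e^* \tilde{S}_e = \tilde{S}_{s(e)}$ and positivity -- shows that $\H$ reduces every $\tilde{S}_e$. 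The dilation therefore splits as $(\V,\S) \oplus (\V',\S')$, proving maximality.

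The bulk of the work is constructing explicit non-trivial dilations for the three non-maximal components, in each case targeting a unitary + CK representation while preserving the covariance relations~\ref{E:CD} and~\ref{E:NC}. For the \emph{pure + CK} component, I would form a bilateral extension $\tilde{\H} = \H \oplus \bigoplus_{n<0}\H_n$ with $\H_n \cong \ker V^*$ (or an appropriate amplification thereof), extend $V$ to the bilateral shift $\tilde{V}$, and define $\tilde{S}_e$ on the negative-index summands by iteratively inverting the relation $V_p S_\mu = S_{p\cdot \mu} V_{p|_\mu}$; the CK condition on the original $\S$ then propagates to $\tilde{\S}$ on $\tilde{\H}$. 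For the \emph{unitary + pure shift} component, the dual task is to enlarge $\S$ to a CK family while keeping $\V$ unitary: building on Lemma~\ref{lemma:key_lemma}, I would adjoin ``wandering predecessor'' spaces below the pure-shift tower that encode what $S_e^*$ should hit, and then use that lemma to extend $\tilde{V}$ uniquely as a unitary compatible with the enlarged TCK family, now CK. The \emph{left regular} component can be dilated by composing these two constructions, or equivalently by passing to the GNS representation of a suitable state on $\O_{\bZ,\DirectedGraph}$ generated by a wandering vector.

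The principal obstacle will be ensuring that on the enlarged Hilbert space the dilated operators continue to satisfy~\ref{E:CD} (and hence~\ref{E:NC} via Proposition~\ref{prop.automatic.NC}). Concretely, one must track Lemma~\ref{lm.technical} together with properties~\ref{item:S1}--\ref{item:S5}, and crucially exploit the assumption~\ref{item:A1} that $n|_e \to \infty$: this is what provides enough ``vertical room'' on the newly adjoined subspaces to consistently define $\tilde{S}_e$ and $\tilde{V}$ so that the self-similar interaction between the $\bN$-action and the restriction maps $n|_\mu$ survives the dilation. Verifying that each construction genuinely strictly enlarges $\H$ (so the dilations are non-trivial, excluding the component from maximality) is immediate by construction once the compatibility is established. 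Combining Steps 1 and 2 with Dritschel--McCullough then yields $\cenv(\A_{\bN,\DirectedGraph}) = \O_{\bZ,\DirectedGraph}$.
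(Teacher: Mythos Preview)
Your overall strategy matches the paper's: invoke Dritschel--McCullough, show that unitary + CK representations are maximal (your argument here is essentially Proposition~\ref{P:UU}), and show the other three Wold components are not maximal by exhibiting explicit non-trivial dilations. Your treatment of the case where $\V$ is pure (pure + CK and left regular) via a bilateral extension of $V$ is also the paper's approach (Theorem~\ref{T:Shifts}).

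There is, however, a genuine gap in your handling of the unitary + pure shift case. You propose to enlarge $\S$ to a CK family and then invoke Lemma~\ref{lemma:key_lemma} to extend $V$ as a unitary. But Lemma~\ref{lemma:key_lemma} applies only when $\S$ is of pure shift type: it builds $U$ from a prescribed unitary $U_0$ on the wandering space $\W$. If your enlarged $\widetilde\S$ is CK, the wandering space is zero and the lemma gives you nothing. More substantively, the paper's example preceding Proposition~\ref{prop.unitary.pure} shows that the na\"{i}ve ``adjoin predecessors'' idea already fails at the atomic level for $\bO_n$: there is no way to add a single atomic preimage of the wandering vector under some $S_{e_i}$ without violating the covariance relations. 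So the obstruction is real, not just a mismatch of lemmas.

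The paper sidesteps this entirely. It does \emph{not} dilate unitary + pure shift to unitary + CK; instead it constructs a strictly larger representation that is \emph{still} unitary + pure shift, on $\widehat\H=\H\otimes\bC^M$ with $M=q|_{e_0}>0$ coming from a chosen edge orbit (Proposition~\ref{prop.unitary.pure}). The embedding $\widehat J$ is genuinely non-obvious: it sends wandering vectors of $\H$ into the range of $\widehat S_{e_j}$ on $\widehat\H$, with carefully chosen $U$-twists, and Lemma~\ref{lemma:key_lemma} is used on the \emph{dilated} (still pure shift) family $\widehat\S$ to build $\widehat U$. The point you should take away is that for Dritschel--McCullough you only need \emph{some} non-trivial dilation, not one that lands in unitary + CK; insisting on the latter is both unnecessary and, for this component, substantially harder than what the paper actually does.
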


By the result of Dritschel and McCollough \cite{DM2005}, it suffices to prove that a Toeplitz representation is maximal if and only if it is a unitary + CK type. Showing unitary + CK type Toeplitz representations are always maximal is the easier direction.

\begin{prop}\label{P:UU} Let $(\V,\S)$ be a unitary + CK type representation of a self-similar graph $(\mathbb{N},\DirectedGraph)$. Then $(\V,\S)$ is a maximal representation. 
\end{prop}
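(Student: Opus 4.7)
The plan is to take an arbitrary dilation of $(\V,\S)$ and show that $\H$ is reducing. By the standard maximal-dilation reduction (Dritschel--McCullough plus Arveson extension), it suffices to consider a dilation of the form $(\hat\V,\hat\S)$, a Toeplitz representation of $(\bN,E)$ on a Hilbert space $\K\supset\H$ with $P_\H \hat V\vert_\H = V$ and $P_\H \hat S_\mu\vert_\H = S_\mu$; in particular $\hat V$ is an isometry and $\hat\S$ is a TCK $E$-family satisfying \ref{E:CD} and \ref{E:NC}. The goal is to show $\H$ reduces both $\hat V$ and $\hat\S$, from which the maximality of $(\V,\S)$ is immediate.

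First, I dispense with the vertex projections and the isometry. For each $v\in E^0$, the identity
\[
\|\hat S_v h\|^2 \;=\; \langle \hat S_v h,h\rangle \;=\; \langle S_v h,h\rangle \;=\; \|S_v h\|^2 \;=\; \|P_\H \hat S_v h\|^2 \qquad (h\in\H)
\]
forces $\hat S_v h\in\H$, so $\H$ reduces each $\hat S_v$. Since $V$ is unitary on $\H$, the chain $\|\hat V h\| = \|h\| = \|Vh\| = \|P_\H \hat V h\|$ forces $\hat V h\in\H$; the same argument applied to $V^*$ (still an isometry on $\H$) together with $\|\hat V^* h\|\le\|h\|=\|V^*h\|=\|P_\H \hat V^* h\|$ forces $\hat V^* h\in\H$. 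Thus $\H$ reduces $\hat V$.

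The main step is showing $\H$ reduces each $\hat S_e$. The CK relation on $\H$ gives, for $h\in S_v\H$, the Pythagorean identity
\[
\|S_v h\|^2 \;=\; \sum_{e\in vE^1}\|S_e^* h\|^2,
\]
while on $\K$ the TCK relation yields
\[
\|\hat S_v h\|^2 \;=\; \sum_{e\in vE^1}\|\hat S_e^* h\|^2 \;+\; \|\hat P_v h\|^2, \qquad \hat P_v := \hat S_v - \sum_{e\in vE^1}\hat S_e\hat S_e^*.
\]
By the first step, $\|\hat S_v h\|=\|S_v h\|$, and the compression inequality gives $\|\hat S_e^* h\|\ge \|P_\H \hat S_e^* h\|=\|S_e^* h\|$ for every $e$. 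These estimates squeeze to equality throughout, so $\hat S_e^* h = P_\H \hat S_e^* h\in\H$ for each $e$, and $\hat P_v h=0$. For $h\in(I-S_v)\H$, writing $h=\sum_w h_w$ with $h_w\in S_w\H$ and using $\hat S_e^* = \hat S_e^* \hat S_{r(e)}$ (combined with the first step) gives $\hat S_e^* h = \hat S_e^* h_{r(e)}\in\H$. Invariance under $\hat S_e$ follows by a parallel computation via $\hat S_e = \hat S_e \hat S_{s(e)}$ and isometricity of $S_e$ on $S_{s(e)}\H$. Finally, $\hat S_\mu$ for $\mu\in E^*$ and $\hat V^p$ for all $p$ are polynomial in the generators, and compatibility with \ref{E:CD} and \ref{E:NC} is automatic.

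The main obstacle is precisely the CK-versus-TCK gap in the third step: the dilation only satisfies $\sum_e \hat S_e\hat S_e^*\le\hat S_v$, so a priori the wandering projection $\hat P_v$ could carry $\H$-vectors out of $\H$. The crucial trick is a two-sided squeeze combining the Pythagorean identities on $\H$ (equality, from CK) and on $\K$ (with a wandering remainder, from TCK) against the compression inequality $\|\hat S_e^* h\|\ge\|S_e^* h\|$. This simultaneously forces $\hat P_v$ to vanish on $\H$ and pulls each $\hat S_e^* h$ back into $\H$.
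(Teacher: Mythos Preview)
Your proof is correct and follows essentially the same approach as the paper's. Both arguments show that $\H$ reduces the dilation by exploiting exactly the same two facts: $V$ being unitary forces the off-diagonal blocks of $\hat V$ to vanish, and the CK relation $\sum_{e\in vE^1}S_eS_e^*=S_v$ squeezes the off-diagonal blocks of $\hat S_e$ to zero. The paper packages this via $2\times 2$ block matrices and the positivity inequalities $V'V'^*\le I$ and $\sum_e S_e'S_e'^*\le I$, while you unwind the same linear algebra into norm identities and a Pythagorean comparison; your treatment is a bit more explicit (you justify why $\hat S_v$ is block-diagonal, which the paper simply asserts), but the substance is identical.
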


\begin{proof}
Suppose that $(\V', \S')$ is an isometric Nica-covariant dilation of $(\V, \S)$. Then 
\begin{align*}
V'=
\begin{pmatrix}
V& A\\
0& B
\end{pmatrix}, \quad 
S_v'=
\begin{pmatrix}
S_v & 0\\
0 & P_v
\end{pmatrix} \qforal v\in E^0,
\quad
S_e'=
\begin{pmatrix}
S_e & C_e\\
E_e & D_e
\end{pmatrix} \qforal e\in E^1. 
\end{align*}
We first notice that $S_e'^* S_e'= S_{s(e)}'$, so that $S_e^* S_e + E_e^* E_e = S_{s(e)}$ and thus $E_e=0$. 

Then 
\begin{align*}
V'V'^*=
\begin{pmatrix}
VV^* + AA^*& AB^*\\
BA^*& B B^*
\end{pmatrix}, \quad 
S_e' S_e'^*=
\begin{pmatrix}
S_e S_e^*+ C_eC_e^* & C_e D_e^*\\
D_e C_e^* & D_eD_e^*
\end{pmatrix} \qforal e\in E^1. 
\end{align*}
But $V'V'^*\le I$ and $VV^*=I$. So $A=0$. 
For $\S'$, on one hand, $\sum_{e\in E^1} S_e'S_e'^*\le I$, but on the other hand $\sum_{e\in E^1} S_eS_e^*= I$ as $\S$ is CK. 
Hence $C_e=0$ for all $e\in E^1$. 
Therefore, the representation $(\V,\S)$ is maximal. 
\end{proof}

Thus it suffices to prove that each of the remaining cases, i.e., Theorem~\ref{thm.Wold.main} (ii) - (iv), 
admits a non-trivial dilation and thus cannot be maximal. 

\subsection{$\V$ is a pure isometry}

We first deal with the case where $\V$ is a pure isometry. This includes the two cases -- pure shift + CK type and left regular representations
(i.e., Theorem~\ref{thm.Wold.main} (ii) and (iv)). 


The following property from the assumption (A) is recorded for later frequent use. 

\begin{lem}
\label{L:AA}
For each $e\in \DirectedGraph^1$, let $m_e:=|\Omega_e|$, the cardinality of $\Omega_e$. 
\begin{itemize}
\item[(i)] 
For every $e\in E^1$ and $k\in \bN$, there are $f\in E^1$, $p,q\in \bN$ with $p\ge k$ such that 
\begin{align*}
q\cdot e=f \text{ and }q|_e=p\ge k.
\end{align*}

\item[(ii)]
If $(\V, \S)$ is a representation of $(\bN, E)$, then $V^{n m_e} S_e =S_e V^{n \sum_{f\in \Omega_e} 1|_f}$ for every $n\in \bN$ and $e\in E^1$.
\end{itemize}
\end{lem}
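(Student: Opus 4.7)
The plan is to handle the two parts separately; both reduce to direct computations from the definitions, and I do not anticipate any substantial obstacle.

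Part (i) will follow immediately from the reformulation of assumption (A) recorded in the excerpt, namely $n\vert_e \to \infty$ as $n\to \infty$. Given $e\in E^1$ and $k\in \bN$, the plan is to pick $q\in \bN$ large enough that $q\vert_e\ge k$, set $p := q\vert_e$ and $f := q\cdot e$, and observe that $f\in E^1$ since the $\bN$-action preserves edges. All three required conditions $f\in E^1$, $q\cdot e = f$, and $q\vert_e = p\geq k$ are then built in by construction.

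For part (ii), the strategy is to first establish the case $n=1$ and then iterate. Applying the intertwining relation from Definition~\ref{D:repn} (extended to $\DirectedGraph^*$) with $p = m_e$ and $\mu = e$ gives
\[
V^{m_e} S_e = S_{m_e\cdot e}\, V^{m_e\vert_e}.
\]
Because $m_e = |\Omega_e|$ and the orbit $\{e, 1\cdot e, \ldots, (m_e-1)\cdot e\}$ consists of $m_e$ distinct elements, we have $m_e\cdot e = e$. The key remaining step is to show the telescoping identity
\[
m\vert_e \;=\; \sum_{i=0}^{m-1} 1\vert_{i\cdot e} \qquad \text{for every } m\in \bN,
\]
which follows by a short induction on $m$ from property~\ref{item:S5} applied with $p=1$ and $q=m-1$. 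Setting $m=m_e$ and using that the indices $0,1,\dots,m_e-1$ produce each element of $\Omega_e$ exactly once yields $m_e\vert_e = \sum_{f\in \Omega_e} 1\vert_f$, completing the base case.

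The general $n\ge 1$ case is then a one-line induction: assuming the formula for $n-1$,
\[
V^{nm_e}S_e = V^{(n-1)m_e}\bigl(V^{m_e}S_e\bigr) = V^{(n-1)m_e} S_e\, V^{\sum_{f\in \Omega_e} 1\vert_f} = S_e V^{(n-1)\sum_{f\in \Omega_e} 1\vert_f}\, V^{\sum_{f\in \Omega_e} 1\vert_f} = S_e V^{n\sum_{f\in \Omega_e} 1\vert_f},
\]
using commutativity of powers of $V$. Equivalently, one can apply the intertwining relation directly to $V^{nm_e}S_e$ and observe that $(nm_e)\cdot e = e$ while the telescoping expression for $(nm_e)\vert_e$ is periodic of period $m_e$, hence equals $n\sum_{f\in \Omega_e} 1\vert_f$. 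The only care needed anywhere in the argument is the correct bookkeeping of the iterated restriction, which is the most routine sort of check.
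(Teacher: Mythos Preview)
Your proposal is correct and follows essentially the same route as the paper: for (i) the paper just invokes that $n\vert_e$ is increasing (hence unbounded by assumption (A)), and for (ii) the paper simply records the two key facts $m_e\cdot e=e$ and $m_e\vert_e=\sum_{f\in\Omega_e}1\vert_f$, which you establish via the telescoping identity and then iterate. Your write-up supplies the details the paper leaves implicit, but the underlying argument is the same.
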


\begin{proof}
(i) This follows from the fact that, for fixed $e\in E^1$, $n|_e$ is increasing with respect to $n\in \bN$. 

For (ii), it suffices to notice that $m_e\cdot e=e$ and $m_e|_e=\sum_{f\in \Omega_e} 1|_f$. 
\end{proof}

Let $\W:=\ker V^*$ and define 
\[
\widehat \H=\W\otimes \ell^2(\bZ).
\]
Let $J: \H\to \widehat \H$ be the operator $V^k\xi\mapsto \xi\otimes e_k$ for all $\xi\in \W$ and $k\ge 0$. 
Then $\H$ is embeded  into $\widehat\H$ via $J(\H)=\W\otimes \ell^2(\bN)$.

The isometry $V$ on $\H$ can now be easily extended to a unitary $\widehat V$ on $\widehat\H$ as follows
\[
\widehat V: \widehat \H\to \widehat \H,\ \xi\otimes e_n\mapsto \xi\otimes e_{n+1}\qforal n\in \bZ,\ \xi\in \W.
\] 
So $\widehat V$ is a unitary on $\widehat \H$, 


For arbitrary given $e\in E^1$ and $k\in \bZ$, it follows from Lemma \ref{L:AA} that one can choose $p,q\in \bN$ and $f\in E^1$ such that 
\begin{align}
\label{E:AA}
p+k\ge 0 \text{ and  }
V^q S_e=S_fV^p.
\end{align}
Now, for each $e\in E^1$, define $\widehat S_e$ on $\widehat \H$ 
\begin{align}
\label{E:Se}
\widehat S_e(\xi\otimes e_k)=(\widehat V^q)^* (S_f V^{p+k}\xi) \qforal \xi\in \W. 
\end{align}
For each $v\in \DirectedGraph^0$, define
\begin{align}
\label{E:Sv}
\widehat S_v:=  \widehat S_e^* \widehat S_e \quad (e\in \DirectedGraph^1 v). 
\end{align}

\begin{lem}
\label{L:ExtSe}
Keep the same notation as above. Then
\begin{enumerate}
\item 
$\widehat S_e$ is well-defined and $\widehat S_e|_\H=S_e$;
\item
$\widehat V \widehat S_e=\widehat S_{1\cdot e} \widehat V_{1|_e}$ for all $e\in \DirectedGraph^1$;
\item
$\widehat S_v$ is well-defined and $\widehat S_v|_\H=S_v$;
\item
$\widehat V \widehat S_v=\widehat S_{1\cdot v} \widehat V$ for all $v\in \DirectedGraph^0$;
\end{enumerate}
\end{lem}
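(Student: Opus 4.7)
The plan is to verify the four assertions in order, extracting everything from the defining formulas~\eqref{E:Se} and~\eqref{E:Sv}, the Toeplitz relations~\ref{E:CD}--\ref{E:NC}, and the identities collected in Remark~\ref{R:ss}.

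For~(1), I will first establish well-definedness of $\widehat S_e$ by showing the right-hand side of~\eqref{E:Se} is invariant under the shift $q\mapsto q+1$ (with the induced updates $f\mapsto 1\cdot f$ and $p\mapsto 1|_f+p$). Using $VS_f=S_{1\cdot f}V^{1|_f}$ from~\ref{E:CD}, the expression $(\widehat V^{q+1})^*S_{1\cdot f}V^{1|_f+p+k}\xi$ rewrites as $(\widehat V^q)^*\widehat V^*VS_f V^{p+k}\xi=(\widehat V^q)^*S_f V^{p+k}\xi$; since any two valid triples $(p,q,f)$ are related by such shifts (both $p$ and $f$ being determined by $q$), the formula is unambiguous. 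The identity $\widehat S_e|_\H=S_e$ is then immediate: for $k\geq 0$, $S_f V^{p+k}\xi=V^qS_e V^k\xi$ lies in $V^q\H$, and $(\widehat V^q)^*$ returns $S_e V^k\xi$ because $\widehat V$ agrees with $V$ on $\H$.

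For~(2), I will compute both sides of $\widehat V\widehat S_e=\widehat S_{1\cdot e}\widehat V^{1|_e}$ on $\xi\otimes e_k$. The left side becomes $(\widehat V^q)^*VS_f V^{p+k}\xi=(\widehat V^q)^*S_{1\cdot f}V^{1|_f+p+k}\xi$ after pushing $\widehat V$ inside and applying~\ref{E:CD}. On the right side, evaluating $\widehat S_{1\cdot e}$ at $\xi\otimes e_{k+1|_e}$ with the natural choice $q'=q$ gives $f'=1\cdot f$ (using commutativity of $\bN$) and $p'=q|_{1\cdot e}$; the relation $q|_{1\cdot e}+1|_e=(q+1)|_e=1|_f+p$ from~\ref{item:S5} then yields $p'+k+1|_e=1|_f+p+k$, so the two sides coincide.

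For~(3), I will compute $\langle\widehat S_e(\xi\otimes e_k),\widehat S_e(\eta\otimes e_\ell)\rangle$ with a single $q$ large enough for both $k$ and $\ell$. Unitarity of $\widehat V$ reduces this to $\langle (V^{p+\ell})^*S_{s(f)}V^{p+k}\xi,\eta\rangle$. Since $VV^*$ commutes with every $S_u$ (an easy consequence of~\ref{E:NC}), the off-diagonal contributions $k\neq\ell$ vanish because $V^{|k-\ell|}\W\perp\W$. For $k=\ell$, using $(V^j)^*S_{j\cdot u}V^j=S_u$ together with the crucial equality $q\cdot v=q|_e\cdot v=p\cdot v$ from Condition~\ref{cond:S4}, I obtain $(p+k)^{-1}\cdot(q\cdot v)=k^{-1}\cdot v$, so the inner product equals $\langle S_{k^{-1}\cdot v}\xi,\eta\rangle$. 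Thus $\widehat S_e^*\widehat S_e$ is the $e$-independent projection sending $\xi\otimes e_k\mapsto S_{k^{-1}\cdot v}\xi\otimes e_k$, establishing well-definedness of $\widehat S_v$; restricting to $\H$ ($k\geq 0$), the identity $V^kS_{k^{-1}\cdot v}=S_v V^k$ gives $\widehat S_v|_\H=S_v$. With this explicit formula in hand, (4) is immediate from $(k+1)^{-1}\cdot(1\cdot v)=k^{-1}\cdot v$. The main obstacle will be the explicit computation of $\widehat S_e^*\widehat S_e$ in~(3): it requires juggling the commutativity of $\bN$, the restriction identities in Remark~\ref{R:ss}, and Condition~\ref{cond:S4} to identify the source vertex correctly; once the diagonal formula is extracted the remaining checks are routine.
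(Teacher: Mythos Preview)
Your proposal is correct, and the overall structure matches the paper's, but several of your intermediate arguments are genuinely different and in places more efficient.

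For (i), the paper compares two arbitrary admissible triples $(f_1,p_1,q_1)$ and $(f_2,p_2,q_2)$ head-on, introducing an auxiliary integer $\ell$ with $V^\ell S_{f_j}=S_{f_j}V^m$ to force both expressions into a common form. Your shift-by-one reduction (showing invariance under $q\mapsto q+1$ and noting that $f,p$ are determined by $q$) is shorter and avoids that auxiliary step entirely; both arguments ultimately rest on~\ref{E:CD} and the unitarity of $\widehat V$.

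For (iii), the paper stops at the implicit formula $\widehat S_e^*\widehat S_e(\xi\otimes e_k)=(\widehat V^p)^*S_{s(f)}V^{p+k}\xi$ and then, to show independence of $e\in E^1v$, engineers special $q_1,q_2$ with $q_1|_{e_1}=q_2|_{e_2}$ so that both edges yield the same right-hand side. You instead push the computation one step further to the explicit diagonal description $\widehat S_v(\xi\otimes e_k)=S_{k^{-1}\cdot v}\xi\otimes e_k$, which requires two extra ingredients the paper does not isolate: that $[VV^*,S_u]=0$ (hence $S_u\W\subseteq\W$, killing the off-diagonal terms), and that $q\cdot v=p\cdot v$ via~\ref{cond:S4} (to identify the vertex). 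The payoff is that (iv) becomes a one-line check of $(k+1)^{-1}\cdot(1\cdot v)=k^{-1}\cdot v$, whereas the paper verifies (iv) by a separate two-sided computation paralleling its proof of (ii). Your route buys a cleaner endgame at the cost of the small preliminary observation about $VV^*$; the paper's route is more uniform but more laborious.
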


\begin{proof}
(i) Suppose that there are $f_i\in E^1$, $p_i, q_i\in \bN$ $(i=1,2)$ which satisfy 
\begin{align*}
p_i+k\ge 0 \text{ and  }
V^{q_i} S_e=S_{f_i}V^{p_i}\ (i=1,2).
\end{align*}
WLOG, let us assume that $q_2\ge q_1$, and so $p_2\ge p_1\ge -k$.
\[
q_j\cdot e=f_j \text{ and }q_j|_e=p_j\ge k.
\]
Since $f_1$ and $f_2$ both lie in the orbit of $e$, by Lemma \ref{L:AA}, there is $\ell\in \bN$ big enough such that 
\[
V^\ell S_{f_j}=S_{f_j} V^m\ (j=1,2)\quad \text{ and } \quad m+k\ge 0.
\]
Thus 
\begin{align*}
V^\ell S_{f_2}V^{p_2+k}
&=S_{f_2} V^{m+p_2+k}V^{q_2}S_eV^{m+k}\\
&=V^{q_2-q_1} V_{q_1} S_e V^{m+k}\\
&=V^{q_2-q_1} S_{f_1}  V^{p_1+m+k}\\
&=V^{q_2-q_1} V^\ell  S_{f_1} V^{p_1+k}\\
&=V^\ell V^{q_2-q_1}  S_{f_1} V^{p_1+k}.
\end{align*}
Therefore
\[
S_{f_2}V^{p_2+k}=V^{q_2-q_1}  S_{f_1} V^{p_1+k}.
\]
Then, for $\xi\in\W$, one has $S_{f_2}V^{p_2+k}\xi=V^{q_2-q_1} V_\ell  S_{f_1} V^{p_1+k}\xi\in\H$. This implies 
$(\widehat V^{q_2})^*S_{f_2}V^{p_2+k}\xi=(\widehat {V}^{q_1})^*  S_{f_1} V^{p_1+k}\xi$. Therefore $\widehat S_e$ in \eqref{E:Se} is well-defined. 
 
To see ${\widehat S_e}|_\H=S_e$, let $\xi\in \W$ and $k\in \bN$. Then $\widehat S_e (\xi\otimes e_k)=S_e V^k\xi$ by letting $p=q=0$ and 
$e=f$ in \eqref{E:AA}. Hence $\widehat S_e$ extends $S_e$ (through $J$). 

(ii) For $\xi\otimes e_k$, choose $p,q\in \bN$ with $p+k\ge 0$ (in particular $p+k+1|_e\ge 0)$ 
and $f\in E^1$ such that 
\begin{align}
\label{E:q1e}
V^q S_{1\cdot e}=S_f V^p.
\end{align}
Thus 
\[
\widehat S_{1\cdot e} \widehat V_{1|_e}(\xi\otimes e_k)
=\widehat S_{1\cdot e} (\xi\otimes e_{k+1|_e})
=(\widehat V^q)^*(S_ f V^{p+k+1|_e}\xi). 
\]
From
$
V^q S_{1\cdot e}=S_f V^p,
$
we have
$
V^q S_{1\cdot e}V^{1|_e}=S_f V^{p+1|_e}, 
$
implying 
\[
V^{q+1} S_e=S_f V^{p+1|_e}. 
\]
Hence 
\[
\widehat V \widehat S_e(\xi\otimes e_k)=\widehat V (\widehat V^{q+1})^* (S_f V^{p+1|_e + k})(\xi) =(\widehat V^{q})^* (S_f V^{p+1|_e + k}\xi).
\]
Therefore $\widehat V \widehat S_e=\widehat S_{1\cdot e} \widehat V_{1|_e}$ for all $e\in E^1$.

(iii) Notice that, from \eqref{E:AA} and (ii) above, one also has $\widehat V^q\widehat S_e=\widehat S_f \widehat V^{p}$, and so 
$
 \widehat S_ e (\widehat V^p)^*=(\widehat V^q)^*\widehat S_f.
$
Then
\begin{align}
\nonumber
 \widehat S_e^* \widehat S_e(\xi\otimes e_k)
&= \widehat S_e^* (\widehat V^q)^*(S_f V_{p+k}\xi)
\label{E:e*e}
= (\widehat V^p)^* \widehat S_f^* (S_f V^{p+k}\xi)\\
&= (\widehat V^p)^*  S_f^* (S_f V^{p+k}\xi) 
=(\widehat V^p)^*  S_{s(f)} V^{p+k}\xi.
\end{align}

Now, arbitrarily given $e_i\in E^1 v$ $(i=1,2)$ and $k\in \bZ$, one can choose $q_i\in \bN$ such that $q_1|_{e_1}= q_2|_{e_2}$ and $q_1|_{e_1}+k>0$.\footnote{For instance, let $q_1=N\cdot |\Omega_{e_1}|\cdot (\sum_{e\in \Omega_{e_2}}1|_{e})$, $q_2=N\cdot |\Omega_{e_2}|\cdot (\sum_{e\in \Omega_{e_1}}1|_e)$, and $N\in \bN$ big enough.}
Thus
\[
V^{q_i} S_{e_i}=S_{q_i\cdot e_i} V^{p}, \text{ where $p=q_1|_{e_1}=q_2|_{e_2}$ with $p+k\ge 0$}.  
\]
This implies 
\[
\widehat S_{e_i}(\xi\otimes e_k)=(\widehat V^{q_i})^* (S_{q_i\cdot e_i} V^{p+k}\xi) \qforal \xi\in \W. 
\] 
Notice that, from the choice $q_i$, one has $q_i\cdot v=v$ for $i=1,2$. 
Then from \eqref{E:e*e} one has 
\begin{align}
\label{E:wd}
 \widehat S_{e_i}^* \widehat S_{e_i}(\xi\otimes e_k)= (\widehat V^p)^*  S_{q_i\cdot v} V_{p+k}\xi= (\widehat V^p)^*  S_{v} V^{p+k}\xi.
 \end{align}
 Hence
 \[
  \widehat S_{e_1}^* \widehat S_{e_1}(\xi\otimes e_k)= \widehat S_{e_2}^* \widehat S_{e_2}(\xi\otimes e_k).
 \]
Therefore the definition of $\widehat S_v$ is independent of the choice of $e\in \DirectedGraph^1v$, and so $\widehat S_v$ is well-defined. 
Clearly, $\widehat S_v$ extends $S_v$. 

(iv) 
For $v\in \DirectedGraph^0$ and $e\in \DirectedGraph^1v$, on one hand, from \eqref{E:Sv} and \eqref{E:wd} we compute 
\begin{align*}
\widehat V \widehat S_v (\xi \otimes e_k)
&=\widehat V   \widehat S_e^* \widehat S_e (\xi \otimes e_k)
= \widehat V \widehat V_p^*  S_v V^{p+k}\xi\\
&= \widehat V_p^*  V S_v V^{p+k}\xi
  =  \widehat V_p^*  S_{1\cdot v} V^{p+k+1}\xi. 
\end{align*}
On the other hand, it follows from \eqref{E:Sv}, \eqref{E:q1e}, and \eqref{E:wd} that 
\[
\widehat S_{1\cdot v}\widehat V(\xi\otimes e_k)=\widehat S_{1\cdot e}^*\widehat S_{1\cdot e}\widehat V(\xi\otimes e_k) 
= (\widehat V^p)^*  S_{1\cdot v} V^{p+1+k}\xi. 
\]
Hence $\widehat V\widehat S_v= \widehat S_{1\cdot v}\widehat V$. 
\end{proof}


\begin{thm}
\label{T:Shifts}
Keep the same notation as above. Then $(\widehat \V, \widehat \S)$ is a representation of $(\bN, E)$, and is a  non-trivial dilation of $(\V, \S)$ from $\H$ to $\widehat \H$. Moreover, if $\S$ is CK, then so is $\widehat \S$. 
\end{thm}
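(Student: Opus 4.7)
\emph{Approach.} My plan is to verify, in order, (a) that the covariance conditions \ref{E:CD} and \ref{E:NC} hold for $(\widehat{\V}, \widehat{\S})$, (b) that $\widehat{\S}$ is a TCK $E$-family (becoming CK when $\S$ is), and (c) that $J(\H) \subsetneq \widehat{\H}$ carries the original representation invariantly. The central device will be a transfer principle exploiting the unitarity of $\widehat{V}$: for any finite linear combination of basic vectors $\eta = \sum_i c_i (\xi_i \otimes e_{k_i}) \in \widehat{\H}$, I will choose an integer $q \ge 0$ large enough that $q + k_i \ge 0$ for all $i$ and simultaneously a common multiple of all orbit periods $m_e = |\Omega_e|$ for edges $e$ appearing in the relation being checked. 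Such $q$ exist by Lemma~\ref{L:AA} together with assumption (A). Under this choice $q \cdot \mu = \mu$ for every $\mu$ involved, so the iterated Lemma~\ref{L:ExtSe} covariance $\widehat{V}^q \widehat{S}_\mu = \widehat{S}_\mu \widehat{V}^{q|_\mu}$ simplifies cleanly, and algebraic identities on $\widehat{\H}$ reduce to known identities on $\H$.

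\emph{Steps.} Condition \ref{E:CD} is Lemma~\ref{L:ExtSe}(ii), (iv); since $\widehat{V}$ is unitary, Proposition~\ref{prop.automatic.NC} supplies \ref{E:NC} automatically. Condition (TCK2) is the definition~\eqref{E:Sv}. For (TCK1), I will show each $\widehat{S}_v$ is a projection by applying formula~\eqref{E:wd} twice and invoking $S_v^2 = S_v$ on $\H$; pairwise orthogonality $\widehat{S}_v \widehat{S}_w = 0$ for $v \ne w$ follows similarly from $S_{p \cdot v} S_{p \cdot w} = 0$ on $\H$. The decisive computation for (TCK3) is
\[
\Bigl(\widehat{S}_v - \sum_{e \in vE^1} \widehat{S}_e \widehat{S}_e^* \Bigr)(\xi \otimes e_k) = (\widehat{V}^q)^* \Bigl(S_v - \sum_{e \in vE^1} S_e S_e^* \Bigr) V^{k+q}\xi,
\]
obtained by first computing $\widehat{S}_e \widehat{S}_e^*(\xi \otimes e_k) = (\widehat{V}^q)^*(S_e S_e^* V^{k+q}\xi)$ for $q$ a common multiple of $\{m_e : e \in vE^1\}$, using $\widehat{S}_e = \widehat{V}^{-q} \widehat{S}_e \widehat{V}^{q|_e}$ and unitarity of $\widehat{V}$. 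Pairing against $\xi \otimes e_k$ reduces the desired inequality to $\langle (S_v - \sum_e S_e S_e^*) V^{k+q}\xi, V^{k+q}\xi \rangle_\H \ge 0$, which holds by hypothesis; polarization and density extend this to $\widehat{\H}$. The CK case follows from the same identity: when $\S$ is CK the right-hand side vanishes, forcing $\widehat{S}_v = \sum_e \widehat{S}_e \widehat{S}_e^*$.

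\emph{Dilation.} Lemma~\ref{L:ExtSe}(i), (iii) together with the construction of $\widehat{V}$ give $\widehat{\S}|_{J(\H)} = \S$ and $\widehat{V}|_{J(\H)} = V$, so $J(\H)$ is invariant under $(\widehat{\V}, \widehat{\S})$ with restriction $(\V, \S)$, making this a dilation. The dilation is non-trivial because $V$ being a pure isometry forces $\W = \ker V^* \ne 0$, so $J(\H) = \W \otimes \ell^2(\bN) \subsetneq \W \otimes \ell^2(\bZ) = \widehat{\H}$.

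\emph{Main obstacle.} I anticipate the main difficulty to be verifying (TCK3). Although $\widehat{S}_e$ preserves $J(\H)$, its adjoint $\widehat{S}_e^*$ in general does not, so one cannot transport the inequality on $J(\H)$ by restriction alone. The resolution is the displayed identity above: by synchronizing all operators via a common multiple of the orbit periods and exploiting unitarity of $\widehat{V}$, one can factor out a single $(\widehat{V}^q)^*$ and express the defect $\widehat{S}_v - \sum_e \widehat{S}_e \widehat{S}_e^*$ on a basis vector as the image of the corresponding defect on $\H$, which is known to be $\ge 0$. Assumption (A) is crucial throughout to ensure the exponents $q|_e$ can be made arbitrarily large so that all the intermediate vectors land in $J(\H)$.
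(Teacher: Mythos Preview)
Your approach is sound and takes a different route from the paper. Where you unify (TCK3) and the CK statement through the single defect identity
\[
\Bigl(\widehat S_v-\sum_{e\in vE^1}\widehat S_e\widehat S_e^*\Bigr)(\xi\otimes e_k)=(\widehat V^q)^*\Bigl(S_v-\sum_{e\in vE^1}S_eS_e^*\Bigr)V^{k+q}\xi,
\]
the paper instead checks (TCK1) by computing $\widehat S_e\widehat S_e^*\widehat S_e=\widehat S_e$ directly on basis vectors, (TCK3) by proving $\langle\widehat S_e(\xi_1\otimes e_{k_1}),\widehat S_f(\xi_2\otimes e_{k_2})\rangle=0$ for $e\ne f$ via a common $q$, and the CK case by contradiction: any nonzero $h\perp\bigcup_e\widehat S_e\widehat\H$ is shifted into $J(\H)$ via $\widehat V^m$ and Nica-covariance, contradicting the CK relation there. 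Your transfer identity is more economical and yields CK for free; the paper's route is more hands-on in each step and never needs to analyze $\widehat S_e^*$ on $J(\H)$.

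One gap to close: from $\widehat S_e=\widehat V^{-q}\widehat S_e\widehat V^{q|_e}$ and unitarity you only obtain the conjugation $\widehat S_e\widehat S_e^*=\widehat V^{-q}(\widehat S_e\widehat S_e^*)\widehat V^q$; evaluating this on $\xi\otimes e_{k+q}\in J(\H)$ still requires knowing $\widehat S_e\widehat S_e^*|_{J(\H)}=JS_eS_e^*J^*$, which is precisely what you flag as the main obstacle. In fact that obstacle dissolves: $J(\H)$ \emph{reduces} each $\widehat S_e$. Indeed, for $\xi\in\W$ and $k<0$, writing $V^qS_e=S_fV^p$ with $p+k\ge0$, condition~\ref{E:NC} on $\H$ gives $V^{*q}S_fV^{p+k}\xi=S_eV^{*p}V^{p+k}\xi=S_eV^{*(-k)}\xi=0$ (since $\xi\in\ker V^*$), so $\widehat S_e(\xi\otimes e_k)=(\widehat V^q)^*(S_fV^{p+k}\xi)\in J(\H)^\perp$. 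Hence $\widehat S_e^*$ preserves $J(\H)$ after all, and your displayed identity follows immediately. With this in place your argument runs cleanly; the polarization remark is unnecessary, since for finite sums $\eta=\sum_i c_i(\xi_i\otimes e_{k_i})$ with a common $q$ one has $\langle T\eta,\eta\rangle=\langle T_0\,J^*\widehat V^q\eta,\,J^*\widehat V^q\eta\rangle\ge0$ directly, where $T$ and $T_0$ are the defects on $\widehat\H$ and $\H$ respectively.
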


\begin{proof}
Firstly, by Lemma \ref{L:ExtSe} (ii) and (iv), $(\widehat \V, \widehat \S)$ satisfies (SS).

Secondly, we show that $\widehat \S$ is a TCK $\DirectedGraph$-family. 

\underline{$\widehat \S$ satisfies (TCK1):}
For each $v\in \DirectedGraph^0$, to show that $\widehat S_v$ is an orthogonal projection, from \eqref{E:Sv} it suffices to verify that 
each $\widehat S_e$ is a partial isometry. For this, 
fix $\xi\otimes e_k$. 
Choose large enough $q\in \bN$ such that 
\[
V^q S_e=S_f V^{p}, \ p+k\ge 0.  
\]
So we also have $\widehat V^q\widehat S_e=\widehat S_f \widehat V^{p}$ and so 
\[
 \widehat S_ e (\widehat V^p)^*=(\widehat V^q)^*\widehat S_f.
\]
Now compute 
\begin{align*}
\widehat S_e \widehat S_e^* \widehat S_e(\xi\otimes e_k)
&=\widehat S_e \widehat S_e^* (\widehat V^q)^*(S_f V_{p+k}\xi)
=\widehat S_e (\widehat V^p)^* \widehat S_f^* (S_f V_{p+k}\xi)\\
&=\widehat S_e (\widehat V^p)^*  S_f^* (S_f V_{p+k}\xi)\ (\text{as }p+k\ge 0)\\
&=\widehat S_e (\widehat V^p)^*  S_{s(f)} V_{p+k}\xi
=(\widehat V^q)^*\widehat S_f   S_{s(f)} V_{p+k}\xi\\
&=(\widehat V^q)^* S_f   S_{s(f)} V_{p+k}\xi\ (\text{as }p+k\ge 0)\\
&=(\widehat V^q)^* S_f  V_{p+k}\xi
  =\widehat S_e(\xi\otimes e_k).
\end{align*}
Thus $\widehat S_e \widehat S_e^* \widehat S_e=\widehat S_e$. 

\underline{$\widehat \S$ satisfies (TCK2):} This is directly from the definition of $\widehat S_v$ (see \eqref{E:Sv}). 

Choose large enough $q\in \bN$ such that 
\[
V^q S_e=S_{\lambda_1} V^{p_1},\ V^q S_f=S_{\lambda_2} V^{p_2},\ p_1+k_1\ge 0,\ p_2+k_2\ge 0.  
\]
Notice that $\lambda_1=q\cdot e \ne q\cdot f=\lambda_2$ as $e\ne f$. 
Then 
\begin{align*}
\langle \widehat S_e(\xi_1\otimes e_{k_1}), \widehat S_f(\xi_2\otimes e_{k_2})\rangle 
&=\langle (\widehat V^q)^*(S_{\lambda_1} V^{p_1+k_1}\xi_1), (\widehat V^q)^*(S_{\lambda_2}V^{p_2+k_2}\xi_2)\rangle \\
&=\langle (S_{\lambda_1} V^{p_1+k_1}\xi_1), (S_{\lambda_2}V^{p_2+k_2}\xi_2)\rangle\\
&=0\ (\text{as }\lambda_1\ne \lambda_2\in E^1 \Rightarrow S_{\lambda_1}^*S_{\lambda_2}=0).
\end{align*}
Thus $\sum_{e\in v\DirectedGraph^1} \widehat S_e  \widehat S_e^*\le S_v$ for all $v\in \DirectedGraph^0$. 

Therefore $\widehat S$ is a TCK $\DirectedGraph$-family. 

Thirdly, since $\widehat V$ is unitary, $(\widehat \V, \widehat \S)$ is Nica-covariant by Proposition~\ref{prop.automatic.NC}. 

Finally, if $\S$ is CK, then so is $\widehat \S$. To the contrary, suppose that $\sum \widehat S_e\widehat S_e^*<I$. 
Since $\{\xi\otimes e_k: \xi\in \W, k\in \bZ\}$ is dense in $\widehat \H$,
there is $h\in \spn\{\xi\otimes e_k: \xi\in \W, k\in \bZ\}$ such that $h\perp \widehat S_e\widehat H$ for all $e\in E^1$. 
Let $m\in \bN$ be large enough so that 
\[
\widehat V_m h\in \H,\ \widehat V_m \widehat S_e=\widehat S_e V_{m_e}
\]
for some $m_e\in \bN$. Since $(\widehat V, \widehat S)$ is Nica-covariant, one has 
\[
 \widehat V_m^* \widehat S_e=\widehat S_e V_{m_e}^*.
\] 
Thus for all $x\in \H$
\begin{align*}
\langle \widehat V_m h, \widehat S_e x\rangle 
= \langle  h, \widehat V_m^*\widehat S_e x\rangle = \langle h, \widehat S_e\widehat V_{m_e}^* x\rangle 
= 0 \ (\text{as } \widehat S_e\widehat V_{m_e}^* x\in \widehat S_e\widehat \H).
\end{align*}
Thus $\widehat V_m h\perp \widehat S_e \widehat \H$ for all $e\in E^1$. 
But $\widehat V_m h \in \H$ and $\sum_{e\in E^1} \widehat S_e \widehat S_e^*|_\H=I_\H$ as $\H$ reduces $S$. So $\widehat V_m h=0$ and thus $h=0$. 
\end{proof}

As an immediate consequence of Theorem \ref{T:Shifts}, we obtain the following. 

\begin{cor}\label{cor.shift.not.max}
If $(\V,\S)$ is a representation of $(\bN, E)$ which is of pure + CK type or left regular, then there is a non-trivial dilation. 
\end{cor}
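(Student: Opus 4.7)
The plan is to deduce this corollary directly from Theorem~\ref{T:Shifts}. In both classes of representations singled out here, the isometric representation $\V$ is a pure isometry: for pure + CK this is built into the definition of case \ref{item.Wold.main.2} of Theorem~\ref{thm.Wold.main}, and for the left regular case the relevant space $\H_v^S$ is contained in $\H^S$, on which $\V$ is pure by the Wold decomposition. Consequently the hypothesis of Theorem~\ref{T:Shifts} is met, so the construction there produces a representation $(\widehat\V, \widehat\S)$ of $(\bN, E)$ on $\widehat\H = \W \otimes \ell^2(\bZ) \supsetneq \H$ that dilates $(\V,\S)$ and in which $\widehat V$ is the bilateral shift, hence a unitary.

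It then remains only to check that this dilation is non-trivial, i.e., that $(\widehat\V,\widehat\S)$ does not decompose as $(\V,\S) \oplus (\sigma,\tau)$ along the inclusion $\H \hookrightarrow \widehat\H$. Suppose for contradiction that it does; then $\H$ reduces $\widehat\V$, so in particular $\H$ is invariant under $\widehat V^*$. But for any nonzero $\xi \in \W$ one has $\widehat V^*(\xi \otimes e_0) = \xi \otimes e_{-1} \in \widehat\H \setminus \H$, a contradiction. Equivalently, one can observe that any direct summand of a unitary is itself unitary, whereas $V$ is a pure isometry with $\W = \ker V^* \ne 0$, ruling out such a splitting. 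This yields the desired non-triviality.

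There is no real obstacle at this stage: the genuine difficulty---defining $\widehat S_e$ and $\widehat S_v$ on the enlarged space compatibly with the self-similar dynamics, and verifying the TCK and (SS) relations under assumption \ref{item:A1}---has already been absorbed into Theorem~\ref{T:Shifts}. The content of Corollary~\ref{cor.shift.not.max} is simply the observation that the bilateral extension of a pure isometry cannot split off the original representation as a direct summand.
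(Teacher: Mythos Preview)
Your proposal is correct and matches the paper's approach: the paper states the corollary as an immediate consequence of Theorem~\ref{T:Shifts} with no further argument. Your explicit verification of non-triviality is technically redundant, since the statement of Theorem~\ref{T:Shifts} already asserts that $(\widehat\V,\widehat\S)$ is a \emph{non-trivial} dilation; but your argument (that a pure isometry cannot be a direct summand of a unitary) is exactly the reason this holds, and the paper's own proof of Theorem~\ref{T:Shifts} does not spell this out, so your remark is a welcome clarification rather than a deviation.
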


We want to point out that when $(\V,\S)$ is atomic, the dilation obtained in Theorem~\ref{T:Shifts} is also atomic in a natural way. This is in sharp contrast with the unitary + pure shift case discussed in the next subsection.

\begin{prop}\label{prop.atomic.dilation} Let $(\V,\S)$ be an atomic representation of pure + CK type on a Hilbert space $\H$ with atomic vectors $\{\xi_i\}_{i\in I}$. Then its dilation $(\widehat{\V}, \widehat{\S})$ constructed in Theorem~\ref{T:Shifts} on the Hilbert space $\widehat{\H}$ is also atomic where $\{\xi_i\}_{i\in I}$ are still atomic vectors. 
\end{prop}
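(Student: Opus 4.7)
The plan is to exhibit an explicit orthonormal basis of $\widehat{\H}$ that extends the original atomic basis $\{\xi_i\}_{i \in I}$ (identified with its image under the embedding $J$), and then to verify the three atomicity conditions directly using the formulas for $\widehat{V}$, $\widehat{S}_v$, and $\widehat{S}_e$ established in Lemma~\ref{L:ExtSe}.

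First I would identify an atomic basis of the wandering subspace $\W = \ker V^*$. Since $V\xi_i = \lambda_i \xi_{\nu(i)}$ with $\nu$ injective and $|\lambda_i| = 1$, a basis vector $\xi_j$ lies in $\ker V^*$ precisely when $j \notin \nu(I)$. Writing $I_\W := I \setminus \nu(I)$, the family $\{\xi_i : i \in I_\W\}$ is an orthonormal basis of $\W$, and hence $\{\widehat{\xi}_{(i,k)} := \xi_i \otimes e_k : (i,k) \in \widehat{I}\}$ is an orthonormal basis of $\widehat{\H}$, where $\widehat{I} := I_\W \times \bZ$. Purity of $V$ forces $\nu$ to have no finite cycles (a cycle in $\nu$ would produce an eigenvector of $V$), so for every $j \in I$ there exist unique $i \in I_\W$ and $k \geq 0$ with $j = \nu^k(i)$; iterating $V\xi_i = \lambda_i \xi_{\nu(i)}$ gives $\xi_j = \mu_j V^k \xi_i$ for some $\mu_j \in \bT$, whence $J(\xi_j) = \mu_j \widehat{\xi}_{(i,k)}$. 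Thus the original atomic vectors embed into the candidate new basis as unimodular scalar multiples.

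Then I would verify the three conditions in turn. For $\widehat{V}$, we have $\widehat{V}(\widehat{\xi}_{(i,k)}) = \widehat{\xi}_{(i, k+1)}$, which is a bijection on $\widehat{I}$ with trivial phases. For $\widehat{S}_v$, formula~\eqref{E:e*e} gives $\widehat{S}_v(\widehat{\xi}_{(i,k)}) = (\widehat{V}^p)^* S_v V^{p+k} \xi_i$ for sufficiently large $p$; since $V^{p+k}\xi_i$ is a unimodular scalar multiple of some basis vector $\xi_{j_0}$ of $\H$, and $S_v$ either kills or fixes $\xi_{j_0}$, this equals either $0$ or $\widehat{\xi}_{(i,k)}$, yielding the partition $\widehat{I} = \bigsqcup_{v \in E^0} \widehat{I}_v$. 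For $\widehat{S}_e$, the defining formula~\eqref{E:Se} gives $\widehat{S}_e(\widehat{\xi}_{(i,k)}) = (\widehat{V}^q)^*(S_f V^{p+k} \xi_i)$, and the factor $S_f V^{p+k} \xi_i$ is either $0$ or a unimodular scalar multiple of some basis vector $\xi_{j_1}$ of $\H$; identifying $\xi_{j_1}$ with a unimodular scalar multiple of some $\widehat{\xi}_{(i', k')}$ via $J$ and then applying $(\widehat{V}^q)^*$ to shift the second coordinate by $-q$, we obtain a unimodular scalar multiple of $\widehat{\xi}_{(i', k' - q)}$.

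Finally, injectivity of the resulting map $\widehat{\pi}_e : \widehat{I}_{s(e)} \to \widehat{I}_{r(e)}$ follows from the fact, established in Theorem~\ref{T:Shifts}, that $\widehat{S}_e$ is a partial isometry with initial projection $\widehat{S}_{s(e)}$. The main potential obstacle is confirming that the partition $\{\widehat{I}_v\}$ and the maps $\widehat{\pi}_e$ are independent of the auxiliary choices $p$, $q$, $f$ used throughout the construction, but this is already immediate from the well-definedness statements for $\widehat{S}_v$ and $\widehat{S}_e$ in Lemma~\ref{L:ExtSe}. All remaining verifications are routine symbol manipulations that do not require any new ideas.
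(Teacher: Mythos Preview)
Your proposal is correct and follows essentially the same approach as the paper: both arguments identify the atomic basis of $\W=\ker V^*$ as those $\xi_i$ with $i$ outside the range of $\nu$, take $\{\xi_i\otimes e_k : i\in I_\W,\, k\in\bZ\}$ as the new atomic basis of $\widehat\H$, and then verify atomicity of $\widehat S_e$ directly from formula~\eqref{E:Se} by observing that $S_f V^{p+k}\xi_i$ is a unimodular multiple of an original atomic vector, which in turn corresponds to some $\widehat V^{m'}\xi_{j'}$. The only cosmetic difference is that the paper first re-scales the original basis so that $\{\xi_i\}_{i\in I}=\{V^m\xi_j : j\in I_\W,\, m\ge 0\}$ exactly, whereas you carry the unimodular phases $\mu_j$ through the embedding $J$; your version is also slightly more thorough in that it explicitly checks the projection condition for $\widehat S_v$ and the injectivity of $\widehat\pi_e$, both of which the paper leaves implicit.
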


\begin{proof} Pick an orthonormal basis $\{k_i\}_{i\in I}$ for the atomic representation $(\V,\S)$ such that each $Vk_i, S_e k_i$ equals to some $\omega k_j$, $|\omega|=1$. The space $\W=\ker V^*$ has an orthogonal basis $\{k_j\}_{j\in J}$ where $k_j\notin \ran V$. By re-scaling each $k_i$ with an appropriate unit scalar, we may assume $\{k_i\}_{i\in I}=\{V^m k_j: j\in J, m\geq 0\}$. 

The space $\widehat{\H}$ in Theorem~\ref{T:Shifts} is given by $\W\otimes \ell^2(\mathbb{Z})$, which has an orthonormal basis $\{\widehat{V}^m k_j: j\in J, m\in\mathbb{Z}\}$. Notice for $m\geq 0$, $\widehat{V}^m k_j = V^m k_j$, so that each atomic vector $\{k_i\}_{i\in I}$ still belongs to the new orthonormal basis. 

Now fix this orthonormal basis $\{\widehat{V}^m k_j: j\in J, m\in\mathbb{Z}\}$. Clearly $\widehat{V}$ maps a basic vector to another basic vector. For each basic vector $\widehat{V}^m k_j$ and $e\in E^1$, choose $p,q\in\mathbb{N}$ and $f\in E^1$ such that $p+m\geq 0$ and $V^q S_e = S_f V^p$. We have:
\[
\widehat S_e(\widehat{V}^m k_j)=(\widehat V^q)^* (S_f V^{p+m} k_j)
\]
Here, since $(\V,\S)$ is atomic, $S_f V^{p+m} k_j$ is one of $k_i$, $i\in I$, we have $k_i=V^{m'} k_{j'}$ for some $m'\geq 0$ and $j'\in J$. Therefore, 
\[
\widehat S_e(\widehat{V}^m k_j)=(\widehat V^q)^* (S_f V^{p+m} k_j) = \widehat{V}^{m'-q} k_{j'}.
\]
Hence, $(\widehat{\V},\widehat{\S})$ is also atomic, with respect to the orthonormal basis $\{\widehat{V}^m k_j: j\in J, m\in\mathbb{Z}\}$.
\end{proof}

\subsection{The unitary + pure shift type}
This is Case (iii) in Theorem~\ref{thm.Wold.main}. 
Proposition~\ref{prop.atomic.dilation} shows that for an atomic pure + CK type representation, one can extend the atomic vectors to an atomic dilation. In contrast, we first demonstrate that this is not the case for unitary + pure shift type: one cannot simply extend the atomic vectors of an atomic unitary + pure shift type representation to an atomic dilation.  

\begin{eg} Consider the odometer self-similar graph $\bO_n$, 
and the atomic representation $c^1=(\U,\S)$ described in Section~\ref{sec.atomic.pureE}. Here, $\H=\ell^2(\mathbb{F}_n^+)$, and we have $U\xi_{\mu} = \xi_{1\cdot \mu}$ and $S_{e_i} \xi_\mu = \xi_{e_i \mu}$. We claim that there is no nontrivial atomic dilation of $c^1$ where $\{\xi_\mu: \mu\in\mathbb{F}_n^+\}$ are still the atom vectors. 

We first notice that since $U$ is unitary, any dilation $(\widehat{\U}, \widehat{\S})$ on $\widehat{\H}\supset \H$ must have $\H$ as a reducing subspace for $\widehat{\U}$. Therefore, if the dilation is non-trivial, $\H$ cannot reduce $\widehat{S}$. This implies that some $\widehat{S}_i$ must map an atomic vector $\xi_{-1}$ outside $\H$ into an atomic vector in $\H$. 

Since each atom vector $\xi_{\mu}$ with $|\mu|>0$ is in some $S_{e_i}\H$, only the wandering vector $\xi_e$ is not in the range of any $S_{e_i}$. Therefore, we must have an atom vector $\xi_{-1}$ and edge $e_i$ such that $\widehat{S}_{e_i} \xi_{-1} = \xi_e$. We now claim that this is impossible. 

Suppose $i<n$, then 
\[\widehat{U} \widehat{S}_{e_i} \xi_{-1} = \widehat{S}_{e_{i+1}} \xi_{-1}.\]
On the other hand, 
\[\widehat{U} \widehat{S}_{e_i} \xi_{-1} = \widehat{U} \xi_e = \xi_e.\]
This implies $\xi_e$ is in the range of both $\widehat{S}_{e_i}$ and $\widehat{S}_{e_{i+1}}$, a contradiction. 

Suppose $i=n$, then
\[\widehat{U} \widehat{S}_{e_n} \xi_{-1} = \widehat{U}\xi_e = \xi_e.\]
But the Toeplitz condition tells us 
\[\widehat{U} \widehat{S}_{e_n} \xi_{-1}=\widehat{S}_{e_1} \widehat{U}\xi_{-1}.\]
This again leads to $\xi_e$ to be in the range of both $\widehat{S}_{e_n}$ and $\widehat{S}_{e_1}$, a contradiction. 
\end{eg}

In light of this example, building dilations for unitary + pure shift type representations is much more complicated. We shall first prepare some notation. 

From the Wold decomposition (Theorem~\ref{thm.Wold.main}), for each equivalence class $\Omega_v=\{n\cdot v: n\in\mathbb{N}\}$, there could be a unitary + pure shift type component $(\U, \S)$, where $\U$ is unitary and $\S$ is a direct sum of some amplifications of $\oplus_{u\in\Omega_v} L_{E,u}$. Let us fix such a representation $(\U,\S)$ on a Hilbert space $\H$ for the orbit $\Omega_v$. 

Let us relabel the vertices in $\Omega_v$ as $\{v_0, v_1, \dots, v_{m-1}\}$ where $1\cdot v_i=v_{(i+1)\!\!\mod m}$.
For each $v_i\in \Omega_v$, define $\W_{v_i}=(S_{v_i}-\sum_{e\in v_i \DirectedGraph^1} S_e S_e^*)\H$ and $\H_{v_i}=\oplus_{\mu\in E^* v_i} S_\mu \W_{v_i}$. We have $\H=\oplus_{i=0}^{m-1} \H_{v_i}$. Also, denote $\W=\oplus_{i=0}^{m-1} \W_{v_i}$ 

Pick an arbitrary edge, WLOG, say $e_0$, with $s(e_0)=v_0$. Let its orbit under the $\mathbb{N}$-action be $\Omega_{e_0}=\{e_0, e_1, \dots, e_{q-1}\}$.
Here, $1\cdot e_j=e_{(j+1)\!\! \mod q}$. 
One may also notice that since $e_j=j\cdot e_0$, we have $s(e_j) = j\cdot s(e_0)=j\cdot v_0 = v_{j\!\! \mod m}$. In particular, we must have $m\mid q$. 

Now notice from Remark \ref{R:1|e m} that $1|_{e_j} = 1 \!\! \mod m$. 

Set
\[
M:=\sum_{j=0}^{q-1} 1|_{e_j}.
\]
 Alternatively, $M$ can also be computed as
\[M= \sum_{j=0}^{q-1} 1|_{e_j}= \sum_{j=0}^{q-1} 1|_{j\cdot e_0}=q|_{e_0}.\]
By Assumption~\ref{item:A1}, at least one of $1|_{e_i}$ is strictly positive. By rearranging $v_i$ and $e_j$, we may assume without loss of generality that $1|_{e_{q-1}}\neq 0$. In particular, $M>0$.

Define 
\[\widehat{\H} =\H\otimes \mathbb{C}^M.\]
Take the standard orthonormal basis $\{w_j: 0\leq j\leq M-1\}$ on $\mathbb{C}^M$, we denote $\H_j = \H\otimes w_j \cong \H$, and thus $\widehat{\H}=\oplus_{j=0}^{M-1} \H_j$. 

For each $\mu\in E^*$, define $\widehat{S}_\mu (\xi\otimes w_j)=(S_\mu\xi) \otimes w_j$ (in other words, $\widehat{S}_\mu = S_\mu \otimes I$ on $\widehat{H}=\H\otimes \mathbb{C}^M$). Then it is clear that $\widehat{\S}$ is a pure shifty type TCK $E$-family. 

Now each $\H_j=\oplus_{\mu\in E^*} S_\mu \W_j$, where $\W_j = \W\otimes w_j \cong \W$. Let $\widehat\W=\oplus \W_j$, which is also the wandering space for $\widehat{\S}$. Let us define a unitary $\widehat{U}_0$ on $\widehat\W$ by the following formula:
\[
\widehat{U}_0 (\xi\otimes w_j) = \begin{cases}
    (U\xi) \otimes w_{j+1}, \quad & \text{ if } 0\leq j<M-1, \\
    (U^{*(M-q-1)}\xi) \otimes w_0, \quad & \text{ if } j=M-1.
\end{cases}
\]
\begin{lem} For each $0\leq i\leq m-1$, $\widehat{U}_0$ is a unitary from $\widehat{S}_{v_i}\widehat\W$ to $\widehat{S}_{v_{i+1}} \widehat\W$.    
\end{lem}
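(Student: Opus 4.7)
The plan is to analyze $\widehat U_0$ on each summand of the decomposition
\[
\widehat S_{v_i}\widehat\W \;=\; \W_{v_i}\otimes\bC^M \;=\; \bigoplus_{j=0}^{M-1}\W_{v_i}\otimes w_j,
\]
and verify that every such summand is sent unitarily into the correct summand of $\widehat S_{v_{i+1}}\widehat\W$. The global unitarity of $\widehat U_0$ on $\widehat\W$ will be a byproduct of the same analysis, since $\widehat U_0$ will be seen to cyclically permute the direct summands $\W\otimes w_j$ and to act as a unitary on each; so the real content is the identification of target subspaces.

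Recall (as used in the proof of Theorem~\ref{thm.Wold.main}\ref{item.Wold.main.3}) that because $(U,\S)$ is of unitary $+$ pure shift type, $U$ restricts to a unitary bijection $\W_{v_k}\to\W_{v_{k+1}}$ for every $k$, with indices taken modulo $m$. Consequently, for any integer $n$, $U^n$ is a unitary from $\W_{v_k}$ onto $\W_{v_{k+n\bmod m}}$, and $U^{*n}=U^{-n}$ is a unitary from $\W_{v_k}$ onto $\W_{v_{k-n\bmod m}}$.

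For $0\le j<M-1$, the definition $\widehat U_0(\xi\otimes w_j)=(U\xi)\otimes w_{j+1}$ together with the above immediately yields a unitary $\W_{v_i}\otimes w_j\to\W_{v_{i+1}}\otimes w_{j+1}$. For $j=M-1$, the definition is $\widehat U_0(\xi\otimes w_{M-1})=(U^{*(M-q-1)}\xi)\otimes w_0$, so what must be checked is that $U^{*(M-q-1)}$ maps $\W_{v_i}$ onto $\W_{v_{i+1}}$. By the previous paragraph this amounts to the congruence $-(M-q-1)\equiv 1\pmod m$, i.e.\ $M\equiv q\pmod m$.

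The verification of this congruence is the one nontrivial step, and is where Remark~\ref{R:1|e m} enters. For each $0\le j\le q-1$ we have $s(e_j)=j\cdot v_0=v_{j\bmod m}\in\Omega_v$, so $|\Omega_{s(e_j)}|=m$; Remark~\ref{R:1|e m} therefore gives $1|_{e_j}\equiv 1\pmod m$. Summing from $j=0$ to $j=q-1$ yields $M=\sum_{j=0}^{q-1}1|_{e_j}\equiv q\pmod m$, as required. Combining the two cases, $\widehat U_0$ sends $\W_{v_i}\otimes w_j$ unitarily onto $\W_{v_{i+1}}\otimes w_{(j+1)\bmod M}$ for every $j$, and taking the direct sum over $j$ gives the desired unitary $\widehat S_{v_i}\widehat\W\to\widehat S_{v_{i+1}}\widehat\W$. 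The only genuine obstacle is the modular identity $M\equiv q\pmod m$, which is precisely what dictates the exponent $M-q-1$ in the wrap-around rule defining $\widehat U_0$.
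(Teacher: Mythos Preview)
Your proof is correct and follows essentially the same approach as the paper's: both analyze $\widehat U_0$ on the summands $\W_{v_i}\otimes w_j$, handle the cases $j<M-1$ and $j=M-1$ separately, and reduce the latter to the congruence $M\equiv q\pmod m$ obtained by summing $1|_{e_j}\equiv 1\pmod m$ from Remark~\ref{R:1|e m}. You are slightly more explicit than the paper about why $\widehat U_0$ is globally unitary (cyclic permutation of the $w_j$-summands with a unitary on each), but the argument is otherwise the same.
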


\begin{proof} For each $v_i$, $\widehat{S}_{v_i} \widehat{\W} = (S_{v_i} \W) \otimes \mathbb{C}^M$, which contains all vectors of the form $\xi\otimes w_j$, $\xi\in S_{v_i}\W$.  

Take any $\xi\otimes w_j\in \widehat{S}_{v_i} \widehat\W$ where $\xi\in S_{v_i}\W$.  If $0\leq j<M-1$, we have 
\[\widehat{U}_0 (\xi\otimes w_j) = (U\xi) \otimes w_{j+1},\]
where $U\xi\in S_{v_{i+1}} \W$ and thus $\widehat{U}_0 (\xi\otimes w_j) \in \widehat{S}_{v_{i+1}} \widehat\W$.

When $j=M-1$, 
\[\widehat{U}_0 (\xi\otimes w_j) =(U^{*(M-q-1)}\xi) \otimes w_0.\]
Notice for $\xi \in S_{v_i} \W$, $U^k \xi \in S_{v_{i+k \!\!\mod m}} \W$. 
Notice $1_{e_j} = 1 \!\!\mod m$ and thus 
\[M = \sum_{j=0}^{q-1} 1|_{e_j} = q \!\!\mod m.\]
Thus $-(M-q-1)=1 \!\!\mod m$ and we have $U^{*(M-q-1)}\xi\in  S_{v_{i+1}} \W$ and thus $\widehat{U}_0 (\xi\otimes w_j) \in \widehat{S}_{v_{i+1}} \widehat\W$ in this case as well. 
\end{proof}

Now, by Lemma~\ref{lemma:key_lemma}, there is a unique unitary $\widehat{U}$ on $\widehat\H$ such that $(\widehat\U, \widehat\S)$ is a unitary + pure shift type representation and $\widehat\U |_{\widehat\W} = \widehat{U}_0$. Notice from the proof of Lemma~\ref{lemma:key_lemma}, we can write down the unitary $\widehat{U}$ explicitly. Since $\widehat\S$ is a pure shift type $E$-family, $\widehat\H=\bigoplus_{\mu\in E^*} \widehat{S}_\mu \W$. The map $\widehat{U}$ is defined by
\[\widehat{U} (\widehat{S}_\mu\widehat{\xi}) = \widehat{S}_{1\cdot \mu} \widehat{U}_0^{1|_\mu}\widehat\xi, \quad \widehat{\xi}\in\widehat\W.\]
We now embed $\H$ inside $\widehat\H$.

\begin{lem} Let $J: \W \to \widehat{\H}$ be defined by
\[J\xi = \frac{1}{\sqrt{q/m}} \sum_{j=0}^{q-1}  \left(S_{e_j} U^{j|_{e_0}-j} \xi\right)\otimes w_{j|_{e_0}}.\]
Then $J$ is an isometry. Moreover, this can be extended to an isometric embedding $\widehat{J}:\H\to \widehat{\H}$ by
\[\widehat{J}(S_\mu \xi) = \widehat S_\mu (J\xi), \quad \xi\in \W.\]
\end{lem}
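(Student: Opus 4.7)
My approach is to treat the two claims separately.

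For the first claim (that $J$ is isometric on $\W$), I would compute $\langle J\xi, J\eta\rangle$ directly. Expanding gives
\[
\langle J\xi, J\eta\rangle = \frac{m}{q}\sum_{j,k=0}^{q-1} \langle S_{e_j} U^{j|_{e_0}-j}\xi,\, S_{e_k} U^{k|_{e_0}-k}\eta\rangle\,\delta_{j|_{e_0},\,k|_{e_0}}.
\]
Since the edges in $\Omega_{e_0}=\{e_0,\dots,e_{q-1}\}$ are pairwise distinct, $S_{e_j}^*S_{e_k}=\delta_{jk}\,S_{s(e_j)}$, so the double sum collapses to the diagonal no matter how the scalar indices $w_{j|_{e_0}}$ may coincide. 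On the diagonal I would invoke Remark~\ref{R:1|e m} to conclude $j|_{e_0}\equiv j \pmod m$, so that $U^{j|_{e_0}-j}$ preserves every $\W_{v_i}$; writing $\xi=\sum_i\xi_i$ and $\eta=\sum_i\eta_i$ with $\xi_i,\eta_i\in\W_{v_i}$, the projection $S_{s(e_j)}=S_{v_{j\bmod m}}$ singles out the summand $i\equiv j \pmod m$ and unitarity of $U$ yields $\langle\xi_i,\eta_i\rangle$. Exactly $q/m$ indices $j\in\{0,\dots,q-1\}$ lie in each residue class modulo $m$, so the normalization $m/q$ gathers these into $\sum_i\langle\xi_i,\eta_i\rangle=\langle\xi,\eta\rangle$.

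For the second claim, I would invoke the Wold decomposition for the pure shift family $\S$ on $\H$: every $\eta\in\H$ admits a unique orthogonal expansion $\eta=\sum_\mu S_\mu\xi_\mu$ with $\xi_\mu\in\W_{s(\mu)}$, and the prescription $\widehat J\eta:=\sum_\mu\widehat S_\mu J\xi_\mu$ is therefore well defined and linear. For isometry, I would expand
\[
\bigl\|\widehat J\eta\bigr\|^2 = \sum_{\mu,\nu}\langle J\xi_\mu,\widehat S_\mu^*\widehat S_\nu J\xi_\nu\rangle
\]
and compare with $\|\eta\|^2=\sum_\mu\|\xi_\mu\|^2$. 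The TCK relations for $\widehat\S$ split $\widehat S_\mu^*\widehat S_\nu$ into three cases: equal to $\widehat S_{s(\mu)}$ when $\mu=\nu$, equal to $\widehat S_{\nu''}$ when $\nu=\mu\nu''$ with $|\nu''|\ge 1$ (and symmetrically when $\mu$ extends $\nu$), or zero. The diagonal case reduces to the identity $\langle J\xi,\widehat S_{s(\mu)}J\xi\rangle=\|\xi\|^2$ for $\xi\in\W_{s(\mu)}$, and the off-diagonal cases reduce to showing $\widehat S_{\nu''}^* J\xi=0$ for every path $\nu''$ with $|\nu''|\ge 1$ and every $\xi$ in the relevant wandering piece.

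I expect the main obstacle to lie in the off-diagonal vanishing. Each summand of $J\xi$ sits in $\widehat S_{e_j}\widehat\W$ for some $e_j\in\Omega_{e_0}$, i.e., one level deep in the Wold decomposition of $\widehat\H$ under $\widehat\S$. Applying $\widehat S_{\nu''}^*$ with $|\nu''|\ge 1$ either fails to match the one-edge prefix $e_j$ (yielding zero directly) or peels off $e_j$ and lands in $\widehat\W$, after which the wandering property combined with any remaining adjoint action annihilates the contribution. Making this bookkeeping precise, and verifying the diagonal identity by the same counting argument already used for the isometry of $J$, rests on the orbit structure of $\Omega_{e_0}$ together with assumption~\ref{item:A1}.
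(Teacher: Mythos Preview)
Your treatment of the first claim is correct and essentially matches the paper's: decompose $\xi=\sum_i\xi_i$ with $\xi_i\in\W_{v_i}$, use $j|_{e_0}\equiv j\pmod m$ so that $U^{j|_{e_0}-j}$ preserves each $\W_{v_i}$, and count $q/m$ indices per residue class.

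For the second claim your reduction has a gap. The statement ``$\widehat S_{\nu''}^* J\xi=0$ for every $|\nu''|\ge 1$'' is false when $\nu''$ is a single edge from the orbit $\Omega_{e_0}$. Indeed, for $\nu''=e_j$ the $j$-th summand of $J\xi$ contributes
\[
\widehat S_{e_j}^*\bigl((S_{e_j}U^{j|_{e_0}-j}\xi)\otimes w_{j|_{e_0}}\bigr)=(S_{s(e_j)}U^{j|_{e_0}-j}\xi)\otimes w_{j|_{e_0}}\in\widehat\W,
\]
which is nonzero whenever $S_{v_{j\bmod m}}\xi\ne 0$. Your own description (``peels off $e_j$ and lands in $\widehat\W$, after which \ldots any remaining adjoint action annihilates the contribution'') does not cover $|\nu''|=1$, since then there is no remaining adjoint to apply.

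What you actually need is only the weaker statement $\langle\widehat S_{\nu''}^*J\xi_\mu,\,J\xi_\nu\rangle=0$, and this does hold in the missing case because $\widehat S_{e_j}^*J\xi_\mu\in\widehat\W$ while $J\xi_\nu\in\bigoplus_k\widehat S_{e_k}\widehat\W\perp\widehat\W$. The paper sidesteps this case analysis altogether: it observes that every summand of $\widehat S_\mu J\xi$ lies in a piece $S_{\mu e_j}\W\otimes\bC w_{j|_{e_0}}$ of the Wold decomposition of $\widehat\H$, and since the path $\mu e_j$ determines $j$ (as its last edge) and hence $\mu$, these pieces are pairwise orthogonal over all choices of $(\mu,j)$. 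Thus $\{\widehat S_\mu(J\W):\mu\in E^*\}$ is an orthogonal family and $\widehat J$ is isometric without any cross-term bookkeeping.
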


\begin{proof} First of all, on $\H$, we have $\sum_{i=0}^{m-1} S_{v_i}=I$. Take any $\xi\in\W$, define $\xi_i=S_{v_i} \xi$. We have $\xi=\sum_{i=0}^{m-1} \xi_i$ and $\xi_i\in S_{v_i}\W$. Moreover, since $\xi_i$ are orthogonal to each other, 
\[\|\xi\|^2=\sum_{i=0}^{m-1} \|\xi_i\|^2.\]

For each $0\leq j\leq q-1$, $j_{e_0}-j = 0 \!\!\mod m$. Therefore, $U^{j|_{e_0}-j} \xi_i \in S_{v_i}\W$ and since $s(e_j)=v_{j\!\!\mod m}$, we have,
\[S_{e_j} U^{j|_{e_0}-j} \xi = \sum_{i=0}^{m-1} S_{e_j} U^{j|_{e_0}-j} \xi_i = S_j U^{j|_{e_0}-j} \xi_{j \operatorname{mod} m}.\]
Hence, we can write
\[J\xi = \frac{1}{\sqrt{q/m}} \sum_{j=0}^{q-1}  \left(S_{e_j} U^{j|_{e_0}-j} \xi_{j \operatorname{mod} m}\right)\otimes w_{j|_{e_0}}.\]
Since $S_{e_j}$ has orthogonal ranges, we have the vectors $\left\{\left(S_{e_j} U^{j|_{e_0}-j} \xi_{j \operatorname{mod} m}\right)\otimes w_{j|_{e_0}}\right\}$ are orthogonal to one another. Since $U$ is unitary and $S_{e_j}$ is isometric on $S_{v_{j\!\!\mod m}} \W$, we have 
\[\left\|\left(S_{e_j} U^{j|_{e_0}-j} \xi_{j \operatorname{mod} m}\right)\otimes w_{j|_{e_0}}\right\| = \|\xi_j\|.\]
Therefore,
\[\|J\xi\|^2 = \frac{1}{q/m} \sum_{j=0}^{q-1} \|\xi_{j \operatorname{mod} m}\|^2=\|\xi\|^2.\]
This proves that $J$ is an isometry on $\W$. Now define  $\widehat{J}:\H=\bigoplus_{\mu\in E^*} S_\mu\W \to \widehat{\H}$ by
\[\widehat{J}(S_\mu \xi) = \widehat S_\mu (J\xi), \quad \xi\in \W.\]
The map $\widehat{J}$ is clearly isometric from $S_\mu \W$ to $\widehat{S}_\mu (J\W)$. The space $\widehat{S}_\mu (J\W)$ consists of vectors of the form 
\[\frac{1}{\sqrt{q/m}} \sum_{j=0}^{q-1}  \left(S_{\mu}S_{e_j} U^{j|_{e_0}-j} \xi\right)\otimes w_{j|_{e_0}},\quad \xi\in\W.\]
For $\mu\neq\nu \in E^*$, the range of $S_\mu S_{e_j}$ is always orthogonal to that of $S_\nu S_{e_j}$. Therefore, the spaces $\widehat{S}_\mu (J\W)$ are pairwise orthogonal. Hence, $\widehat{J}$ is an isometry.
\end{proof}

We now claim that $(\widehat{\U},\widehat{\S})$ is a dilation of $(\U,\S)$ via $\widehat{J}$.

\begin{prop}\label{prop.unitary.pure} We have $U=\widehat{J}^* \widehat{U} \widehat{J}$ and $\S_\mu=\widehat{J}^* \widehat S_\mu \widehat{J}$ for all $\mu\in E^*$. Moreover, $(\U,\S)$ is not maximal. 
\end{prop}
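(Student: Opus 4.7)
The plan is to verify the two stronger intertwining relations
\[
\widehat{S}_\mu\widehat{J}=\widehat{J}S_\mu \quad (\mu\in E^{*})
\qand
\widehat{U}\widehat{J}=\widehat{J}U;
\]
since $\widehat{J}^{*}\widehat{J}=I_{\H}$, compressing on the left by $\widehat{J}^{*}$ then gives the required formulas $S_\mu=\widehat{J}^{*}\widehat{S}_\mu\widehat{J}$ and $U=\widehat{J}^{*}\widehat{U}\widehat{J}$. The first relation is essentially the defining formula for $\widehat{J}$: for any $\nu\in E^{*}$ and $\xi\in\W$,
\[
\widehat{S}_\mu\widehat{J}(S_\nu\xi)=\widehat{S}_\mu\widehat{S}_\nu J\xi=\widehat{S}_{\mu\nu}J\xi=\widehat{J}(S_{\mu\nu}\xi)=\widehat{J}(S_\mu S_\nu\xi),
\]
which extends to $\H$ by linearity and density.

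For the second intertwining I would first isolate the key identity $\widehat{U}J\xi=JU\xi$ for $\xi\in\W$. Once this is in hand, the self-similar covariances on both sides give, for $\eta=S_\mu\xi$,
\[
\widehat{U}\widehat{J}(S_\mu\xi)=\widehat{U}\widehat{S}_\mu J\xi=\widehat{S}_{1\cdot\mu}\widehat{U}^{1|_\mu}J\xi=\widehat{S}_{1\cdot\mu}J(U^{1|_\mu}\xi)=\widehat{J}(US_\mu\xi),
\]
where iterating $\widehat{U}J=JU$ on $\W$ yields $\widehat{U}^{k}J=JU^{k}$. To prove the key identity, I would rewrite each summand of $J\xi$ as $\widehat{S}_{e_j}(\eta_j\otimes w_{j|_{e_0}})$ with $\eta_j:=U^{j|_{e_0}-j}\xi$, apply the covariance $\widehat{U}\widehat{S}_{e_j}=\widehat{S}_{1\cdot e_j}\widehat{U}^{1|_{e_j}}$, and evaluate $\widehat{U}_0^{1|_{e_j}}$ on the wandering vector $\eta_j\otimes w_{j|_{e_0}}$.

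The computation splits into two cases. For $0\le j<q-1$, the iteration stays strictly below the wrap-around index $M-1$ (here the normalization $1|_{e_{q-1}}\ge 1$ is essential), so $\widehat{U}_0^{1|_{e_j}}(\eta_j\otimes w_{j|_{e_0}})=(U^{1|_{e_j}}\eta_j)\otimes w_{(j+1)|_{e_0}}$, which after simplifying $U^{1|_{e_j}}\eta_j=U^{(j+1)|_{e_0}-j}\xi$ matches the $(j{+}1)$-st summand of $JU\xi$. The subtle case $j=q-1$ engages the ``twist'' in the definition of $\widehat{U}_0$ on $w_{M-1}$: combined with $1\cdot e_{q-1}=e_0$, the rule $\widehat{U}_0(\,\cdot\otimes w_{M-1})=(U^{*(M-q-1)}\,\cdot\,)\otimes w_{0}$ produces precisely the $j{=}0$ summand of $JU\xi$, after the exponent of $U$ collapses to $1$ via the identity $(q-1)|_{e_0}+1|_{e_{q-1}}=M$. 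This boundary/wrap-around calculation is the crux and the main obstacle; indeed the somewhat opaque choice $U^{*(M-q-1)}$ in the definition of $\widehat{U}_0$ is tuned precisely so that this match works.

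For non-maximality it suffices to show the dilation is non-trivial, i.e., that $\widehat{J}(\H)$ does not reduce $(\widehat{\U},\widehat{\S})$. Since $U$ and $\widehat{U}$ are both unitary, the intertwining $\widehat{U}\widehat{J}=\widehat{J}U$ makes $\widehat{J}(\H)$ automatically $\widehat{U}$-reducing, so any obstruction must come from $\widehat{\S}$. I would observe that $\widehat{J}(\H)$ is orthogonal to the wandering space $\widehat{\W}$ of $\widehat{\S}$: each summand $(S_{e_j}\eta_j)\otimes w_{j|_{e_0}}$ of $J\xi$ lies in the range of some $\widehat{S}_{e_j}$, hence so does every $\widehat{S}_\nu J\xi=\widehat{J}(S_\nu\xi)$. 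If $\widehat{J}(\H)$ were $\widehat{\S}$-reducing, then (via the unitary equivalence with $\S$) its wandering subspace would be $\widehat{J}(\W)$, which would have to sit inside $\widehat{\W}\cap\widehat{J}(\H)=\{0\}$; this would force $\W=\{0\}$, contradicting the non-triviality of the pure shift $\S$. Hence the dilation is non-trivial and $(\V,\S)$ is not maximal.
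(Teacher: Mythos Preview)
Your proposal is correct and follows essentially the same route as the paper: the intertwining $\widehat S_\mu\widehat J=\widehat J S_\mu$ is immediate from the definition, and the heart of the matter is the identity $\widehat U J\xi=J U\xi$ on $\W$, which you compute by the same case split $j<q-1$ versus $j=q-1$ (and your observation that the wrap-around term is engineered precisely by the choice $U^{*(M-q-1)}$ is exactly the point).

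There are two small places where you are actually a bit more direct than the paper. First, to pass from $\W$ to all of $\H$ the paper invokes the uniqueness part of Lemma~\ref{lemma:key_lemma}, whereas you simply iterate $\widehat U J=JU$ (using that $U$ preserves $\W$) and then apply the covariance $\widehat U\widehat S_\mu=\widehat S_{1\cdot\mu}\widehat U^{1|_\mu}$ and $US_\mu=S_{1\cdot\mu}U^{1|_\mu}$ on each summand $S_\mu\W$; this is cleaner and avoids having to check separately that $\widehat J^{*}\widehat U\widehat J$ is unitary before applying the lemma. Second, the paper does not spell out why the dilation is non-trivial, while you supply an argument: $\widehat J(\H)\perp\widehat\W$, so if $\widehat J(\H)$ reduced $\widehat\S$ its wandering space would be $\widehat\W\cap\widehat J(\H)=\{0\}$, forcing $\W=\{0\}$. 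That argument is sound (the key point being that a reducing subspace of a pure shift is again a pure shift, hence is generated by its wandering part).
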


\begin{proof} First, for any $\mu\in E^1$, $\nu\in E^*$, and $\xi\in\W$, we have:
\[
    \widehat{S}_\mu \widehat{J} (S_\nu \xi) 
    = \widehat{S}_\mu \widehat{S}_\nu J\xi 
    = \widehat{J} S_{\mu\nu} \xi 
    = \widehat{J} S_\mu (S_\nu\xi).
\]
Therefore, $\widehat{J}^* \widehat{S}_\mu \widehat{J} = S_\mu$. 

Next, we first verify that $U=\widehat{J}^* \widehat{U} \widehat{J}$ on $\W$. Pick any $\xi\in \W$, we have
\begin{align*}
    \widehat{U} \widehat{J} \xi &= \widehat{U}  \left(\frac{1}{\sqrt{q/m}} \sum_{j=0}^{q-1}  \left(S_{e_j} U^{j|_{e_0}-j} \xi\right)\otimes w_{j|_{e_0}}\right) \\
    &= \frac{1}{\sqrt{q/m}} \left(\sum_{j=0}^{q-1}  \widehat{S}_{1\cdot e_j} \widehat{U}_0^{1|_{e_j}} \left(U^{j|_{e_0}-j} \xi\otimes w_{j|_{e_0}}\right)\right).
\end{align*}
For $j<q-1$, $1|_{e_j}+j|_{e_0}=(j+1)|_{e_0}\leq M-1$ since we assumed $1|_{e_{q-1}}>0$. Therefore, by the choice of $\widehat{U}_0$, we have:
\[\widehat{U}_0^{1|_{e_j}} \left(U^{j|_{e_0}-j} \xi\otimes w_{j|_{e_0}}\right) = 
U^{j|_{e_0}-j+1|_{e_j}} \xi\otimes w_{j|_{e_0}+1|_{e_j}}
= U^{(j+1)|_{e_0}-j} \xi\otimes w_{(j+1)|_{e_0}}.\]
For $j=q-1$, 
\begin{align*}
    \widehat{U}_0^{1|_{e_{q-1}}} \left(U^{(q-1)|_{e_0}-(q-1)} \xi\otimes w_{(q-1)|_{e_0}}\right) 
    &= 
\widehat{U}_0 \left(U^{(q-1)|_{e_0}-(q-1)+(1|_{e_{q-1}}-1)} \xi\otimes w_{(q-1)|_{e_0}+(1|_{e_{q-1}}-1)}\right) \\
&=
\widehat{U}_0 \left(U^{q|_{e_0}-q} \xi\otimes w_{q|_{e_0}-1)}\right) \\
&= \widehat{U}_0 \left(U^{M-q} \xi\otimes w_{M-1)}\right)\\
&= U^{*(M-q-1)} U^{M-q}\xi \otimes w_0 \\
&= U\xi \otimes w_0.
\end{align*}
Therefore,
\begin{align*}
    \widehat{U} \widehat{J} \xi &= \frac{1}{\sqrt{q/m}}
\left(U\xi\otimes w_0 + \sum_{j=0}^{q-2} U^{(j+1)|_{e_0}-j} \xi\otimes w_{(j+1)|_{e_0}} \right) \\
&= \frac{1}{\sqrt{q/m}}
\left(U^{0|_{e_0}-0} U\xi\otimes w_{0|_{e_0}} + \sum_{j'=1}^{q-1} U^{j'|_{e_0}-j'+1} \xi\otimes w_{j'|_{e_0}} \right) \\
&= \frac{1}{\sqrt{q/m}}\left(\sum_{j'=0}^{q-1} U^{j'|_{e_0}-j'} (U\xi)\otimes w_{j'|_{e_0}} \right) \\
&= \widehat{J} (U\xi).
\end{align*}

Finally, since $\widehat{J}^* (\widehat{\U}, \widehat{\S}) \widehat{J}$ is a unitary + pure shift type representation, where $\widehat{J}^* \widehat{\S} \widehat{J}=\S$ and $\widehat{J}^* \widehat{\U}\widehat{J}$ coincides with $\U$ on the wandering space $\W$, Lemma~\ref{lemma:key_lemma} establishes the uniqueness of such representations and thus $\widehat{J}^* \widehat{\U}\widehat{J} = \U$. 
\end{proof}

\begin{eg} Let us consider the following graph $E$:
\begin{figure}[th]
    \centering

    \begin{tikzpicture}[scale=1, every node/.style={scale=1}]
        
        \draw[-Stealth] plot [smooth]  coordinates 
        {(0,0) (0.3,0.1) (1, 0.2) (1.7, 0.1) (2,0)};
        \draw[-Stealth] plot [smooth]  coordinates {(2,0) (1.7,-0.1) (1, -0.2) (0.3, -0.1) (0,0)};

        \draw[-Stealth] plot [smooth]  coordinates 
        {(0,0) (-0.4,0.2) (-0.5,0) (-0.4,-0.2) (0,0)};
        \draw[-Stealth] plot [smooth]  coordinates 
        {(2,0) (2+0.4,0.2) (2+0.5,0) (2+0.4,-0.2) (2,0)};
        \node at (0,0) {$\bullet$};
        \node at (2,0) {$\bullet$};
        \node at (-0.7,0) {$e_0$};
        \node at (2.7,0) {$e_1$};
        \node at (1, 0.4) {$f_0$};
        \node at (1, -0.4) {$f_1$};
        \node at (0, 0.25) {$v_0$};
        \node at (2, 0.25) {$v_1$};
    \end{tikzpicture}
\end{figure}

Define a self-similar $\mathbb{N}$-action on $E$ by
\[1\cdot v_i=v_{1-i},\quad 1\cdot e_i = e_{1-i},\quad 1\cdot f_i=f_{1-i},\]
and
\[1|_{e_0}=3,\quad 1|_{e_1}=1,\quad 1|_{f_0}=3,\quad 1|_{f_1}=3.\]
Then we obtain a self-similar graph $(\bN, \DirectedGraph)$. 

Let $\S$ be the left-regular representation of $E$. Let $\delta_{v_0}$ and $\delta_{v_1}$ be the two wandering vectors for $S_{v_0}$ and $S_{v_1}$ respectively. 
Then $\W=\spn\{\delta_0,\delta_1\}$ is a wandering space for $\S$. Define a unitary $U_0$ on $\W$ by $U_0\delta_{v_0}=\delta_{v_1}$ 
and $U_0\delta_{v_1}=\lambda\delta_{v_0}$ for some $\lambda\in\mathbb{T}$. By Lemma~\ref{lemma:key_lemma}, there is a unique unitary + pure shift type representation $(\U,\S)$ such that $U|_\W=U_0$. This $U$ is defined uniquely using the formula
\[U(S_\mu \delta_{v_i}) = S_{1\cdot \mu} U_0^{1|_\mu} \delta_{v_i}\quad \text{for } \mu\in E^* \text{ with } s(\mu)=v_i.\]

This is an atomic representation on $\H=\overline{\spn}\{\delta_\mu: \mu\in E^*\}$ with wandering space $\W=\{\delta_{v_0}, \delta_{v_1}\}$. Pick the orbit $\Omega_{e_0}=\{e_0, e_1\}$. In this example, we have $q=m=2$ and $M=2|_{e_0}=4$. The space $\widehat\H$ from Proposition~\ref{prop.unitary.pure} is defined as $\widehat\H=\H\otimes \mathbb{C}^4$ with orthonormal basis $\{\delta_\mu\otimes w_j: \mu\in E^*, 0\leq j\leq 3\}$. The pure shift TCK $\DirectedGraph$-family $\widehat\S$ is essentially $\S\otimes I$ on $\widehat{\H}$. The map $\widehat{U}_0$ is defined on the wandering space $\widehat\W=\spn\{\delta_{v_i}\otimes w_j: 0\leq i\leq 1, 0\leq j\leq 3\}$, given by the formula:
\begin{align*}
    \widehat{U}_0 \delta_{v_i}\otimes w_j &= \begin{cases}
        (U\delta_{v_i}) \otimes w_{j+1}, & \text{ if } 0\leq j\leq 2, \\
        (U^*\delta_{v_i}) \otimes w_0, & \text{ if } j=3, \\
    \end{cases} \\
    &=
    \begin{cases}
        \delta_{v_1} \otimes w_{j+1}, & \text{ if } 0\leq j\leq 2, i=0, \\
        \lambda \delta_{v_0} \otimes w_{j+1}, & \text{ if } 0\leq j\leq 2, i=1, \\
        \delta_{v_{1}} \otimes w_0, & \text{ if } j=3, i=0, \\
        \overline{\lambda}\delta_{v_{0}} \otimes w_0, & \text{ if } j=3, i=1. \\
    \end{cases} \\
\end{align*}
This is extended to a unique unitary on $\widehat{\H}$ such that $(\U,\S)$ is a unitary + pure shift Toeplitz representation of $(\bN, \DirectedGraph)$. 

The map $J$ that embeds in $\W$ inside $\widehat\H$ can be described by
\begin{align*}
    J\delta_{v_0} &= S_{e_0} \delta_{v_0} \otimes w_0, 
 = \delta_{e_0} \otimes w_0 \\
    J\delta_{v_1} &= S_{e_1} U^2 \delta_{v_1} \otimes w_3
= \lambda \delta_{e_1} \otimes w_3.
\end{align*}
It is easy to see that $J$ maps an orthonormal basis of $\W$ to an orthonormal set in $\widehat\H$ and thus it is an isometry. The map $\widehat{J}$ that embeds $\H$ inside $\widehat\H$ is defined by
\begin{align*}
    \widehat{J}\delta_\mu &= \widehat{S}_{\mu} J \delta_{s(\mu)} \otimes w_0 
    = \begin{cases}
       \delta_{\mu e_0} \otimes w_0, & \text{ if } s(\mu)=v_0, \\
       \lambda \delta_{\mu e_1} \otimes w_3, & \text{ if } s(\mu)=v_1.
    \end{cases}
\end{align*}

To see $\widehat{J}^* (\widehat\U,\widehat\S)\widehat{J}$ is indeed $(\U,\S)$, it suffices to verify that they agree on the wandering space $\W$. For this, we can compute
\begin{align*}
     \widehat{U} \widehat{J} \delta_{v_0} 
     &= \widehat{U} \widehat{S}_{e_0} \delta_{v_0}\otimes w_0 
     = \widehat{S}_{e_1} \widehat{U}_0^3 \delta_{v_0}\otimes w_0
     = \lambda \delta_{e_1} \otimes w_3 = \widehat{J} U\delta_{v_0},\\
     \widehat{U} \widehat{J} \delta_{v_1} 
     &= \widehat{U} \lambda \widehat{S}_{e_1} \delta_{v_1} 1\otimes w_3 
        = \widehat{S}_{e_0} \widehat{U}_0 \lambda \delta_{v_1}\otimes w_3 
        = \delta_{e_0} \otimes w_0 = \widehat{J} U\delta_{v_0}.
\end{align*}

We would like to point out that each orbit of edges produces a potentially different dilation. If we pick the orbit $\Omega_{f_0}$ instead, we can build another non-trivial dilation on $\H\otimes \mathbb{C}^6$. 
\end{eg}

We can now prove Theorem~\ref{thm.main.envelope}. 

\begin{proof}[\rm \textbf{Proof of Theorem~\ref{thm.main.envelope}}] As a result of Corollary~\ref{cor.shift.not.max} and Proposition~\ref{prop.unitary.pure},  we have now proven that among four cases in the Wold-decomposition Theorem~\ref{thm.Wold.main}, only the unitary + CK type representation is maximal. Therefore, the C*-envelope of $\A_{\mathbb{N},E}$ is the universal C*-algebra generated by unitary + CK type representation. Moreover, a unitary representation $\U$ of $\mathbb{N}$ coincides with a unitary representation of $\mathbb{Z}$. Hence, the universal C*-algebra generated by unitary + CK type representations is precisely the self-similar graph C*-algebra 
$\O_{\mathbb{Z},E}$.
\end{proof}

\end{document}